\theoremstyle{plain} 
\newtheorem{theo}{Theorem}[section] 
\newtheorem{lem}[theo]{Lemma}
\newtheorem{cor}[theo]{Corollary}
\newtheorem{prop}[theo]{Proposition}
\newtheorem{notation}[theo]{Notation}
\newtheorem{rem}[theo]{Remark}
\theoremstyle{definition} 
\newtheorem{defn}[theo]{Definition} 
\newtheorem{conj}{Conjecture}
\theoremstyle{remark} 
\newtheorem{ex}{Example}
\newtheorem{fact}{Fact}
\newcommand{\CC}{{\mathbb{C}}}
\newcommand{\NN}{{\mathbb{N}}}
\newcommand{\QQ}{{\mathbb{Q}}}
\newcommand{\ZZ}{{\mathbb{Z}}}
\newcommand{\calG}{{\mathcal{G}}}
\newcommand{\calH}{{\mathcal{H}}}
\newcommand{\calO}{{\mathcal{O}}}
\newcommand{\calX}{{\mathcal{X}}}
\newcommand{\comp}{{\circ}}
\newcommand{\cont}{{\subset}}
\newcommand{\contneq}{{\subsetneq}}
\newcommand{\ncontneq}{\nsubseteq}
\newcommand{\ord}{{\mathrm{ord}}}
\begin{document}
\title[Generalized Nash problem and adjacencies]{On the generalized Nash problem 
for smooth germs  and adjacencies of curve singularities}
\author{Javier Fern\'andez de Bobadilla}
\address{Javier Fern\'andez de Bobadilla:  
(1) IKERBASQUE, Basque Foundation for Science, Maria Diaz de Haro 3, 48013, 
    Bilbao, Bizkaia, Spain
(2) BCAM  Basque Center for Applied Mathematics, Mazarredo 14, E48009 Bilbao, 
Basque Country, Spain } 
\email{javierbobadilla73@gmail.com}
\author{Mar\'ia Pe Pereira}
\address{Mar\'ia Pe Pereira:  Instituto de Ciencias Matem\'aticas (ICMAT),  Nicol\'as Cabrera, 13-15, 
Campus de Cantoblanco, UAM, 28049
Madrid, Spain.}
\email{maria.pe@icmat.es}
\author{Patrick Popescu-Pampu}
\address{Patrick Popescu-Pampu: Universit{\'e} Lille 1, UFR de Maths., B\^atiment M2, 
     Cit\'e Scientifique, 59655, Villeneuve d'Ascq Cedex, France.}
\email{patrick.popescu@math.univ-lille1.fr}
\date{15-09-2017}
\subjclass[2000]{32S45 (primary), 14B05}
\keywords{Adjacency of singularities, Approximate roots, Arc spaces, 
   Maximal divisorial set, Nash problem, Plane curve singularities}

\begin{abstract}
       In this paper we explore the generalized Nash problem for arcs on a germ 
        of smooth surface: given two prime divisors above its special point, to determine 
        whether the arc space of one of them is included in the arc space of the other one. 
        We prove that this problem is combinatorial and we explore its relation with 
        several notions of adjacency of plane curve singularities.
\end{abstract}

\maketitle

\begin{center}
    {\bf This paper appeared in Advances in Mathematics 320 (2017) 1269-1317.
       http://dx.doi.org/10.1016/j.aim.2017.09.035}
\end{center}

\tableofcontents

\section{Introduction}

The original motivation, and one of the main purposes of this paper, is to study 
the \emph{generalized Nash problem} for smooth surface germs.
However, working on this problem we have obtained results in the study 
of adjacencies of plane curve singularities and 
in the study of divisorial valuations in the plane, as we will explain below.

Let $(X,O)$ be a germ of a normal surface singularity. A \emph{model} 
of $X$ is a proper birational 
map $\pi:  S \to X$ for which $S$ is smooth. A \emph{prime divisor over $O$} 
is an irreducible divisor $E$ in a model $S$ such that  $\pi(E)=O$. 
If $h$ is a function in the local ring $\mathcal{O}$ of germs of holomorphic 
functions on $(X,O)$, consider its order of vanishing $\nu_E(h)$ along $E$. 
The function $\nu_E : \mathcal{O} \to \NN \cup \{\infty\}$ is a discrete 
 valuation of rank one.  We call it the \emph{valuation associated with $E$}. 
Two prime divisors over $O$ are \emph{equivalent} if they induce the same valuation. 
A \emph{divisor over} $O$ is a formal finite combination 
$\sum_ia_iE_i$ for $a_i\in \NN$ and $E_i$ prime over $O$. 
We define $\nu_E:=\sum_ia_i\nu_{E_i}$, which is a 
function $\nu_E:\calO\to\ZZ$, but in general no longer a valuation. 

The \emph{maximal divisorial set} $\overline{N}_E$ associated with a prime divisor $E$ 
over $O$ is the Zariski closure in the space of arcs of $(X,O)$ 
of the set of arcs whose liftings to $S$ hit $E$.
It is well known that this definition depends only on the equivalence class of $E$. 
A \emph{Nash adjacency} is an inclusion $\overline{N}_F\cont \overline{N}_E$.  

Given a normal surface singularity $(X,O)$ and given two different prime divisors $E, F$ 
 appearing on its minimal resolution, Nash conjectured in \cite{Nash}
that {\em none of the spaces $\overline{N}_E, \overline{N}_F$ is included in the other one}. 
This conjecture, which was proved in \cite{Nash_surfaces}, can be 
generalized in the form of the following problem which, as far as we know, 
was stated first by Ishii \cite[3.10]{I_max} 
(here $X$ may denote an irreducible germ of arbitrary dimension): 

\medskip
\emph{Let $E$ and $F$ be prime divisors over $O\in X$. Characterize when 
    $\overline{N}_F \subset \overline{N}_E$.}
\medskip

This problem is wide open even for \emph{smooth} germs of surfaces $(X,O)$. 
This is the reason why we concentrate here on this case.  We will assume 
without loss of generality that our germ is $(\CC^2,0)$.

The \emph{combinatorial type} of a pair of divisors is the combinatorial type 
of the minimal sequence of blowing ups needed for 
making both of them appear. One of the main results in this paper is 
(see Theorem \ref{theo:Nash_top}): 

\begin{theo}\label{theo:main2}
  Let $E$ and $F$ be two prime divisors over the origin of $\CC^2$. The Nash-adjacency 
    $\overline{N}_E \supset \overline{N}_F$ only depends on the combinatorial 
    type of the pair $(E,F)$.
\end{theo}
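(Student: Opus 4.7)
The plan is to realize pairs $(E, F)$ of a given combinatorial type $G$ as points of an irreducible parameter space $\calM_G$, and then to show that the Nash-adjacency relation is constant along $\calM_G$. The combinatorial type $G$ is encoded as a decorated rooted tree describing the minimal blowup tree producing both $E$ and $F$. To build a pair of type $G$ one performs the prescribed sequence of point blowups: at each step, $G$ dictates which of the already-created exceptional components carries the next center, while the actual position of that center on the corresponding $\PP^1$ may be any point outside the finitely many ``forbidden'' positions (intersections with older exceptional components or already-chosen centers). This exhibits $\calM_G$ as an iterated fibration with fibers that are nonempty Zariski open subsets of $\AAA^1$, so $\calM_G$ is smooth and irreducible, and any two pairs of type $G$ can be joined by an irreducible curve in $\calM_G$.

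Over $\calM_G$ one next builds a relative model $\Pi_G : \calX_G \to \CC^2 \times \calM_G$ whose fiber over $t \in \calM_G$ is the minimal model for $(E_t, F_t)$, together with the relative exceptional divisors $\mathcal{E}, \mathcal{F} \subset \calX_G$ restricting fiberwise to $E_t$ and $F_t$. In the relative arc space of $\CC^2$ over $\calM_G$, introduce the relative Nash sets $\overline{\calN}_{\mathcal{E}}$ and $\overline{\calN}_{\mathcal{F}}$ as the closures of the loci of arcs whose lifts through $\Pi_G$ meet the corresponding relative divisor. They are closed and dominant over $\calM_G$, with fibers exactly $\overline{N}_{E_t}$ and $\overline{N}_{F_t}$, and each is irreducible by the surface case of Nash's conjecture applied fiberwise together with the irreducibility of $\calM_G$.

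It then remains to show that the locus $U := \{ t \in \calM_G : \overline{N}_{F_t} \subset \overline{N}_{E_t}\}$ equals $\calM_G$ whenever it is nonempty. Closedness of $U$ follows from the closedness of the relative Nash sets: if $\overline{N}_{F_{t_n}} \subset \overline{N}_{E_{t_n}}$ and $t_n \to t_0$, every arc in $\overline{N}_{F_{t_0}}$ is a limit of arcs in $\bigcup_n \overline{N}_{E_{t_n}} \subset \overline{\calN}_{\mathcal{E}}$, hence lies in $\overline{N}_{E_{t_0}}$. The delicate part is openness. The plan is to invoke the wedge characterization of Nash-adjacency (Lejeune-Jalabert, Reguera, and later elaborations by the first two authors): the inclusion $\overline{N}_{F_{t_0}} \subset \overline{N}_{E_{t_0}}$ is equivalent to the existence of a wedge $\alpha_0 : \Spec \CC[[s, u]] \to \CC^2$ whose generic arc is the generic arc associated with $E_{t_0}$ and whose special arc lies in $N_{F_{t_0}}$. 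The goal is then to deform $\alpha_0$ along short arcs in $\calM_G$ and thereby obtain wedges witnessing the adjacency for all nearby $t$; the combinatorial rigidity of the type $G$ should force the deformation of the blowup centers to be compatible with the lift of the wedge, with the existence of the deformed wedge produced by an Artin-approximation argument applied to the defining equations. Combined with closedness and the connectedness of $\calM_G$, this openness forces $U = \calM_G$. The hard step is precisely this wedge-deformation argument: controlling how the wedge transforms under a simultaneous perturbation of every blowup center while preserving the combinatorics of the pair.
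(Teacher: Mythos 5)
Your strategy is genuinely different from the paper's, but it has substantial gaps precisely at the points that carry the mathematical content.

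The paper does not argue via a moduli space and an open-closed argument. Instead, given a wedge $\alpha$ realizing the Nash-adjacency for $(E,F)$, it directly \emph{transports} $\alpha$ to a wedge $\beta$ for $(E',F')$ with the same combinatorial type. The mechanism is as follows: by \cite[Prop. 22]{Bob} the wedge is the germ of a proper morphism $\Phi:\Sigma\to\CC^2$; pulling this back to the minimal model $S_{E,F}$ and taking the Stein factorization produces a finite branched cover $\Psi:W\to\widetilde{U}\subset S_{E,F}$; one then chooses a diffeomorphism $\tau:\widetilde{U}\to\widetilde{U}'\subset S_{E',F'}$ (holomorphic near the singular points of the exceptional divisor, near the branching locus, and near the special arc) sending $E$ to $E'$ and $F$ to $F'$; and the Grauert--Remmert theorem endows $W$ with a new normal complex structure making $\tau\circ\Psi$ holomorphic. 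Reassembling, one obtains the wedge $\beta$. This is a single transport step, not a continuity argument.

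Your proposal has two gaps. First, the closedness of the locus $U$ is not as routine as you claim. The relevant question is whether the fiber over $t_0$ of the closure $\overline{\calN}_{\mathcal E}$ equals $\overline{N}_{E_{t_0}}$, i.e.\ that it is no bigger. Fibers of closures of families in arc spaces can jump, and showing they do not is a flatness-type statement that you would have to prove; the sequential limit argument you sketch only gives that the fiber \emph{contains} a limit set, and the inclusion you need goes the other way. Similarly, the assertion that every arc in $\overline{N}_{F_{t_0}}$ is a limit of arcs in $\overline{N}_{F_{t_n}}$ is a lower semicontinuity claim that would itself require an argument (it is plausible for $\dot{N}_{F_t}$ but needs justification at the level of closures). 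Second, and more seriously, the openness step is precisely where all the content of the theorem resides, and your write-up states it as a plan rather than a proof. ``Deform the wedge by Artin approximation'' does not obviously work: the existence of a wedge realizing a Nash-adjacency is not formulated as a system of equations over a Henselian local ring to which Artin approximation directly applies, and it is unclear what functor you would approximate. The reason the paper does not need to solve a deformation problem at all is that it reduces to transporting a finite branched cover along a diffeomorphism, where the hard analytic input is supplied by Grauert--Remmert. Without an analogue of that input, the openness step in your scheme is unproved, and the argument does not close.

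If you want to pursue the moduli-space approach, the honest route would be to prove the openness by adapting the paper's transport mechanism to an infinitesimal deformation of the pair $(E,F)$ within $\calM_G$; but at that point you have essentially reproved the paper's theorem and the moduli space adds little. The approach is an interesting reorganization but, as written, not a complete proof.
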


The proof involves techniques that fall outside Algebraic Geometry,
and it is based on the proof of a result of the first author from \cite{Bob}. 
However, there are a few improvements 
and modifications which motivated us to include a detailed version here.

This theorem has a few surprising consequences that we now summarize.

If we have a Nash adjacency $\overline{N}_E \supsetneq \overline{N}_F$, 
then $codim(\overline{N}_E)< codim(\overline{N}_F)$. As a corollary of the
last theorem we have the following: if the Nash-adjacency 
$\overline{N}_E \supset \overline{N}_F$ is true, then 
$\cap_t \overline{N}_{E_t} \supseteq \cup_s \overline{N}_{F_s}$ for certain infinite families of
divisors $E_t$ and $F_s$ such that the pairs $(E_t,F_s)$ have the same combinatorial 
type as $(E,F)$ for any value of the parameters $(t,s)$. This imediately improves the codimension bound in an unexpected 
way: as a special case of a general theorem of Ein, Lazarsfeld and Musta\c{t}\u{a} \cite{ELM}, 
we know that given a prime divisor $E$, 
the codimension $codim(\overline{N}_E)$ of $\overline{N}_E$ in the arc space of $\CC^2$ 
equals its \emph{log discrepancy} $a_E(\CC^2)$ 
(its coefficient in the relative canonical divisor 
$+1$). Using the previous corollary, we improve in most cases the existing bound $a_E(\CC^2)<a_F(\CC^2)$ for the log discrepancy. Namely, we show that 
if $E$ is Nash-adjacent to $F$, then: 

\begin{equation}
    \label{local_intro}
         a_E(\CC^2)<a_F(\CC^2)-b,
\end{equation}
where $b$ depends on the combinatorial type of the pair $(E, F)$
and in which often $b>0$ (see Corollary~\ref{cor:codim}).

We also state some conjectures in Section \ref{sec:cyl}, relating the generalized 
Nash problem stated above to the existence of inclusions between maximal divisorial  
sets in the space of arcs associated with a valuation $q\cdot \nu_E$, as defined in \cite{ELM} 
or \cite{I_max}. We are convinced that the solution of these conjectures is crucial for the final solution of the
generalised Nash problem.

A basic fact, proved by Pl\'enat \cite{Cam} as a generalization of Reguera's 
 \cite[Thm. 1.10]{Reg1}, which concerned only the case of rational surface singularities, 
 is that one has the implication: 
     $$\overline{N}_F \subset \overline{N}_E\Longrightarrow \nu_E \leq \nu_F.$$
  This implication, which we call the \emph{valuative criterion} for Nash adjacency,  
  has been used as a main tool for the investigation of Nash's Conjecture until the works 
  \cite{Bob}, \cite{tesis} and \cite{Nash_surfaces}.  It has been the essential ingredient in the 
  resolution of some cases (see \cite{SandMR}, \cite{CamPP} and \cite{CamPPP}). 
  However,  Ishii~\cite{I_max} showed with an 
  example that even in the smooth surface case the converse implication is not true. 
  The second set of results of this paper (which for expository reasons appear in the course 
  of the paper before the results mentioned previously in this introduction) 
  concerns the investigation of the geometric meaning of the inequality $\nu_E \leq \nu_F$. 
  For this purpose there is no
  need to restrict to prime divisors, although for the case of prime divisors we often get
  stronger results.

In Proposition \ref{prop:val} of Section \ref{sec:defirst}, we prove a characterization of the 
inequality $\nu_E\leq \nu_F$ in terms of finitely many inequalities $\nu_E(h)\leq \nu_F(h)$ 
with  functions $h$ only depending on $E$.
When $E$ and $F$ are prime divisors, we see in Section \ref{sec:speprime} 
that this is equivalent to an even more 
reduced number of inequalities, with $h$ varying only among the approximate roots 
of a function related to $E$. 

Using the previous characterization, we clarify the geometric meaning of the inequality 
$\nu_E \leq \nu_F$ in the 
following way. Let $F=\sum_ib_iD_i$ be a divisor in a model $S$ of 
$(\CC^2,0)$ with exceptional locus $Exc_S=\bigcup_iD_i$. 
We say that a function $f$ is \emph{weakly associated with $F$ in $S$} 
if $b_i=\widetilde{V(f)}\cdot D_i$ where $\widetilde{V(f)}$ is the strict transform in $S$ of $V(f)$, 
the zero set of $f$. 
Furthermore, we say that $f$ is \emph{associated} to $F$ if $\widetilde{V(f)}$ is a disjoint union of 
smooth irreducible curve germs and if for every $i$, $b_i$ of them are transversal to the 
corresponding $D_i$, at distinct smooth points of $Exc_S$.  

Then, our result is the following (see Theorem \ref{theo:val_def}):
	
\begin{theo} \label{theo:main1}
Let $E$ and $F$ be divisors over the origin of  $\CC^2$ and let $S$ 
 be the minimal model containing the divisor $E+F$. The following are equivalent: 
 \begin{enumerate}[(a)]
			\item  $\nu_E\leq \nu_F$.
			\item there exists a deformation $G=(g_s)_s$ with $g_0$ 
			    weakly associated with $F$ in $S$ and $g_s$ weakly associated 
			    with $E$ in $S$, for $s\neq 0$ small enough.
			\item there exists a deformation $G=(g_s)_s$ with $g_0$ associated 
			     with $F$ in $S$ and $g_s$ associated with $E$ in $S$, 
			      for $s\neq 0$ small enough.  
                            \item there exists a linear deformation $(g_s = g_0+sg)_s$ with $g_0$ 
                                 associated with $F$ in $S$ and $g$ and $g_s$ associated with $E$ 
                                 in $S$, for $s\neq 0$ small enough.  
\end{enumerate}
\end{theo}
		
The equivalence $(c)\Leftrightarrow (d)$ was proved in different language 
by Alberich and Roe in \cite{AR}. See also 
Theorem~\ref{theo:val_def_prim} for an improvement when $F$ is a prime divisor. 

Let $E=\sum_ia_iE_i$ and $F=\sum_jb_jF_j$ be divisors in a 
model $S\to \CC^2$. Let $(f_s)_s$ be a curve deformation such that 
$f_0$ is associated with $F$ in $S$ and $f_{s \neq 0}$ is associated with $E$ in $S$. 
For $s\neq 0$ small enough, the germ $V(f_s)$ 
is a curve singularity with constant topological type. The topological type only jumps 
for $s=0$. Following Arnold's classical terminology, we say that
the deformation $(f_s)_s$ 
exhibits an \emph{adjacency} from the topological type of $V(f_{s \neq 0})$ 
to the topological type of $V(f_0)$ 
(note that in this paper, when we speak about the topological type of 
a curve singularity, we mean its embedded topological type).

As the reader may expect, not all of Arnold's adjacencies of topological types of plane curve singularities can be 
realized by curve deformations as above. Let us explain why. 
		
In a deformation $(f_s)_s$ giving rise to an Arnold adjacency, the positions of the 
free infinitely near points of the plane curve singularity $V(f_{s\neq 0})$ move with 
the parameter $s$ (see Section \ref{sec:not}). By contrast, 
the special property (b) above implies  
that the free points do not move with $s$. We say that such deformations  
\emph{fix the free points}. They correspond to the deformations of Alberich and 
Roe's paper ~\cite{AR} whose general member is associated with a cluster. 
		
In Section \ref{sec:def_fix}, we use Theorem~\ref{theo:main1} and its 
corollaries to show that most of 
the known Arnold adjacencies (see \cite{Arn}, \cite{Bri}, \cite{Saito} and \cite{BBB}) can be 
realized by deformations fixing the free points. 
We provide a complete list of adjacencies fixing the free points between all simple, unimodal and bimodal singularities 
with Milnor number up to $16$, and compare it with the list of usual Arnold adjacencies. This
improves the study carried in~\cite{AR} and allows to recover in a fast way many 
adjacencies from \cite{Arn}, \cite{Bri}, \cite{Saito}.

Using these results, in Remark~\ref{rem:alg} we sketch an algorithm 
which, starting from a topological type, gives back many topological types 
Arnold adjacent to the given one.
We expect that this algorithm produces a good portion of all the topological 
types Arnold adjacent to the given one.

In Section \ref{resdelta}, using the technique developed in this paper 
and a result of Ishii \cite{I_max} 
for the case of deformations whose general
member has only one Puiseux pair,  we obtain 
information about $\delta$-constant deformations: 

\begin{prop}
Let $E$ be a prime divisor which is the last divisor appearing in the minimal embedded 
resolution of an irreducible plane curve singularity with one Puiseux pair. 
Let $F$ be any other prime divisor. The following properties are equivalent:
\begin{enumerate}
\item there is a $\delta$-constant deformation $(f_s)_s$ such that $f_0$ is associated 
     with $F$ and $f_{s\neq 0}$ is associated with $E$.
\item the inequality $\nu_E\leq \nu_F$ holds (and hence all the equivalent formulations 
     predicted in Theorem \ref{theo:main1}).
\end{enumerate}
  If the previous properties are satisfied, then there exists a $\delta$-constant 
     deformation whose generic topological type is given 
      by a curve whose minimal embedded resolution hits $E$ and whose special curve 
      lifts transversely to $F$.
\end{prop}

As far as we know, this is the first concrete criterion allowing to find topological types 
in the $\delta$-constant stratum of a given plane curve singularity.

\medskip
We are now in a position to return to the case in which $E$ and $F$ are prime and compare the geometric meanings of the 
inequality $\nu_E\leq\nu_F$ and of the inclusion $N_F\subseteq N_E$.

Using results of Reguera \cite{Reg3} 
and of the first author \cite{Bob}, the generalized Nash problem can be reformulated as follows 
in the case of normal surfaces: 

\medskip
\emph{
Given $E$ and $F$ as before, does there exist a \emph{wedge} 
(a germ of a uniparametric family $(\alpha_s)_{s \in (\CC,0)}$ of arcs) 
such that $\alpha_0\in \overline{N}_F$ and $\alpha_{s\neq 0} \in \overline{N}_E$ 
and furthermore the liftings of $\alpha_0$ and $\alpha_{s\neq 0}$ to the minimal 
models $S_F$ and $S_E$ of $F$ and 
$E$ respectively are transversal to the corresponding exceptional divisors?}
\medskip
  
In the case of $X=\CC^2$, with coordinates $(x,y)$, the image in $\CC^2_{x,y}\times \CC_s$  of a 
wedge $\alpha:\CC_t\times \CC_s  \to \CC^2_{x,y}$ 
given by $(\alpha(t,s),s)$ has an implicit equation $f(s,x,y)=0$ 
which induces a deformation of a plane curve singularity (define 
$f_s(x,y):=f(s,x,y)$). 
It is clear that $f_0$ is 
associated with $F$ and that $f_{s \neq 0}$ is associated with $E$. 
Since the deformation admits a parametrization in family
(given by $\alpha$), it satisfies the additional property of being a $\delta$-constant deformation. 
In fact, due to a classical theorem of Teissier (see Theorem~\ref{theo:Teissier_delta}), 
the converse is true: if there is a $\delta$-constant
deformation $f_s$ such that $f_0$ is associated with $F$ and that 
$f_{s \neq 0}$ is associated 
with $E$, then there is a uniparametric 
family $(\alpha_s)_s$ as above, and by results of the first author~\cite{Bob} 
we have the inclusion $\overline{N}_F\subseteq \overline{N}_E$.

Thus we find that the geometric difference between the inequality  $\nu_E\leq\nu_F$ and the inclusion $N_F\subseteq N_E$
is the $\delta$-constancy of the associated deformations. More precisely:

\begin{prop}
Let $E$, $F$ be prime divisors over $O$ such that the inequality  $ \nu_E\leq\nu_F$ holds. 
The inclusion $N_F\subseteq N_E$
is satisfied if and only if there exists a \emph{$\delta$-constant} deformation $G=(g_s)_s$ with $g_0$ associated 
with $F$ in $S$ and $g_s$ associated with $E$ in $S$, for $s\neq 0$ small enough.
\end{prop}

In Section \ref{sec:ex} we show an example where condition $\nu_E\leq\nu_F$  
holds but the inclusion $\overline{N}_E \supset \overline{N}_F$ does not. 
Ishii gave already such an example in~\cite{I_max}. 
But in the case of her example, $E$ is a free divisor and one can disprove the inclusion 
$\overline{N}_E \supset \overline{N}_F$ by the 
log discrepancy criterion mentioned above.
By contrast, our example concerns two satellite divisors and the 
inclusion $\overline{N}_E \supset \overline{N}_F$
cannot be discarded even by the enhanced log discrepancy criterion explained above in this introduction.

 \medskip
{\bf Acknowledgements:}
We thank D. Van Straten and J. Stevens for having pointed out reference \cite{BBB}. 
We thank S. Gusein-Zade and 
V. Goryunov for a discussion on classical adjacencies.  
The second author wants to thank Felix Delgado for very useful conversations and 
the third author thanks Camille Pl\'enat for her remarks on a previous version of the paper. 
We also thank the referee for his careful revision and useful suggestions.

The first author is partially supported by IAS and by ERCEA 615655 NMST Consolidator Grant, MINECO by the project reference MTM2013-45710-C2-2-P, by the Basque Government through the BERC 2014-2017 program, by Spanish Ministry of Economy and Competitiveness MINECO: BCAM Severo Ochoa excellence accreditation SEV-2013-0323 and by Bolsa Pesquisador Visitante Especial (PVE) - Ciencias sem Fronteiras/CNPq Project number:  401947/2013-0. 

	The first and second author are partially supported by the ERC Consolidator Grant NMST,  
	by Spanish MICINN project MTM2013-45710-C2-2-P and by the project SEV-2011-0087 Apoyo a Centros y Unidades de Excelencia Severo Ochoa. The first author is also partially supported by a CNPq project, Science without borders, Project nº 401947/2013-0. 
	The second one was also supported by IMPA (Rio de Janeiro).
 The second and third author were partially supported 
   by Labex CEMPI (ANR-11-LABX-0007-01). 
	The third author was also partially 
   supported by the grant ANR-12-JS01-0002-01 SUSI.

\section{Background and terminology}\label{sec:not}

In this section we introduce terminology about divisors, curves,  
infinitely near points and combinatorial types of divisors to be used throughout the paper. 
The reader can find further details in \cite{Wall}, \cite{dJPf} or \cite{Casas}.
\medskip

We work with $(\CC^2,0)$, seen as a germ of complex analytic surface, 
with local $\CC$-algebra $\calO$, whose maximal ideal is denoted 
$\frak{m}$. If $h \in \frak{m}$, we denote by $V(h)$ its vanishing 
locus, seen as a germ of effective divisor on $\CC^2$. If $h$ is 
irreducible, then $V(h)$ is called a \emph{ branch}. In general 
we say simply that $V(h)$ is a \emph{curve singularity}, or even a \emph{curve}, 
if no confusion is possible with a global curve.  

We denote by $m_0(h) = m_0(V(h))$ the \emph{multiplicity} at $0$ of $h$ or of the curve 
$V(h)$ it defines. 

If $C$ and $D$ are two curves, 
their \emph{intersection number} at a point $p$ (in general, $p$ will be an infinitely 
near point of $0$, as defined below) is denoted by:
   \[ I_p(C,D) \in \NN \cup \{+ \infty\}. \]
One gets $+ \infty$ if and only if $C$ and $D$ share irreducible components.

The generalized Nash problem concerns prime divisors above $0$. 
But, since the valuative order and some results concerning it in Section \ref{sec:val} 
can be extended to divisors which are not necessarily prime, we introduce our terminology 
in that generality.  

A \emph{model} is any smooth surface $S$ obtained from $(\CC^2,0)$ by a finite sequence of 
blow-ups of points.  If $\pi : S \to \CC^2$ is the associated morphism, we denote by 
$Exc_S := \pi^{-1}(0)$ the corresponding \emph{ exceptional divisor}. 
If $S, T$ are two models and $\pi_S$ and $\pi_T$  are the associated morphisms, 
we say that $T$ \emph{dominates} $S$ if 
there exists a morphism $\psi : T \to S$ such that $\pi_S \circ \psi = \pi_T$. 

Any point of the exceptional divisor of some model is called an \emph{infinitely near point of $0$}. 
Two such points in different models are considered to be equivalent if the canonical 
bimeromorphic mapping 
between the models which contain them restricts to a biholomorphism in their neighborhoods.
In the sequel, we will call such equivalent classes of infinitely near points simply \emph{points}.  

In a process of blowing up points starting from $0$, 
if a  divisor $E$ is created by blowing up a point $p$, we may also denote that divisor  by $E_p$ or that point by $p_E$. If a point $q$ lies on 
the strict transform of $E_p$ on some model, we say that $q$ \emph{is proximate to} $p$.  

Recall that in a blowing up process, an infinitely near point of $0$ 
can be either \emph{satellite}, if it is  
a singular point of the exceptional divisor of the model on which it lies, that is, 
if it is the intersection point of two components of this divisor, 
or \emph{free}, if it is a smooth point of it. We say also that a divisor $E$ is \emph{satellite/free} 
if $p_E$ is so. 

In the sequel, a \emph{prime divisor} (\emph{above $0$}, which we won't precise 
if there is no risk of confusion) will mean any irreducible component of the exceptional 
divisor of any model, two such divisors being \emph{equivalent} if there exists a third model 
on which their strict transforms coincide. In other words, they are equivalent if and only 
if they define the same divisorial valuation on the local ring $\calO$. 
We will make an abuse of language and talk about prime divisors also 
for equivalence classes of divisors according to this relation.

More generally, a  \emph{divisor}  $D$ will be an element of the free abelian 
semigroup generated by the prime divisors. That is, $D = \sum_{i=1}^n a_i D_i$, 
where $a_i \in \NN$ and $D_i$ is a prime divisor (that is, our ``divisors'' will 
be effective divisors in the usual terminology of algebraic geometry). 
Whenever $a_i \neq 0$, we say that $D_i$ is a \emph{component of $D$}.

Given a divisor $D$, we say that it \emph{appears} on a model $S$ if 
its components are equivalent to irreducible components of the exceptional 
divisor $Exc_S$ of $S$.  
Among all the models on which $D$ appears, there is a minimal one under the 
relation of domination of models. We call it the \emph{minimal model of $D$}.

  \begin{defn}  \label{def:domrel}
      Let $E$ and $F$ be two prime divisors. We say that $F$ \emph{dominates} $E$, 
    and we write $F \geq_d E$, if and only if the minimal model of $F$ 
    dominates the minimal model of $E$.
   \end{defn}

\begin{defn}\label{def:assoc}  
     Given a divisor $D=\sum_{i=1}^n a_i D_i$ with $D_i$ irreducible and 
     $a_i\geq 0$ that appears in a model $S$, then we say that $h\in\calO$ 
    \emph{ is associated with $D$ in} $S$ if: 
          $$h=\prod_{j}h^{(j)}$$ 
    with $h^{(j)}$ irreducible factors 
    and for every $i \in \{1, \dots , n\}$, we have that $a_i$ of them are functions defining pairwise 
    disjoint \emph{curvettas} of $D_i$ in $S$ (that is, branches whose strict transforms 
    on $S$ are smooth and hit $D_i$ transversely in different smooth points of $Exc_S$). 
      We say that $h$ and $V(h)$ are, respectively, a  
      \emph{function and a curve associated with $D$ in $S$}. We will denote a 
      function associated with $D$ by $h_D$.
\end{defn}

Recall that the final prime divisors of the minimal embedded resolutions of branches are 
satellite. Conversely, given any satellite prime divisor $D$, it is the final divisor of the 
minimal embedded resolution of any branch associated with it.

\emph{The combinatorial type of a (not necessarily satellite) prime divisor} $D$ 
may be encoded in the following 
equivalent ways (see more details in \cite[Chapter 5]{dJPf} and \cite[Section 3.6]{Wall}):
 \begin{enumerate}
          \item  by the dual graph, which we denote by $\calG_D$, of the exceptional 
              divisor of the minimal model of $D$, 
             each vertex being weighted  by the self-intersection of the corresponding 
             component in this minimal model; 
         
         \item by the character free/satellite of the finite set of points which have to be blown 
            up in order to reach the minimal 
             model of $D$, enriched with the information about the proximity binary relation; 
             this information can be represented for example by the Enriques diagram 
             (see \cite[Chapter 3]{Casas}); 
       
           \item the \emph{multiplicity sequence}. This sequence associates 
               to the height $k$ above $O$ of each infinitely near point $p_k$ which is blown up in order 
               to reach $D$, the multiplicity at $p_k$ of the strict transform of $V(h_D)$ for some    
               associated function $h_D$.
    \end{enumerate}
    
	The \emph{combinatorial type} of a not necessarily prime divisor $D$ 
can be encoded in analogous ways, by keeping also the coefficients in 
$D$ of each component of the minimal model of $D$. For example, in the dual graph $\calG_D$ 
we mark the vertices which 
correspond to the components  of $D$ and we add their coefficients in $D$ 
as a second decoration of those vertices of $\calG_D$. 

We write $D\equiv D'$ to say that $D$ and $D'$ have the same combinatorial type. 

Observe that a divisor $D$ is not determined by its combinatorial type, 
 even up to isomorphisms 
above $(\CC^2,0)$. The existence of free points in the resolution process 
(and the points to be blown up immediately 
after $0$ are always free points) gives moduli in the family of divisors with the same 
combinatorial type. In~\cite{BobTesis},~\cite{Roe} moduli spaces of divisors with the same combinatorial type 
are constructed.
   
Note that for a free prime divisor $E$, the embedded topology of a curve $V(h)$ 
associated with it (in some model) contains 
less information than the combinatorial type of $E$. Indeed, the combinatorial type 
of a prime divisor $E$ is codified by the minimal embedded resolution of any associated curve $V(h)$ and the
number of free points we have to blow up after resolving such a curve $V(h)$ in order to get the minimal model of $E$. 
That is, it is codified adding to the multiplicity sequence of the associated curve $V(h)$ 
as many $1$'s as free points we have to blow up after resolving it, in order to get $E$.

Given a pair of prime divisors $E$ and $F$, we define the \emph{contact order} $Cont(E,F)$ between 
them as the number of common points $p_i$ they share in the process of finding 
their minimal models. Said slightly differently, it is the number of blow ups that we perform 
before the strict transforms of associated curves $V(h_E)$ and $V(h_F)$ to $E$ and $F$ respectively in a model of $E+F$
separate. For instance, assume that 
both $E$ and $F$ are different from $E_0$. 
Then their contact order is $1$ if and only if the branches $V(h_E)$ and $V(h_F)$ 
have different tangent lines at $0$. 

Using this notion, we can say that the combinatorial type of a non-prime divisor is given by the combinatorial types of its prime components, their multiplicities in the 
divisors and the contact orders between them.

Given two divisors $E$ and $F$, we will talk about the 
\emph{combinatorial type} of the pair $(E, F)$ as the combinatorial type 
 of both $E$ and $F$ taken separately, to which we add the information of all the contact 
 orders between all their prime components. We write $(E,F)\equiv (E',F')$ 
 to say that these two pairs have the same combinatorial type. Equivalently, we can consider the dual graph of the exceptional divisor of the minimal model where all the components of $E$ and $F$  appear and mark differently the components of $E$ and $F$ and 
 their corresponding multiplicities.


\section{The valuative partial order}\label{sec:val}

In this section we introduce the \emph{valuative domination}  partial order 
$\leq_{\nu}$ on the set of non necessarily prime divisors above $O$ and explore 
several properties of  it. 

In the first subsection we show that the valuative inequality $E \leq_{\nu} F$ 
is equivalent to
a  finite number of inequalities 
$\nu_E(h)\leq\nu_F(h)$, for well-chosen functions $h$ depending only on $E$. 
In particular, we see in this way that the valuative domination between two divisors $E$ and $F$ 
only depends on the combinatorics of the pair $(E,F)$.

In Subsection \ref{sec:speprime} we prove that when $E$ and $F$ are \emph{prime}, 
then in order to show the valuative inequality $E \leq_{\nu} F$,  
it is enough to check those inequalities 
when $h$ varies among the approximate roots of a function 
which defines a curvetta of $E$.

In Subsection \ref{sec:def}  we prove a characterization of  the pairs of divisors $(E,F)$ such that 
$E \leq_{\nu} F$  in terms of the existence of certain types of deformations, which we call 
{\em deformations fixing the free points}  
(see Definition~\ref{def:def_fix} and Theorems \ref{theo:val_def} and \ref{theo:val_def_prim}). 
This recovers and generalizes in a different language results of Alberich and Roe~\cite{AR}. 

In Subsection \ref{sec:def_fix} we see that these deformations realize many classical 
adjacencies of topological types, testing the previous criterion  
on Arnold's list of adjacencies. In Remark \ref{rem:alg} we sketch an algorithm which, 
given the embedded topological type of a branch, finds all the Arnold 
adjacent topological types obtained by deformations fixing the free points.
\medskip


\subsection{Definition and first properties of the valuative partial order}   \label{sec:defirst}

$\: $
\medskip

Given a prime divisor $E$, we denote by $\nu_E$ the divisorial valuation associated with it. 
Let $\pi_S :  S\to \CC^2$ be a model in which $E$ appears. 
Given $h\in \calO$, the value $\nu_E(h)$ is the vanishing order along $E$ 
of the total transform $\pi_S^*(h)$ of $h$ on $S$. If the divisor $E$ is not prime, 
then it may be written  as a sum $\sum_i a_i E_i$ for some prime $E_i$'s and 
non-negative integers $a_i$'s.
We define then: 
       $$\nu_E:=\sum_ia_i\nu_{E_i}.$$     
Note that, whenever $E$ has at least two components, $\nu_E$ is no longer a valuation. 
 
The value $\nu_E(h)$ can be also computed as the intersection multiplicity 
of $V(h)$ with a curve $V(h_E)$ associated with $E$ in $S$ (see \ref{def:assoc}),
 whenever $h_E$ is chosen such that 
the strict transforms of $V(h_E)$ and $V(h)$ in the minimal model of $E$ do not meet. 
An immediate consequence of this fact is:

  \begin{lem}  \label{lem:exchange}
      For any two divisors $E$, $F$ above $0$, one has $\nu_E(h_F) = \nu_F(h_E)$ for $h_E$ 
       and $h_F$ generic associated functions to $E$ and $F$ in the minimal model of $E+F$. 
  \end{lem}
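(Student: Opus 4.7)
The plan is to compute both sides as the same intersection number $I_O(V(h_E), V(h_F))$, using the projection formula on the minimal model $S$ of $E+F$, and then appeal to the symmetry of the intersection pairing.

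First I would observe that the genericity in the choice of $h_E$ and $h_F$ can be used to ensure that the strict transforms $\widetilde{V(h_E)}$ and $\widetilde{V(h_F)}$ in $S$ are disjoint. Indeed, by Definition~\ref{def:assoc}, $\widetilde{V(h_E)}$ (resp. $\widetilde{V(h_F)}$) is a disjoint union of curvettas attached transversely to components of $E$ (resp.\ $F$) at smooth points of $Exc_S$; since these smooth points can be chosen arbitrarily within a nonempty Zariski-open subset of each component, one can select them so that no curvetta for $E$ meets a curvetta for $F$.

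The heart of the argument is then a standard projection-formula computation. Write $E=\sum_i a_i E_i$, with the $E_i$ running through the components of the exceptional divisor of $S$ that appear in $E$, and analogously for $F$. Using the total transform decomposition
\begin{equation*}
   \pi_S^{*}V(h_F) \;=\; \widetilde{V(h_F)} + \sum_{k} \nu_{E_k}(h_F)\, E_k,
\end{equation*}
(the sum being over \emph{all} exceptional components of $S$), I would intersect with $\widetilde{V(h_E)}$ on $S$. The projection formula gives $\widetilde{V(h_E)}\cdot \pi_S^{*}V(h_F) = I_O(V(h_E), V(h_F))$. The term $\widetilde{V(h_E)}\cdot \widetilde{V(h_F)}$ vanishes by the disjointness arranged in the previous step. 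Finally, since $\widetilde{V(h_E)}$ consists of $a_i$ curvettas of $E_i$ meeting only $E_i$, transversely, one gets $\widetilde{V(h_E)}\cdot E_k = a_k$ if $k$ is such that $E_k$ is a component of $E$, and $0$ otherwise. Putting everything together yields $I_O(V(h_E), V(h_F)) = \sum_i a_i \nu_{E_i}(h_F) = \nu_E(h_F)$.

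The exact same computation, with the roles of $E$ and $F$ interchanged, gives $I_O(V(h_F), V(h_E)) = \nu_F(h_E)$, and the symmetry of the intersection number on $\CC^2$ closes the proof. I do not expect a genuine obstacle here: the only subtle point is arranging the disjointness of strict transforms in $S$ (not merely in the separate minimal models of $E$ and $F$), and this is exactly why the statement requires working in the minimal model of $E+F$ and allowing generic associated functions.
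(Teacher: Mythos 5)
Your argument is correct and matches the paper's (implicit) proof: the paper states the lemma as an immediate consequence of the identity $\nu_E(h) = I_O(V(h_E), V(h))$ when the relevant strict transforms are disjoint, which is exactly what you establish via the total-transform decomposition and the projection formula, followed by the symmetry of $I_O$. You merely spell out the computation that the paper declares immediate, and you correctly identify that genericity in $S$ (the minimal model of $E+F$, not merely the separate minimal models of $E$ and $F$) is what lets one arrange disjointness of the strict transforms.
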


\begin{defn} \label{defn:valord}
     We define the \emph{valuative domination partial order} $\leq_\nu$ among divisors,   
     by saying that $E\leq_\nu F$ if and only if $\nu_E(h)\leq \nu_F(h)$ for all $h\in\calO$. 
     We also write $\nu_E\leq\nu_F$,  where $\nu_E$ and $\nu_F$ are 
     seen as functions defined on the set $\mathcal{O}$ with values in $\NN$. 
     We say then that \emph{ $F$ valuatively dominates $E$}  or that 
      \emph{ $E$ is valuatively dominated by $F$} and we call $E\leq_\nu F$ the 
      \emph{valuative  inequality}.
\end{defn}

\begin{rem}
\label{rem:dom-val}
Let $E$ and $F$ be two prime divisors. If $F$ dominates $E$ in the sense of 
Definition~\ref{def:domrel},  then $F$ valuatively dominates $E$.
\end{rem}

The only non-trivial axiom of partial order is the antisymmetry, which is verified  
in Lemma \ref{lem:antisym}.

Consider two curves $V(f)$ and $V(g)$. Let $\{x_i\}_{i\in I}$ be the infinitely near points 
which we blow up to obtain an embedded resolution of $V(fg)$.
Let $\tilde{f}_i$ and $\tilde{g}_i$ be the local equations of the strict transforms of 
$V(f)$ and $V(g)$ at $x_i$. 
Denote $m_i^f=m_{x_i}\tilde{f}_i$ and $m_i^g=m_{x_i}\tilde{g}_i$. Then, we have 
(see for example \cite{dJPf}):
\begin{equation}
\label{eq:mult}I_O(V(f),V(g))=\sum_{i\in I} m_i^f\cdot m_i^g.
\end{equation}

The following two lemmas are immediate consequences of this equality: 

\begin{lem}\label{lem:compar}
    Let $E$ be a divisor above $0$ and $h_E$ a function associated with $E$ 
    in the minimal model $S_E$ of $E$. 
    Then for any $h \in \mathfrak{m}$, one has:
        \[   \nu_E(h) \leq I_0(V(h_E), V(h)) \]
     with equality if and only if the strict transforms of $V(h_E)$ and 
     $V(h)$ do not meet in $S_E$. 
\end{lem}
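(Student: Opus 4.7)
The plan is to apply the projection formula on the minimal model $\pi\colon S_E\to\CC^2$ to the two divisors $V(h)$ and $V(h_E)$. Writing $E=\sum_i a_i E_i$, the very definition of the divisorial valuations $\nu_{E_k}$ gives the decomposition
\begin{equation*}
\pi^*V(h)=\widetilde{V(h)}+\sum_k \nu_{E_k}(h)\,E_k,
\end{equation*}
the sum running over all prime exceptional components of $S_E$. This reduces the computation of $I_O(V(h_E),V(h))$ to an intersection-theoretic calculation on the smooth surface $S_E$.

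Next I would read off the intersections $\widetilde{V(h_E)}\cdot E_k$ from Definition~\ref{def:assoc}: since $h_E$ is associated with $E$ in $S_E$, the strict transform $\widetilde{V(h_E)}$ is a disjoint union of smooth branches so that, for each $i$, exactly $a_i$ of them meet $E_i$ transversely at distinct smooth points of $Exc_{S_E}$ and none meets any other exceptional component; hence $\widetilde{V(h_E)}\cdot E_k=a_k$ when $E_k$ is a component of $E$, and $0$ otherwise. Combining both ingredients via the projection formula yields
\begin{equation*}
I_O(V(h_E),V(h))=V(h)\cdot\pi_*\widetilde{V(h_E)}=\pi^*V(h)\cdot\widetilde{V(h_E)}=\widetilde{V(h)}\cdot\widetilde{V(h_E)}+\sum_i a_i\,\nu_{E_i}(h),
\end{equation*}
which is the identity $I_O(V(h_E),V(h))=\nu_E(h)+\widetilde{V(h)}\cdot\widetilde{V(h_E)}$.

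Since $\widetilde{V(h)}$ and $\widetilde{V(h_E)}$ are effective and carry no exceptional component, their intersection product in $S_E$ is a non-negative integer that vanishes exactly when the two strict transforms share no point of $S_E$; this yields simultaneously the inequality and the equality criterion. The only loose end is the case in which $V(h)$ and $V(h_E)$ share an irreducible branch at $O$: both sides of the inequality are then infinite and the lemma is vacuous, so this case can be discarded at the outset. I do not foresee any real obstacle; the argument is essentially a repackaging of (\ref{eq:mult}) through the projection formula, which explains why the authors present the statement as an immediate consequence of that formula. An alternative, more hands-on route would expand $I_O(V(h_E),V(h))=\sum_i m_i^h m_i^{h_E}$ on an embedded resolution dominating $S_E$, split the sum according to whether the point $x_i$ is infinitely near to $O$ in the process yielding $S_E$ or is blown up afterwards, and recognise the two partial sums as $\nu_E(h)$ and $\widetilde{V(h)}\cdot\widetilde{V(h_E)}$ respectively.
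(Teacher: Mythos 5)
Your argument is correct. The paper gives no explicit proof of this lemma; it simply lists it as an ``immediate consequence'' of the multiplicity formula~(\ref{eq:mult}). Your main route, via the projection formula applied to the decomposition $\pi^*V(h)=\widetilde{V(h)}+\sum_k\nu_{E_k}(h)\,E_k$, is a genuine, clean proof: the identity
$I_O(V(h_E),V(h))=\nu_E(h)+\widetilde{V(h)}\cdot\widetilde{V(h_E)}$
holds and the excess term is nonnegative and vanishes exactly when the strict transforms are disjoint on $S_E$. Two minor remarks. First, the case where $V(h)$ and $V(h_E)$ share a branch is not quite ``vacuous'': $\nu_E(h)$ stays finite while the right-hand side is $+\infty$, so the inequality is strict and the strict transforms evidently do meet, in agreement with the equality criterion; one can say this in one line rather than discard the case. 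Second, your parenthetical ``alternative, more hands-on route'' via (\ref{eq:mult}) is closer to what the paper actually points to: expand $I_O(V(h_E),V(h))=\sum_i m_i^{h_E}m_i^{h}$ over a resolution dominating $S_E$, observe that the partial sum over the infinitely near points blown up to reach $S_E$ is precisely $\nu_E(h)$ (because the multiplicity sequence of $h_E$ coincides with the coefficients $a_i$ there and is $0$ afterwards for transversality reasons), and that the remaining partial sum computes $\widetilde{V(h)}\cdot\widetilde{V(h_E)}$ on $S_E$. The projection-formula version you give is the more conceptual packaging; the multiplicity-sequence version is the paper's intended one-liner. Both buy the same thing, and either would serve.
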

  
\begin{lem}\label{lem:nu_ind} 
        The value of $\nu_E(h)$ only depends on the 
			combinatorial type of the exceptional divisor of the minimal 
			model which both gives an
			embedded resolution of $V(h)$ and makes $E$ appear, endowed 
			with the marking of the divisor $E$ 
			and of the divisor met by the strict transform of $V(h)$ in this model.

\end{lem}
\begin{proof}
  The combinatorial type we refer to may be described by the following decorated 
        graph, associated to the minimal embedded resolution in which $E$ appears:  
        \begin{itemize}
             \item let $\calG(E,h)$ be the dual graph of the exceptional divisor; 
             \item weight each vertex by the self-intersection of the corresponding divisor; 
             \item if an irreducible component of the strict transform of $V(h)$ meets an 
                 irreducible component of the exceptional divisor, attach an arrow to the 
                 vertex which corresponds to the strict transform; 
             \item mark the vertices corresponding to the components of $E$ with the label ``$E$''.
        \end{itemize}

 Choose a function $h_E$ such that the strict transforms of $h_E$ and of $h$ 
 do not meet in the minimal resolution considered
 in the statement. Then by Lemma~\ref{lem:compar} we have the equality 
 $\nu_E(h)=I_0(V(h_E), V(h))$. By the usual formulas 
 controlling the behaviour of intersection multiplicity of plane branches under blow up, 
 it is clear that the combinatorics 
 of the decorated graph $\calG(E,h)$ determine $I_0(V(h_E), V(h))$. 
\end{proof}

\begin{lem}\label{lem:val_top} 
       If $(E,F)\equiv(E',F')$, that is, if the two pairs have the same combinatorial type, then: 
          \begin{equation}\label{eq:val_top}
                    E\leq_\nu F \ \Leftrightarrow\ E'\leq_{\nu}F'.
          \end{equation}
\end{lem}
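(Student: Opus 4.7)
The plan is to establish the equivalence by proving $E \leq_\nu F \Rightarrow E' \leq_\nu F'$ --- the converse then following by symmetry of the relation $\equiv$. I will argue by contrapositive: assuming $E' \not\leq_\nu F'$, I need to produce some $h \in \mathfrak{m}$ with $\nu_E(h) > \nu_F(h)$. The starting point is a function $h' \in \mathfrak{m}$ witnessing $\nu_{E'}(h') > \nu_{F'}(h')$, and the core of the argument will be to transport $h'$ across the combinatorial isomorphism of the two pairs.

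More concretely, I would fix a model above $\CC^2$ in which $V(h')$, $E'$ and $F'$ are simultaneously embedded-resolved, read off the combinatorial type of the triple $(E', F', V(h'))$ (i.e.\ its dual graph, proximity data and coefficients), and then replicate the very same sequence of point blow-ups starting from the minimal model of $E+F$. Because $(E,F) \equiv (E', F')$, the configurations of infinitely near points agree at every stage, and at each free point that has to be blown up I have a Zariski-open set of admissible positions; selecting these consistently --- using the moduli descriptions of divisors with a fixed combinatorial type from \cite{BobTesis,Roe} to see that the forbidden loci are proper --- allows me to pick a curve $V(h)$ above $(E,F)$ such that the triple $(E,F,V(h))$ has the same combinatorial type as $(E', F', V(h'))$. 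In particular $(E, V(h)) \equiv (E', V(h'))$ and $(F, V(h)) \equiv (F', V(h'))$.

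Applying Lemma \ref{lem:nu_ind} to each of these two marked pairs will then yield $\nu_E(h) = \nu_{E'}(h')$ and $\nu_F(h) = \nu_{F'}(h')$, and the strict inequality transfers to give $\nu_E(h) > \nu_F(h)$, the desired contradiction. The hard part will be the realization step in the middle paragraph: one must ensure that every combinatorial datum of a curve occurring on top of $(E', F')$ admits an incarnation on top of $(E,F)$. This is the only point where the moduli structure of divisors with prescribed combinatorial type really intervenes; once this is granted, the remainder is just a combinatorial comparison of values of valuations through Lemma \ref{lem:nu_ind}.
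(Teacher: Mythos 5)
Your argument is correct. The paper offers no explicit proof of this lemma, listing it (together with Lemmas~\ref{lem:compar} and~\ref{lem:nu_ind}) merely as an ``immediate consequence'' of the intersection formula~(\ref{eq:mult}); your transport argument is a legitimate way to make that precise, and the realization step you flag as the delicate point is indeed where all the content lies. That step works because at each stage of reproducing the proximity structure of the triple $(E',F',V(h'))$ above $(E,F)$, a satellite point is forced while a free point admits a nonempty Zariski-open set of admissible positions on the relevant exceptional component; appealing to the moduli descriptions of \cite{BobTesis,Roe} is somewhat heavier machinery than needed, but it is not wrong. One remark: a shorter route, avoiding the realization argument entirely, is to invoke condition~(3) of Proposition~\ref{prop:val}, which characterizes $E\leq_\nu F$ by finitely many inequalities $\nu_H(h_E)\leq\nu_H(h_F)$ with $H$ ranging over the prime components of $Exc_S$ for $S$ the minimal model of $E+F$. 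Each such quantity is an intersection number determined by the multiplicity sequences alone via~(\ref{eq:mult}) (this is exactly Lemma~\ref{lem:nu_ind}), and the indexing set of divisors $H$ is itself combinatorial, so invariance under $(E,F)\equiv(E',F')$ is immediate. Although Proposition~\ref{prop:val} appears after Lemma~\ref{lem:val_top} in the text, its proof relies only on Lemma~\ref{lem:exchange}, so using it here would not introduce a circularity. Your direct construction has the mild advantage of not depending on that later result, at the cost of having to justify the realization of a combinatorial triple.
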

\begin{proof}

Consider the minimal blowing up sequence $\pi_{E,F}:X_{E,F}\to\CC^2$
 such that every component of $E$ and $F$ appear in the exceptional divisor.
 Consider the graph $\calH(E,F)$ to be the dual graph of the exceptional divisor; 
 weight each vertex by the 
 self-intersection of the corresponding prime divisor; mark the vertices corresponding to prime components in $E$ by a 
 label ``$E$'' and the vertices corresponding to prime components in $F$ by a label ``$F$'' (observe that a vertex can 
 carry the 2 labels).
 
 Given any function $h \in \mathcal{O}$, we consider 
 the strict transform $\widetilde{V(h)}$ to $X_{E,F}$ of the curve defined by it. 
 To any vertex of $\calH(E,F)$ we give a secondary weight given by the intersection number of the corresponding prime 
 divisor and the strict transform $\widetilde{V(h)}$. Observe that the graph $\calH(E,F)$ together 
 with the second weighting determine $\nu_E(h)$ and $\nu_F(h)$.
 
 Moreover, given any non-negative secondary system of weights 
 of the vertices, there exists a function $h$ which gives rise to those weights by the procedure above. It 
 can be constructed as follows: in the model $X_{E,F}$ we consider a curve that intersects each divisor $E$ with the 
 intersection multiplicity prescribed by the weight (this can be done because the exceptional 
 divisor has only normal crossings).
 Blow down the curve and consider an equation $h$ for it.

 The lemma holds because we have the combinatorial equivalence $(E,F)\equiv(E',F')$ if and only if the 2 decorated 
 graphs $\calH(E,F)$ and $\calH(E',F')$ are isomorphic, and because the possible pairs ($\nu_E(h),\nu_F(h)$) are 
 determined from the 
 non-negative weights explained above.

 \end{proof}

 The following lemma is obvious for prime divisors, but for the general case we include a short proof.

\begin{lem} \label{lem:antisym}
    If $E$ and $F$ are divisors above $0$ and $E \leq_\nu F \leq_\nu E$, 
       then $E = F$, that is $\leq_\nu$ is a partial order relation. 
\end{lem}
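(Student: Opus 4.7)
The plan is to reduce the antisymmetry of $\leq_\nu$ to the linear independence of the elementary divisorial valuations attached to the prime components of a common model of $E$ and $F$. First I would pick a model $\pi_S\colon S\to \CC^2$ on which both $E$ and $F$ appear (for instance the minimal common one), and denote by $D_1,\ldots,D_n$ all the prime components of $Exc_S$. Writing $E=\sum_i a_i D_i$ and $F=\sum_i b_i D_i$ with $a_i,b_i\in\NN$ (some possibly zero), the equality $\nu_E=\nu_F$ forced by $E\leq_\nu F\leq_\nu E$ reads
\[
\sum_i (a_i-b_i)\,\nu_{D_i}(h)=0 \quad\text{for every }h\in\calO.
\]
It therefore suffices to exhibit $n$ test functions $h_1,\ldots,h_n\in\calO$ for which the matrix $A:=\bigl(\nu_{D_i}(h_k)\bigr)_{i,k}$ is invertible.

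The natural choice is to take $h_k$ associated with $D_k$ in $S$ in the sense of Definition~\ref{def:assoc}: its strict transform $\widetilde{V(h_k)}$ meets $Exc_S$ transversely at a single smooth point lying on $D_k$, so that $\widetilde{V(h_k)}\cdot D_j=\delta_{jk}$ for every $j$. Writing the total transform
\[
\pi_S^{*}(V(h_k))=\widetilde{V(h_k)}+\sum_i \nu_{D_i}(h_k)\,D_i
\]
and intersecting with each $D_j$ (which annihilates the left-hand side since the total transform of a function has trivial intersection with every exceptional component) yields the linear system $M\cdot \bigl(\nu_{D_i}(h_k)\bigr)_{i}=-e_k$, where $M=(D_i\cdot D_j)_{i,j}$ is the intersection matrix of $Exc_S$. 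Hence $A=-M^{-1}$, and since $M$ is negative definite (as the intersection form on the exceptional fibre of a composition of point blow-ups over a smooth germ), $A$ is invertible.

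Specialising $\nu_E(h_k)=\nu_F(h_k)$ to each $k=1,\ldots,n$ then gives $A^{\top}(a-b)=0$, and invertibility of $A$ forces $a_i=b_i$ for every $i$, i.e.\ $E=F$. I do not foresee any serious obstacle: the existence of a curvetta of $D_k$ at a chosen smooth point of $Exc_S$ is standard, and the negative-definiteness of $M$ is classical. The only point worth verifying explicitly is that the functions $h_k$ can be picked independently of one another so that each identity $\widetilde{V(h_k)}\cdot D_j=\delta_{jk}$ holds in $S$; but this is immediate, since every $D_k$ has infinitely many smooth points of $Exc_S$ available.
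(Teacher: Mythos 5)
Your argument is correct, and it is genuinely different from the paper's. The paper proves antisymmetry by a combinatorial induction: comparing $\nu_E$ and $\nu_F$ against test branches of successively larger contact order with $V(h_E)$, it recovers step by step the multiplicity sequence and the infinitely near points of $E$, showing they must coincide with those of $F$. Your approach is algebraic: fix a common model $S$, expand $E=\sum a_iD_i$, $F=\sum b_iD_i$ over the components $D_i$ of $Exc_S$, and test $\nu_E=\nu_F$ against a curvetta $h_k$ of each $D_k$. The projection formula $0 = \pi_S^*(V(h_k))\cdot D_j = \widetilde{V(h_k)}\cdot D_j + \sum_i\nu_{D_i}(h_k)(D_i\cdot D_j)$ identifies the matrix $\bigl(\nu_{D_i}(h_k)\bigr)$ with $-M^{-1}$, and the negative definiteness (in particular invertibility) of the intersection matrix $M$ of $Exc_S$ then forces $a=b$. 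In effect you are showing that the valuations $\nu_{D_1},\dots,\nu_{D_n}$ are linearly independent as functionals on $\calO$. This is a cleaner, more structural argument; notably, the same $-M^{-1}$ computation appears later in the paper in the proof of Proposition~\ref{lem:val_def1}, so your proof makes the linear-algebra input explicit at an earlier stage, whereas the paper's inductive proof is more self-contained but lengthier. Both are valid; yours has the small advantage of not needing to reason separately about satellite versus free points or to track contact orders.
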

\begin{proof}
       Given $E$ and $F$ with $E \leq_\nu F \leq_\nu E$, we have to check that the 
       multiplicity sequences of $E$ and $F$ are the same and 
        that the same sequence of infinitely near points is blown up to obtain their minimal models. 

       Take a branch $V(h)$ with contact order $1$ with both $V(h_E)$ and $V(h_F)$, 
       for $h_E$ and    $h_F$ associated functions to $E$ and $F$. Then, 
       $\nu_E(h)=\nu_F(h)$ implies that the multiplicities of $h_E$ and $h_F$ are the same. 
       To see this implication we use Lemma \ref{lem:compar} and (\ref{eq:mult}).

       Take another branch $V(h)$ with contact order 2 with some branch of $V(h_E)$ 
       and no higher contact order with $V(h_F)$. We impose $\nu_E(h)=\nu_F(h)$. 
       Using $m_0(h_E)=m_0(h_F)$, we conclude that $V(h)$ has also contact order 
       $2$ with some branch of $V(h_F)$ and that the strict transforms of $V(h_E)$ and 
       $V(h_F)$ at that second contact point have the same multiplicity. 

       Using functions $h$ with increasingly contact order with the different branches of 
       $V(h_E)$, we inductively conclude that $V(h_E)$ and $V(h_F)$ share all the 
       infinitely near points above the origin $0$ and that their multiplicities are the same 
       at all of them.   
\end{proof}
 
To check whether two divisors are comparable for the valuative order, 
we can use the following criterion: 

\begin{prop}\label{prop:val}
    Let $E\neq F$ be divisors above $0$. Denote by 
    $S$ the minimal model of $E + F$. Given a divisor $D$ above $0$ in the model $S$, 
    $h_D$ denotes an associated function in $S$. The following are equivalent: 
\begin{enumerate}
   \item $E\leq_\nu F$, that is  $\nu_E(h)\leq \nu_F(h)$ for all $h\in\calO$. 
   \item $\nu_H(h_E)\leq \nu_H(h_{F})$ for all prime divisors $H$. 
   \item $\nu_H(h_E)\leq \nu_H(h_F)$ for all prime divisors $H$ appearing 
     in  $S$.
   \item $\nu_E(h_H)\leq \nu_F(h_H)$ for all prime divisors $H$ appearing in $S$. 
    \item $\nu_E(h_H)\leq \nu_F(h_H)$ for all prime divisors $H$ appearing in  
     the minimal model of $E$. 
   \item $\nu_H(h_E)\leq \nu_H(h_F)$ for all prime divisors $H$ appearing in  
      the minimal model of $E$. 
\end{enumerate}
\end{prop}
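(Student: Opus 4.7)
The plan is to prove the six conditions equivalent through a cycle of implications, most of which are direct. By Lemma~\ref{lem:exchange} applied to the pairs $(E,H)$ and $(F,H)$, one has the identities $\nu_E(h_H)=\nu_H(h_E)$ and $\nu_F(h_H)=\nu_H(h_F)$ (the choice of common model being immaterial by Lemma~\ref{lem:nu_ind}), which yield $(3)\Leftrightarrow(4)$ and $(5)\Leftrightarrow(6)$. The trivial restrictions of the quantifier on $H$ give $(2)\Rightarrow(3)\Rightarrow(6)$, since the minimal model of $E$ is contained in $S$, and $(4)\Rightarrow(5)$.

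To connect $(1)$ and $(2)$: the implication $(1)\Rightarrow(2)$ is obtained by specializing $h=h_H$ for any prime $H$ (in a sufficiently large common model) and using the exchange identity. For $(2)\Rightarrow(1)$, additivity of $\nu_E$ and $\nu_F$ on products reduces the problem to $h$ irreducible; letting $H$ be the last divisor of the minimal embedded resolution of $V(h)$, so that $h$ is generic among associated functions of $H$, Lemma~\ref{lem:nu_ind} combined with exchange gives
\[
\nu_E(h)=\nu_E(h_H)=\nu_H(h_E)\leq \nu_H(h_F)=\nu_F(h_H)=\nu_F(h).
\]

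The main step is $(6)\Rightarrow(2)$. To any prime divisor $P$ above $O$ I associate its \emph{thread} $T_P$, namely the ordered sequence of infinitely near points blown up in the minimal model of $P$. Combining Lemma~\ref{lem:compar} with the intersection formula~(\ref{eq:mult}) applied to generic curvettas yields, for any prime $H$ and any divisor $D=\sum_k c_k D_k$,
\[
\nu_H(h_D)=\sum_k c_k\,|T_H\cap T_{D_k}|,
\]
because a generic curvetta of a prime $P$ has multiplicity one on $T_P$ and vanishes at every fixed infinitely near point outside $T_P$ (its free endpoint being generic on the exceptional divisor of $P$). Now fix an arbitrary prime $H$, let $p_j$ be the last point of $T_H$ lying in the minimal model of $E$, and let $H_j$ be the prime divisor obtained by blowing up $p_j$. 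Since every component $E_i$ of $E$ has its thread $T_{E_i}$ entirely inside the minimal model of $E$, the common initial segments satisfy $T_H\cap T_{E_i}=T_{H_j}\cap T_{E_i}$, and hence $\nu_H(h_E)=\nu_{H_j}(h_E)$. Conversely, $T_{H_j}$ being an initial segment of $T_H$ gives $|T_H\cap T_{F_k}|\geq |T_{H_j}\cap T_{F_k}|$ for every component $F_k$ of $F$, so $\nu_H(h_F)\geq \nu_{H_j}(h_F)$. Applying $(6)$ to $H_j$ closes the chain:
\[
\nu_H(h_E)=\nu_{H_j}(h_E)\leq \nu_{H_j}(h_F)\leq \nu_H(h_F).
\]

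The main obstacle is to derive rigorously the formula for $\nu_H(h_D)$, and in particular to ensure that for generic choices the free endpoints of the curvettas of $h_E$ and $h_F$ in $S$ avoid the fixed thread $T_H$; this relies on the combinatorial invariance provided by Lemma~\ref{lem:nu_ind}.
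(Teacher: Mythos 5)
The proof strategy (a cycle of implications, with exchange handling the ``easy'' equivalences and a direct argument for $(6)\Rightarrow(2)$) matches the paper, but the crucial step $(6)\Rightarrow(2)$ rests on a false formula. You assert
\[
\nu_H(h_D)=\sum_k c_k\,|T_H\cap T_{D_k}|
\]
on the grounds that ``a generic curvetta of a prime $P$ has multiplicity one on $T_P$''. This is only true when $P$ is a free divisor. When $P$ is satellite, the multiplicity sequence of a curvetta is forced by the proximity inequalities to be greater than one at the initial points of the chain. For example, if $P$ is obtained by blowing up $O$, then a free point $p_1\in E_O$, then the satellite point $p_2=E_O\cap E_{p_1}$, a curvetta of $P$ has multiplicity sequence $(2,1,1)$. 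Using formula~(\ref{eq:mult}), $\nu_P(h_P)=2^2+1+1=6$, not $|T_P|=3$.

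As a consequence, the claimed identity $\nu_H(h_E)=\nu_{H_j}(h_E)$ fails. Indeed $H$ dominates $H_j$, hence $\nu_H\geq\nu_{H_j}$ as valuations, and the inequality is typically strict. Take $E$ to be the divisor after blowing up $O$ and $p_1$ (free) and $H$ as in the example above; then $H_j=E$, $\nu_{H_j}(h_E)=\nu_E(h_E)=1\cdot1+1\cdot1=2$, but $\nu_H(h_E)=2\cdot1+1\cdot1=3$. Your chain $\nu_H(h_E)=\nu_{H_j}(h_E)\leq\nu_{H_j}(h_F)\leq\nu_H(h_F)$ therefore breaks at the first link, and you cannot conclude. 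The paper avoids this by working with the \emph{divisorial} valuations $\nu_{H_j}$ rather than with intersection counts: it inducts along the chain $p_{j_0+1},p_{j_0+2},\dots$ using the recursion $\nu_{H_{j+1}}=\nu_{H_j}+\nu_{H_k}+m_{j+1}$ (satellite) or $\nu_{H_{j+1}}=\nu_{H_j}+m_{j+1}$ (free), and exploits that past the minimal model of $E$ the multiplicity $m_{j+1}$ of the strict transform of $V(h_E)$ vanishes (for generic $h_E$) while that of $V(h_F)$ is $\geq 0$. This preserves the inequality at each step without ever comparing $\nu_H$ directly with $\nu_{H_j}$ on $h_E$.

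There is a secondary gap in $(2)\Rightarrow(1)$: taking $H$ to be the last divisor of the minimal embedded resolution of $V(h)$ alone does not give $\nu_E(h)=\nu_E(h_H)$. If $E$ dominates $H$ and the strict transform of $V(h)$ happens to pass through the next blow-up point of the resolution of $E$, then $\nu_E(h)>\nu_E(h_H)$ for generic $h_H$. To repair this, choose $H$ in a larger model in which the strict transforms of $V(h)$ and of generic $V(h_E)$, $V(h_F)$ have already separated; then $V(h)$ genuinely behaves like a generic curvetta of $H$ with respect to both $E$ and $F$, and the displayed chain of equalities becomes valid.
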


\begin{proof}  The equivalences 
$(1)\Leftrightarrow(2)$, $(3)\Leftrightarrow(4)$ and $(5)\Leftrightarrow(6)$ are immediate 
consequences of Lemma \ref{lem:exchange}. The implications 
$(2)\Rightarrow(3)\Rightarrow (6)$ are immediate.

\medskip

Let us prove that $(6)\Rightarrow(2)$. Consider any prime divisor $H$. Let $\{p_j\}_{j\in J}$  
be the infinitely near points that we blow up to obtain
the minimal model of $H$. The index set $J$ is ordered by saying that 
$j>i$ if $p_j$ is infinitely near $p_i$. This  order is total since $H$ is prime. 
Let $H_j$ be the divisor 
which appears after blowing up $p_j$. Let $J'\subset J$ be the subset consisting 
of the indices $j' \in J$ 
such that $H_{j'}$ appears in the minimal 
model of $E$. It is enough to check that $\nu_{H_j}(h_E)\leq \nu_{H_j}(h_F)$ for all $j \in J$.  
The inequality is true by hypothesis for $j\in J'$. Assuming it is true up to $j$, 
we check it for $j+1$, the immediate successor of $j$ for the previous total order 
(provided it does not belong to $J'$):  
\begin{itemize}
        \item if $p_{j+1}$ is a satellite point, then it is the intersection point of $H_{j}$ with  $H_{k}$ 
           for some $k<j$ and then:  
					 $$\nu_{H_{j+1}}(h_E)=\nu_{H_j}(h_E)+\nu_{H_k}(h_E),$$
					 $$\nu_{H_{j+1}}(h_F)=\nu_{H_j}(h_F)+\nu_{H_k}(h_F) +m_j^F,$$
					where $m_{j}^F\geq 0$,
     \item if $p_{j+1}$ is a free point, then:  
				    \begin{equation}\label{eq:free}
				         \nu_{H_{j+1}}(h_E)=
                                                \nu_{H_j}(h_E)\leq \nu_{H_j}(h_F)+m_j^F= 
                                                \nu_{H_{j+1}}(h_F)
                                          \end{equation}
             where $m_{j+1}^F$ denotes the multiplicity at $p_{j+1}$ of the strict transform 
             of $V(h_F)$ (which could be $0$). 
    
\end{itemize}
In both cases we get the desired inequality $\nu_{H_{j+1}}(h_E)\leq \nu_{H_{j+1}}(h_F)$.
\end{proof}

Note that each of the conditions $(3)-(6)$ gives a \emph{finite} number of inequalities 
to be checked.

\begin{cor}\label{cor:val_max_cont}
   Assume $\nu_E\leq \nu_F$ with $E$ and $F$ prime divisors. If the prime divisor  
    $F'$ is such that $F'\equiv F$ and $Cont(E,F')\geq Cont(E,F)$, then we have 
    $\nu_{E}\leq \nu_{F'}$. 
\end{cor}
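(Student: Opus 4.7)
The plan is to apply the finite criterion furnished by Proposition~\ref{prop:val}: by condition~(5), the inequality $\nu_E\leq \nu_{F'}$ is equivalent to $\nu_E(h_H)\leq \nu_{F'}(h_H)$ for every prime divisor $H$ appearing in the minimal model of $E$. The same criterion applied to the hypothesis $\nu_E\leq \nu_F$ already gives $\nu_E(h_H)\leq \nu_F(h_H)$, so the whole content of the corollary reduces to proving $\nu_F(h_H)\leq \nu_{F'}(h_H)$ for every such $H$.

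Fix such an $H$, and let $k$ denote the number of blow-ups needed to create $H$. Since $H$ sits in the minimal model of $E$, the chain of infinitely near points producing $H$ is a prefix of the one producing $E$; a short inspection then yields
\[Cont(F,H)=\min(Cont(E,F),k)\qquad\text{and}\qquad Cont(F',H)=\min(Cont(E,F'),k),\]
so the hypothesis $Cont(E,F')\geq Cont(E,F)$ propagates to $Cont(F',H)\geq Cont(F,H)$. Choosing curvettas $h_H$, $h_F$, $h_{F'}$ generically enough that their strict transforms are disjoint in $S_F$ and $S_{F'}$ respectively, Lemma~\ref{lem:compar} together with the intersection formula~(\ref{eq:mult}) gives
\[\nu_F(h_H)=\sum_{i=0}^{Cont(F,H)-1} m_i^F\, m_i^H\qquad\text{and}\qquad \nu_{F'}(h_H)=\sum_{i=0}^{Cont(F',H)-1} m_i^{F'}\, m_i^H,\]
where $m_i^D$ denotes the multiplicity at the $i$-th infinitely near point of a curvetta of $D$.

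The hypothesis $F\equiv F'$ forces the multiplicity sequences of $F$ and $F'$ to coincide, so $m_i^F=m_i^{F'}$ for every $i$. Combined with $Cont(F',H)\geq Cont(F,H)$ and the non-negativity of all multiplicities, this yields $\nu_{F'}(h_H)\geq \nu_F(h_H)\geq \nu_E(h_H)$, as desired. The main technical point requiring care is the generic choice of curvettas turning the inequality of Lemma~\ref{lem:compar} into an equality; once that is arranged, the proof reduces to the trivial monotonicity observation that extending the upper limit of a sum of non-negative terms does not decrease it.
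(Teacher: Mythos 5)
Your proof is correct and follows essentially the same route as the paper's: reduce to a finite check via Proposition~\ref{prop:val} (you use condition~(5), the paper uses the exchange-equivalent condition~(6)), then show that for each prime divisor $H$ in the minimal model of $E$ the pairing of $H$ against $F'$ is at least as large as against $F$, using formula~(\ref{eq:mult}) together with the facts that the multiplicity sequences of $F$ and $F'$ agree and that $Cont(F',H)\geq Cont(F,H)$. You have merely spelled out in detail the step the paper states as ``it is clear.''
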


\begin{proof} 
    By hypothesis and using the equivalence $(1)\Leftrightarrow(6)$ of Proposition 
    \ref{prop:val},  we have that $\nu_H(h_E)\leq \nu_H(h_F)$ for all prime divisors  
    $H$ in the minimal model of $E$. 
      Using formula (\ref{eq:mult}) and the hypothesis, it is clear that $\nu_H(h_F)\leq \nu_H(h_{F'})$. 
      Using the equivalence $(6)\Leftrightarrow(1)$ of Proposition \ref{prop:val},  we conclude. 
\end{proof}

Using the same type of arguments, Lemma  \ref{lem:nu_ind} and Lemma~\ref{lem:val_top}, 
we get also the following: 

\begin{cor}\label{cor:max}
     Let $E=\sum_ia_iE_i$, $F=\sum_j b_jF_j$ be two divisors above $O\in\CC^2$ such that 
    $E\leq_\nu F$. Assume that $E'=\sum_i a_iE_i'$ and $F'=\sum_jb_jF_j'$ 
    are such that $E_i \equiv E_i'$ for all $i$, $F_j \equiv F_j'$ for all $j$ and moreover 
    $E \equiv E'$, $F \equiv F'$. 
    If $Cont(E_i', F_j')\geq Cont(E_i,F_j)$ for all $i$ and $j$, then $E'\leq_\nu F'$. 
\end{cor}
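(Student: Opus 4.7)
The plan is to reduce $E'\leq_\nu F'$, via the equivalence $(1)\Leftrightarrow(6)$ of Proposition~\ref{prop:val}, to the finitely many inequalities $\nu_{H'}(h_{E'})\leq \nu_{H'}(h_{F'})$, as $H'$ ranges over the primes appearing in the minimal model $S_{E'}$ of $E'$. The hypothesis $E\equiv E'$ provides a canonical combinatorial isomorphism between the minimal models $S_E$ and $S_{E'}$, so each such $H'$ corresponds to a prime $H$ in $S_E$ with $H\equiv H'$ and $Cont(H,E_i)=Cont(H',E_i')$ for every $i$. I will establish the equality $\nu_{H'}(h_{E'})=\nu_H(h_E)$ and the inequality $\nu_{H'}(h_{F'})\geq \nu_H(h_F)$, and then combine them with the hypothesis $\nu_H(h_E)\leq\nu_H(h_F)$, which is exactly Proposition~\ref{prop:val}(6) applied to the assumed $E\leq_\nu F$.

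For the $E$-side, since an associated function factors as a product of curvetta-defining functions according to Definition~\ref{def:assoc}, we have $\nu_H(h_E)=\sum_i a_i\nu_H(h_{E_i})$ and a parallel decomposition for the primed version. It suffices to observe that $\nu_H(h_{E_i})=\nu_{H'}(h_{E_i'})$ for each $i$, which follows from Lemma~\ref{lem:nu_ind}: the value only depends on the combinatorial type of the pair, which coincides for $(H,E_i)$ and $(H',E_i')$ by construction of the combinatorial identification of $S_E$ with $S_{E'}$.

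The heart of the matter is the inequality $\nu_{H'}(h_{F_j'})\geq \nu_H(h_{F_j})$ for each $j$. Since $H$ is a prime divisor in $S_E$, its blow-up sequence is a prefix of the one producing some component $E_i$; let $k_H$ denote its length. A combinatorial examination of the tree of infinitely near points yields the identity
\[
Cont(H,F_j)=\min\bigl(k_H,\,Cont(E_i,F_j)\bigr),
\]
together with the analogous identity $Cont(H',F_j')=\min(k_H,Cont(E_i',F_j'))$, with the same $k_H$ by the combinatorial correspondence. The hypothesis $Cont(E_i',F_j')\geq Cont(E_i,F_j)$ then yields $Cont(H',F_j')\geq Cont(H,F_j)$. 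Computing $\nu_H(h_{F_j})$ via Lemma~\ref{lem:compar} and formula~(\ref{eq:mult}) for a generic curvetta of $H$ expresses it as a finite sum of non-negative products of multiplicities,
\[
\nu_H(h_{F_j})=\sum_{k=1}^{Cont(H,F_j)} m_k^{h_H}\, m_k^{h_{F_j}},
\]
indexed by the shared infinitely near points; the analogous sum for $(H',F_j')$ has termwise-identical summands (since $H\equiv H'$ and $F_j\equiv F_j'$) but at least as many of them, giving the desired inequality exactly as in the proof of Corollary~\ref{cor:val_max_cont}. Multiplying by $b_j$ and summing over $j$ yields $\nu_{H'}(h_{F'})\geq\nu_H(h_F)$.

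The main obstacle is the combinatorial identity $Cont(H,F_j)=\min(k_H,Cont(E_i,F_j))$: one must case-distinguish whether the blow-up sequences for $H$ and $F_j$ separate before or after $p_H$, and verify that when $H$ is an ancestor of several $E_i$'s the resulting value of $Cont(H,F_j)$ is independent of the chosen $E_i$. Once this is settled, assembling the three estimates gives
\[
\nu_{H'}(h_{E'})=\nu_H(h_E)\leq \nu_H(h_F)\leq \nu_{H'}(h_{F'}),
\]
and another application of Proposition~\ref{prop:val}(6), now to the primed divisors, concludes $E'\leq_\nu F'$.
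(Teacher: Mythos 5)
Your proposal is correct and fills in, carefully, the argument the paper only sketches in one line (``using the same type of arguments, Lemma~\ref{lem:nu_ind} and Lemma~\ref{lem:val_top}''): reduce via Proposition~\ref{prop:val}(1)$\Leftrightarrow$(6) to checking finitely many inequalities on primes $H$ of the minimal model, use combinatorial invariance (Lemma~\ref{lem:nu_ind}) to match the $E$-side, and use formula~(\ref{eq:mult}) together with the contact hypothesis to get the $F$-side inequality termwise, exactly as in Corollary~\ref{cor:val_max_cont}. The combinatorial identity $Cont(H,F_j)=\min(k_H,Cont(E_i,F_j))$ you isolate is indeed the technical point to check, and it does hold (with a value independent of the chosen $E_i$ dominating $H$), so the proof is sound and is essentially the paper's intended argument spelled out.
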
 

Observing the rules of computation of the divisorial valuations, we get the following corollary:  

\begin{cor}\label{cor:finite_val} 
     Given a divisor $F$, there exists a finite number of combinatorial types of pairs $(E,F)$ 
     such that $E\leq_\nu F$. 
\end{cor}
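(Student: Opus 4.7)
My plan is to bound the combinatorial data of pairs $(E,F)$ with $E\leq_\nu F$. Since the combinatorial type of the pair consists of the combinatorial types of $E$ and $F$ together with the contact orders $Cont(E_i, F_j)$ between their prime components, and since $F$ is fixed, I must bound: the coefficients $a_i$ in the decomposition $E=\sum_i a_i E_i$ (hence also the number of components), the combinatorial type of each prime component $E_i$, and the integers $Cont(E_i, F_j)$.

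The bound on the coefficients is immediate. For any prime divisor $E_i$ above $O$, $\nu_{E_i}(x)\geq 1$ and $\nu_{E_i}(y)\geq 1$, so $\nu_{E_i}(xy)\geq 2$; evaluating $\nu_E\leq \nu_F$ at $xy$ yields $2\sum_i a_i \leq \nu_E(xy)\leq \nu_F(xy)$, a constant depending only on $F$.

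For each prime component $E_i$, the key tool is the equivalence $(1)\Leftrightarrow(2)$ of Proposition~\ref{prop:val} with $H:=E_i$, giving $\nu_{E_i}(h_E)\leq \nu_{E_i}(h_F)$. Expanding $h_E$ and $h_F$ as products of curvettes of their respective components, the left side is at least $\nu_{E_i}(h_{E_i})=\sum_k(m_k^{E_i})^2$, while the right is at most $\sum_j b_j\, I_O(V(h_{E_i}), V(h_{F_j}))$ by Lemma~\ref{lem:compar}. Writing each intersection number via formula~(\ref{eq:mult}) as $\sum_p m_p^{E_i}m_p^{F_j}$ summed over the common infinitely near points of the two curves and applying Cauchy--Schwarz gives $I_O(V(h_{E_i}), V(h_{F_j})) \leq \sqrt{\nu_{E_i}(h_{E_i})\,\nu_{F_j}(h_{F_j})}$ (the partial sums of squares over the common part being bounded by the full self-valuations). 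Chaining the inequalities,
\[
   \nu_{E_i}(h_{E_i}) \;\leq\; \sqrt{\nu_{E_i}(h_{E_i})}\cdot C_F, \qquad C_F:=\sum_j b_j\sqrt{\nu_{F_j}(h_{F_j})},
\]
so $\nu_{E_i}(h_{E_i}) \leq C_F^{\,2}$. Since any curvette of $E_i$ passes through every infinitely near point in the minimal blow-up sequence of $E_i$ with multiplicity at least one, $\nu_{E_i}(h_{E_i})\geq n_{E_i}$, bounding both the length of that sequence and the individual multiplicities. Thus only finitely many multiplicity sequences and free/satellite patterns occur, restricting the combinatorial type of $E_i$ to a finite set.

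The contact orders satisfy $Cont(E_i, F_j)\leq \min(n_{E_i}, n_{F_j})$ and are therefore also bounded, so overall only finitely many combinatorial types of pairs can arise. The main obstacle is justifying the Cauchy--Schwarz step cleanly: one must check that the partial sums $\sum_p(m_p^{E_i})^2$ and $\sum_p(m_p^{F_j})^2$, restricted to the common infinitely near points of $E_i$ and $F_j$, remain bounded by the full self-valuations, and that $C_F$ depends only on $F$. A subtle point is that $\nu_{E_i}(h_{E_i})$ also controls any extra free blow-ups made after the embedded resolution of an associated curve $V(h_{E_i})$ in order to reach the minimal model of $E_i$, since any curvette passes through each such extra point with multiplicity one and hence contributes to $\nu_{E_i}(h_{E_i})$.
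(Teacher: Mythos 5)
Your proof is correct, and it follows the same basic strategy as the paper's --- show that if $E$ is too large then the valuative inequality fails --- but it makes the argument substantially more quantitative. The paper's own proof reduces immediately to the prime case and then argues in vague terms (``suppose the multiplicities are too large or the sequence is too long \dots we obtain the strict inequality $\nu_E(h)>\nu_F(h)$''), leaving the actual bound implicit; it also glosses over the need to bound the number of components and their coefficients $a_i$, which you handle cleanly with the test function $xy$. Your key innovation is the Cauchy--Schwarz estimate $I_O(V(h_{E_i}),V(h_{F_j}))\le\sqrt{\nu_{E_i}(h_{E_i})\,\nu_{F_j}(h_{F_j})}$, which exploits the fact that the contributing infinitely near points (for generic curvettes) are a common initial segment of the two blow-up sequences, hence a subset of each minimal model's blow-up chain, so the partial sums of squares are dominated by the corresponding self-intersections $\nu_{E_i}(h_{E_i})$ and $\nu_{F_j}(h_{F_j})$. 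This yields the explicit, effective bound $\nu_{E_i}(h_{E_i})\le C_F^2$, from which the bounds on the length of the multiplicity sequence, the individual multiplicities (hence the free/satellite pattern), and the contact orders all follow immediately. Where you invoke Proposition~\ref{prop:val} with $H=E_i$, make sure it is clear that $\nu_{E_i}(h_E)\geq a_i\,\nu_{E_i}(h_{E_i})\geq \nu_{E_i}(h_{E_i})$ because $h_E$ contains $a_i\geq 1$ curvettes of $E_i$ as factors; this is the one step you left implicit. On balance, your version is more complete than the paper's sketch and gives an effective estimate where the paper merely asserts finiteness.
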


\begin{proof}

If $E\leq_\nu F$, then each of the prime components $E_i$ of $E$ satisfies the 
   analogous inequality $E_i \leq_\nu F$. 
Thus it is enough to assume
that $E$ is prime. In that case the sequence of blowing up centers
of the minimal model where $E$ appears is totally ordered.

Let $\{m_i^E\}_{i\in I_E}$ be the multiplicity sequence of a curve associated 
with $E$ in the minimal model of $E$. 
Let $g$, $h$ be  functions associated with $E$ in its minimal model, so that the 
strict transforms in this minimal model of
the curves defined by them are disjoint. In that case, by Lemma~\ref{lem:compar} and 
Formula~(\ref{eq:mult}) we have:

$$\nu_E(h)=I_0(V(g),V(h))=\sum_{i\in I_E} (m_i^E)^2.$$

The set $\{x_i\}_{i\in J}$ of infinitely near points that we need to blow up in order to obtain the minimal model $S$
where all components of  $F$ appear is finite. Let $f$ be a function 
associated with $F$ in $S$. Denote by $m_i^F$ the multiplicity of the 
strict transform of $V(f)$ at the point $x_i$. Choosing $f$ so that the strict 
transforms of $V(f)$ and $V(h)$ in the minimal model 
of $E\cup F$ are disjoint, by Lemma~\ref{lem:compar} and 
formula~(\ref{eq:mult}) we get:

$$\nu_F(h)=I_0(V(f),V(h))=\sum_{i\in I_E\cap J} m_i^F\cdot m_i^E.$$

Hence we have
\begin{equation}
  \label{ineqlocal}
     \nu_F(h)- \nu_E(h)
      =\sum_{i\in I_E\cap J} (m_i^F-m_i^E)\cdot m_i^E-\sum_{i\in I_E\setminus J} (m_i^E)^2.
\end{equation}

The inequality $\nu_E(h)\leq \nu_F(h)$ implies at once the inequalities 
   $m_i^E\leq m_i^F$ for any $i\in J\cap I_E$ and 
   that the cardinality of $I_E\setminus J$ is bounded by $\sum_{i\in J}(m_i^F)^2$. 
   Hence the cardinality of $I_E$ for 
     any $E$ satisfying $\nu_E\leq \nu_F$ is bounded
     by a universal bound only depending on the combinatorial type of $F$.

Moreover, the multiplicities $m_i^E$ are also universally bounded by bounds only 
    depending on the combinatorial type of $F$: otherwise, Formula~(\ref{ineqlocal}) 
    would contradict the inequality $\nu_E(h)\leq n_F(h)$.

Since the 
multiplicity sequences of the prime components of a given divisor and the 
orders of contact of its pairs of components 
determine the combinatorial type of the divisor, the corollary is proven.

\end{proof}

\subsection{The special case of prime divisors}\label{sec:speprime}

$\: $
\medskip

In this subsection we will show that Proposition \ref{prop:val} may be improved 
when both $E$ and $F$ are prime divisors (see Proposition \ref{prop:prime} below). 
We start by recalling several facts about the approximate roots of polynomials, 
whose proofs may be found in \cite{PPP}. In that reference one may also find 
explanations about the papers of Abhyankar and Moh 
in which this theory was developed. 
\medskip

In the sequel, if $C$ is a branch on $(\CC^2, 0)$, we say that 
$f \in \CC\{x\}[y]$ is a \emph{Weierstrass polynomial associated with $C$}  if:
  \begin{itemize}     
      \item  it is monic and irreducible; 
      \item it defines $C$ in some local coordinates $(x,y)$ on $(\CC^2, 0)$; 
      \item the $y$-degree $d_y(f)$  is equal to the multiplicity of $C$ at the origin.
    \end{itemize}
    
  Whenever the coordinate system $(x,y)$ is chosen 
  such that the $y$-axis is transversal  to $C$, 
  it determines a unique Weierstrass polynomial associated with $C$. In the sequel 
  we assume that such a coordinate system is fixed and that $f$ is the associated 
  polynomial. 
    
 If one computes the roots of $f$ 
 as Newton-Puiseux series in $x$, the sequence $\alpha_1 < \cdots < \alpha_g$ 
 of their rational \emph{characteristic exponents} is a topological invariant of $C$. 
 One has $g \geq 1$ if and only if $C$ is singular. Denoting 
 by $\beta_0 \in \NN^*$ the least common denominator of those numbers, 
 let us introduce Zariski's notation (see \cite{Zar}):
   \[ \beta_i := \beta_0 \cdot \alpha_i, \:  \forall \:   i \in \{1, ..., g \}. \]
 We say that $(\beta_0, \beta_1, ..., \beta_g)$ is the \emph{characteristic sequence} 
 of $C$ or of $f$. One has $d_y(f) = \beta_0 < \cdots < \beta_g$. Introduce also the integers:
    \[ e_i := gcd(\beta_0, ..., \beta_i) , \:  \forall \:   i \in \{0, ..., g \} \]
  and their successive quotients:
    \[n_i := \dfrac{e_{i-1}}{e_i}, \:  \forall \:   i \in \{1, ..., g \}. \]
 Those quotients are integers $\geq 2$, because one has the following sequence 
     of strict divisibilities: $1 = e_g | \cdots | e_1 | e_0 = 
 \beta_0 =   d_y(f)$. 
 Finally, define another sequence 
     $(\overline{\beta}_0, \overline{\beta}_1, ..., \overline{\beta}_g)$ 
 (which is the minimal generating sequence of the 
 semigroup of $C$, see \cite[Prop. 4.2]{PPP}):
   \[\overline{\beta}_0: = \beta_0, \: \: \:  \overline{\beta}_1 := \beta_1, \: \: \: 
    \overline{\beta}_{i+1} := n_i \overline{\beta}_i + \beta_{i+1} 
       - \beta_{i}, \:  \forall \:   i \in \{1, ..., g-1 \}. \]

 In general, when one has a divisor of the degree of a monic polynomial which is invertible 
 in the ring of coefficients of the polynomial, it is possible to associate to it canonically a new 
 polynomial (see \cite[Prop. 3.1]{PPP}):

 \begin{prop}
   Let $A$ be a commutative ring with unit and $P \in A[y]$ a monic polynomial 
   of degree $d >0$. Let $e >0$ be a divisor of $d$, which is moreover invertible 
   in $A$. Then there exists a unique monic polynomial $Q \in A[y]$ such that 
   $d_y(P - Q^e) < d- \displaystyle{\frac{d}{e}}.$
 \end{prop}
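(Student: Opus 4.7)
The plan is to prove both existence and uniqueness simultaneously by constructing the coefficients of $Q$ one at a time, reading off an explicit inductive formula. Since $Q^e$ must be monic of degree $d$, necessarily $Q$ is monic of degree $n := d/e$, so write
\[ Q = y^n + \sum_{i=0}^{n-1} q_i y^i = y^n + R, \quad \text{with } R \in A[y] \text{ of degree } < n. \]
The task is to show that the $q_i$ are uniquely and freely determined by the requirement that the coefficients of $y^{d-1}, y^{d-2}, \ldots, y^{d-n}$ in $P - Q^e$ vanish, which is equivalent to $d_y(P-Q^e) < d - n = d - d/e$.

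The key observation is the following degree estimate obtained from the binomial expansion
\[ Q^e = (y^n + R)^e = y^d + \sum_{k=1}^{e} \binom{e}{k} y^{n(e-k)} R^k. \]
The term $y^{n(e-k)} R^k$ has $y$-degree at most $n(e-k) + k(n-1) = d - k$, with equality iff the leading coefficient of $R^k$ is nontrivial. Hence for $1 \leq j \leq n$, the coefficient of $y^{d-j}$ in $Q^e$ takes the form
\[ [y^{d-j}] Q^e = e \cdot q_{n-j} + \Phi_j(q_{n-1}, q_{n-2}, \ldots, q_{n-j+1}), \]
where $\Phi_j$ is a polynomial with integer coefficients depending only on the $q_i$ with index $> n-j$ (the summand $e\cdot q_{n-j}$ comes exclusively from $k=1$, while higher $k$ contribute only to the lower-indexed coefficients of $R$).

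Given this structural formula, the proof proceeds by induction on $j$. Setting $P = \sum_{\ell} p_\ell y^\ell$, the vanishing condition $[y^{d-j}](P-Q^e) = 0$ reads
\[ e \cdot q_{n-j} = p_{d-j} - \Phi_j(q_{n-1}, \ldots, q_{n-j+1}). \]
Since $e$ is invertible in $A$ and the right-hand side is already determined by the previously chosen coefficients, this uniquely prescribes $q_{n-j} \in A$. Carrying out this recursion for $j = 1, 2, \ldots, n$ produces a unique monic $Q$ of degree $n$ for which the coefficients of $y^{d-1}, \ldots, y^{d-n}$ in $P - Q^e$ all vanish, which is exactly the inequality $d_y(P-Q^e) < d - d/e$.

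The only point that requires genuine care is the structural formula for $[y^{d-j}]Q^e$, in particular the fact that $q_{n-j}$ enters linearly via $e\cdot q_{n-j}$ with no other contribution at that level; once this is pinned down the invertibility of $e$ does the rest, and no substantial obstacle remains.
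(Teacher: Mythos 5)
Your argument is correct. The paper itself does not include a proof of this statement: it cites it from \cite[Prop.~3.1]{PPP}, so there is no in-paper proof to compare against. Your coefficient-by-coefficient recursion is the standard elementary argument for the existence and uniqueness of approximate roots, and the key structural claim — that in the expansion $Q^e=(y^n+R)^e=y^d+\sum_{k\geq 1}\binom{e}{k}y^{n(e-k)}R^k$, the coefficient of $y^{d-j}$ (for $1\leq j\leq n=d/e$) equals $e\,q_{n-j}$ plus a universal polynomial in $q_{n-1},\dots,q_{n-j+1}$ — is verified by the degree bound $\deg\bigl(y^{n(e-k)}R^k\bigr)\leq d-k$ together with the observation that any monomial $q_{i_1}\cdots q_{i_k}$ contributing to $[y^{nk-j}]R^k$ for $k\geq 2$ forces each $i_m\geq n+1-j>n-j$. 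One small verbal slip: you say higher $k$ ``contribute only to the lower-indexed coefficients of $R$,'' whereas what happens (and what your displayed formula correctly records) is that they involve only the \emph{higher}-indexed $q_{n-1},\dots,q_{n-j+1}$, i.e.\ those already fixed at earlier stages of the recursion. With that wording corrected, the proof is complete; invertibility of $e$ in $A$ then determines $q_{n-j}$ uniquely at each step.
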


 This polynomial is called the \emph{$e$-th approximate root} of $P$. It is of degree 
 $\displaystyle{\frac{d}{e}}$.

 In particular, one may consider the $e_0, ... , e_g$-th approximate roots of $f$. 
 Denote them by $f_0, ..., f_g$ respectively. Therefore 
 $f_g = f$. Moreover (see \cite[Thm. 5.1]{PPP}):

 \begin{prop}
    The approximate roots $(f_k)_{0 \leq k \leq g}$ have the following properties: 
       \begin{enumerate}
           \item $d_y(f_k)  = \displaystyle{\frac{\beta_0}{e_k}}$ and 
               $I_0(V(f), V(f_k)) = \overline{\beta}_{k+1}$  
               (where, by convention, $\overline{\beta}_{g+1}= + \infty$); 
            \item the polynomial $f_k$ is a Weierstrass polynomial for $V(f_k)$ 
               and its characteristic sequence is \linebreak 
                 $\displaystyle{(\frac{\beta_0}{e_k}, ...,  \frac{\beta_k}{e_k})}$. 
       \end{enumerate}
 \end{prop}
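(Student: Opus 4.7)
The plan is to construct a candidate polynomial $g_k$ directly from the Puiseux expansion of $f$, verify that $g_k$ satisfies the properties claimed in (1) and (2), and then identify $g_k$ with the approximate root $f_k$ via the uniqueness statement of the preceding proposition.

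First I would fix a Newton--Puiseux root $\xi(x)$ of $f$ and let $\xi^{[k]}(x)$ be its truncation consisting of all terms with exponent strictly less than $\alpha_{k+1}$ (so $\xi^{[k]}$ retains exactly the characteristic data $\alpha_1,\ldots,\alpha_k$). Since the exponents appearing in $\xi^{[k]}$ have denominators dividing $\beta_0/e_k$, the Galois orbit of $\xi^{[k]}$ over $\CC\{x\}$ consists of $\beta_0/e_k$ distinct series. I would define $g_k \in \CC\{x\}[y]$ as the monic polynomial of $y$-degree $\beta_0/e_k$ whose roots are this orbit. By construction $g_k$ is a Weierstrass polynomial for an irreducible branch with characteristic sequence $(\beta_0/e_k,\beta_1/e_k,\ldots,\beta_k/e_k)$, which gives statement (2).

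Next I would compute the intersection multiplicity via the Puiseux product formula
$$I_O(V(f),V(g_k)) \;=\; \sum_{\eta,\zeta} v_x\bigl(\eta(x)-\zeta(x)\bigr),$$
where $\eta$ runs over the Puiseux roots of $f$ and $\zeta$ over those of $g_k$. The valuation of each factor is determined by the first characteristic exponent at which $\eta$ and $\zeta$ differ, and a bookkeeping using the chain of divisibilities $e_g \mid \cdots \mid e_0 = \beta_0$ and the quotients $n_i = e_{i-1}/e_i$ produces the recursion $I_O(V(f),V(g_k)) = n_k\overline{\beta}_k + \beta_{k+1} - \beta_k = \overline{\beta}_{k+1}$. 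For $k=g$ the truncation is exact, so $g_g = f$ and the intersection is $+\infty = \overline{\beta}_{g+1}$ by convention.

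The remaining and most delicate step is to verify that $g_k$ satisfies the defining inequality of the $e_k$-th approximate root, namely $d_y(f - g_k^{e_k}) < \beta_0 - \beta_0/e_k$. The essential point is that both $g_k^{e_k}$ and $f$ are monic of $y$-degree $\beta_0$, and their Puiseux roots agree termwise up to the exponent $\alpha_{k+1}$, so the top $\beta_0/e_k$ coefficients of $f - g_k^{e_k}$ as a polynomial in $y$ must vanish. This is the main obstacle: it requires a careful quantitative comparison of the Newton--Puiseux data of $f$ and $g_k^{e_k}$ on their common initial segment, tracking how the discrepancy of order $\alpha_{k+1}$ in each root propagates into the $y$-degree of the difference after taking symmetric functions. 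Once this inequality is established, the uniqueness clause of the preceding proposition forces $g_k = f_k$, and both (1) and (2) follow.
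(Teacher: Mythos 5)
The paper does not actually prove this proposition; it is recalled from \cite[Thm.~5.1]{PPP}. Your strategy -- build a candidate $g_k$ from the Puiseux data, verify (1) and (2) for $g_k$, then match $g_k$ with $f_k$ via the uniqueness of the approximate root -- is reasonable in outline, but the candidate you chose does not work: the minimal polynomial of the truncation of a Newton--Puiseux root at the exponent $\alpha_{k+1}$ is \emph{not} the approximate root $f_k$ in general. The step you flag as ``the most delicate'' -- checking $d_y(f - g_k^{e_k}) < \beta_0 - \beta_0/e_k$ -- is not merely hard to verify; it is false. Take $f = y^2 - 2x^2y + x^4 - x^3$, whose Puiseux roots are $\pm x^{3/2} + x^2$, so $\beta_0 = 2$, $\beta_1 = 3$, $g = 1$, $e_0 = 2$. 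Your truncation at $\alpha_1 = 3/2$ keeps only the terms of exponent $< 3/2$, i.e.\ nothing, so $g_0 = y$. But $f - y^2 = -2x^2 y + x^4 - x^3$ has $y$-degree $1$, which is not $< \beta_0 - \beta_0/e_0 = 1$; the genuine second approximate root is $f_0 = y - x^2$ (and then $f - f_0^{\,2} = -x^3$). The approximate root $f_k$ remembers non-characteristic low-denominator terms of the Puiseux expansion that lie \emph{beyond} $\alpha_{k+1}$ -- exactly what your truncation throws away.

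What you have actually built is what the literature calls a \emph{$k$-semiroot}: your $g_k$ does share with $f_k$ the $y$-degree $\beta_0/e_k$, the characteristic sequence $(\beta_0/e_k, \dots, \beta_k/e_k)$, and the contact $I_O(V(f),V(g_k)) = \overline{\beta}_{k+1}$, so your verification of (1) and (2) for $g_k$ is plausible. But those properties do not single out a unique polynomial, $f_k$ is a different semiroot, and the uniqueness clause of the preceding proposition cannot be invoked to force $g_k = f_k$. The Abhyankar--Moh theorem you are trying to reprove is exactly the nontrivial statement that the approximate roots (defined purely algebraically, with no reference to Puiseux data) turn out to be semiroots; its known proofs go by induction on $k$ using repeated Tschirnhaus operations and Newton-polygon bookkeeping, not by identifying $f_k$ with a truncation. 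To salvage your plan you would need to abandon the identification $g_k = f_k$ and instead argue directly about $f_k$ along those inductive lines, as is done in \cite{PPP}.
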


 For us, the most important property of the previous sequence of approximate 
 roots of $f$ is the possibility to express canonically any other polynomial 
 $(f_0, ..., f_g)$-adically, similarly to the expansion of integers in a basis of numeration 
 (see \cite[Cor. 5.4]{PPP}):

\begin{prop}  \label{prop:maintool}
    Every (not necessarily monic) polynomial $h \in \CC\{x\}[y]$ may be expressed 
    uniquely as a finite sum of the form:
       \[ h = \sum_{(i_0, ..., i_g) \in J(h)} a_{i_0, ..., i_g} f_0^{i_0} \cdots f_g^{i_g} \]
    where: 
        \begin{itemize}
             \item the coefficients $a_{i_0, ..., i_g}$ are elements of the ring $\CC\{x\}$; 
             \item   the indices satisfy the inequalities   $0 \leq i_k \leq n_{k+1} -1$ for any 
                    $k \in \{0, ..., g-1\}$ and $i_g$ is any    non-negative integer; 
             \item $J(h)$ is the set of indices with non-vanishing coefficients, that is, 
                 the support of $h$ for this expansion. 
         \end{itemize} 
    Moreover, the intersection numbers 
    $I_0(V(f), V(a_{i_0, ..., i_{g-1},0} f_0^{i_0}   \cdots f_{g-1}^{i_{g-1}}))$ 
    of $C$ with the curves defined by the various terms of the sum 
                  which do not vanish on $C$ are  pairwise distinct. 
\end{prop}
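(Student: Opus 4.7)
My plan for the existence and uniqueness of the $(f_0, \ldots, f_g)$-adic expansion is to carry out iterated Euclidean division in $\CC\{x\}[y]$. Each approximate root $f_k$ is monic in $y$ of degree $d_y(f_k) = \beta_0/e_k$, and the consecutive degree ratios satisfy $d_y(f_k)/d_y(f_{k-1}) = e_{k-1}/e_k = n_k$. First, I would divide $h$ by $f_g$ to write $h = \sum_{i_g \geq 0} h_{i_g}\, f_g^{i_g}$ with each $d_y(h_{i_g}) < \beta_0$. Since $\beta_0 = n_g \cdot d_y(f_{g-1})$, a further division expresses each $h_{i_g}$ uniquely as $\sum_{i_{g-1}=0}^{n_g-1} h_{i_{g-1},i_g}\, f_{g-1}^{i_{g-1}}$ with $d_y(h_{i_{g-1},i_g}) < d_y(f_{g-1})$. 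I would iterate downward through $f_{g-2}, \ldots, f_0$; since $d_y(f_0) = 1$, the innermost remainders lie in $\CC\{x\}$, which yields the expansion with the prescribed bounds $0 \leq i_k \leq n_{k+1}-1$ for $k<g$ and $i_g$ unbounded. Uniqueness of each Euclidean remainder propagates to uniqueness of the whole expansion.

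For the distinctness of intersection numbers, I would first note that any nonzero term $a_{(i)} f_0^{i_0} \cdots f_{g-1}^{i_{g-1}}$ with $i_g = 0$ has finite intersection with $C$: each $f_k$ for $k<g$ defines a branch distinct from $C = V(f)$, while $a_{(i)} \in \CC\{x\}$ factors as $x^{\ord_x(a_{(i)})}$ times a unit in $\CC\{x\}$ and so vanishes only on the $y$-axis, which is not a component of $C$. By multiplicativity of local intersection numbers, together with $I_O(V(f), V(x)) = d_y(f) = \overline{\beta}_0$ and $I_O(V(f), V(f_k)) = \overline{\beta}_{k+1}$ from the preceding proposition, the intersection number of this term with $C$ equals
$$\ell\, \overline{\beta}_0 + \sum_{k=0}^{g-1} i_k\, \overline{\beta}_{k+1},$$
where $\ell := \ord_x(a_{(i)})$.

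To conclude, I would argue by contradiction: if two distinct non-vanishing terms produced equal intersection numbers with $C$, then with $\ell, \ell'$ and $(i_\bullet), (i'_\bullet)$ denoting the respective data, we would obtain two equal sums of the above shape with digits constrained by $0 \leq i_k, i_k' \leq n_{k+1}-1$ and $\ell, \ell' \geq 0$. These provide two \emph{normalized} representations of the same element of the semigroup $\Gamma := \langle \overline{\beta}_0, \ldots, \overline{\beta}_g \rangle$ of the branch $C$. The hard step is then to invoke the classical uniqueness of such normalized expansions in the semigroup of a plane branch, which rests on the fact (going back to Zariski) that $n_k \overline{\beta}_k \in \langle \overline{\beta}_0, \ldots, \overline{\beta}_{k-1}\rangle$ is the minimal dependence relation, so that each digit $i_{k-1}$ can be recovered inductively (from $k=g$ downward) by reduction modulo $n_k$. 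This is the main anticipated obstacle; once it is granted, all the digits match and the contradiction follows. Everything else in the argument is standard Euclidean division and multiplicativity of intersection multiplicities at $O$.
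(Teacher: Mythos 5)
The paper does not actually prove Proposition~\ref{prop:maintool}: it is quoted, with the other facts about approximate roots, from \cite[Cor.~5.4]{PPP}. So there is no internal proof to compare against. Evaluated on its own terms, your argument is correct and is essentially the standard proof one finds in the cited reference.

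A few comments confirming the key checkpoints. For the expansion: the iterated Euclidean division works precisely because $d_y(f_k)/d_y(f_{k-1}) = e_{k-1}/e_k = n_k$, so each remainder of degree $< d_y(f_k)$ has a unique $f_{k-1}$-adic expansion with digits in $\{0,\dots,n_k-1\}$, and $d_y(f_0)=1$ terminates the recursion in $\CC\{x\}$. For the intersection numbers: a term with $i_g=0$ has $y$-degree $<\beta_0=\deg_y f$ and is nonzero, so $f$ cannot divide it; multiplicativity of $I_O(V(f),\cdot)$ together with $I_O(V(f),V(x))=\overline{\beta}_0$ and $I_O(V(f),V(f_k))=\overline{\beta}_{k+1}$ indeed gives $\ell\,\overline{\beta}_0+\sum_{k=0}^{g-1} i_k\,\overline{\beta}_{k+1}$. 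The remaining step you flag as the ``hard step''---injectivity of the map from normalized tuples $(\ell,i_0,\dots,i_{g-1})$ with $\ell\geq 0$, $0\leq i_{k}\leq n_{k+1}-1$ to $\NN$---is the classical Ap\'ery/Zariski uniqueness of the normalized expansion in the numerical semigroup $\langle\overline{\beta}_0,\dots,\overline{\beta}_g\rangle$. The cleanest way to nail it down, slightly sharper than the phrasing you give, is to use the gcd tower $e_j=\gcd(\overline{\beta}_0,\dots,\overline{\beta}_j)$: all terms $\overline{\beta}_j$ with $j<k$ are divisible by $e_{k-1}$, while $\gcd(\overline{\beta}_k,e_{k-1})=e_k$, so reducing the equality modulo $e_{k-1}$ determines the top digit modulo $n_k=e_{k-1}/e_k$ and hence exactly, proceeding downward from $k=g$. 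This is precisely the ``reduction modulo $n_k$'' you describe, and it closes the argument. No gaps.
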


Let us recall now some facts about the minimal embedded resolution 
$\pi_S : S \to \CC^2$ of $C=V(f)$.  Extend first the notation $f_k$ 
to the index $k=-1$ by:
  \[ f_{-1} := x. \]
Then (see \cite[Cor. 5.6]{PPP}):

\begin{prop} \label{prop:topint}
   The components of $Exc_S$ which intersect exactly one other component 
   of $Exc_S$ are precisely those which are intersected by the strict transforms 
   of the branches $V(f_k)$, for $k \in \{-1, ..., g-1\}$. Those components are pairwise 
   distinct, and if we denote by $E_C^{(k)}$ the one intersected by $V(f_k)$, then 
   $f_k$ is a curve associated with $E_C^{(k)}$, for all $k \in \{-1, ..., g-1\}$ (see Figure 
   \ref{fig:ApproxRoots2}).
\end{prop}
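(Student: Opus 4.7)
My plan is to identify, for each $k \in \{-1, \dots, g-1\}$, a specific component $E_C^{(k)}$ of $\mathrm{Exc}_S$ such that $f_k$ is associated with $E_C^{(k)}$ in $S$, and to verify that these $g+1$ divisors are pairwise distinct and exhaust the components of $\mathrm{Exc}_S$ that meet only one other component. The key tools are the intersection identity $I_O(V(f), V(f_k)) = \overline{\beta}_{k+1}$ combined with formula~(\ref{eq:mult}) and the known structure of the dual graph of a plane branch's minimal embedded resolution.

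I would first organize $\pi_S$ into $g$ successive packets of blow-ups, one per Puiseux pair of $C$. For $k \geq 0$, since $V(f_k)$ has characteristic sequence $(\beta_0/e_k, \dots, \beta_k/e_k)$, its multiplicity sequence agrees with the initial portion of that of $V(f)$, and its minimal embedded resolution $S_k$ is obtained by executing the first $k$ packets of $\pi_S$; in particular $S$ dominates $S_k$. Applying~(\ref{eq:mult}) to the pair $(V(f), V(f_k))$ and accounting the contributions packet by packet, the common infinitely near points give exactly $\overline{\beta}_{k+1}$ only when one pushes through the initial blow-ups of the $(k+1)$-th packet up to a precise point, after which the strict transforms of $V(f)$ and $V(f_k)$ must separate. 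I would take $E_C^{(k)}$ to be the component met by $\widetilde{V(f_k)}$ at the moment of separation: in $S$, $\widetilde{V(f_k)}$ becomes smooth and meets $\mathrm{Exc}_S$ transversely at a single free smooth point of $E_C^{(k)}$, distinct from $\widetilde{V(f)} \cap E_C^{(k)}$. Since all further blow-ups of $\pi_S$ lie along $\widetilde{V(f)}$'s subsequent trajectory and therefore do not touch this intersection, $f_k$ is associated with $E_C^{(k)}$ in $S$ in the sense of Definition~\ref{def:assoc}. The case $k = -1$ is immediate: $V(x)$ is transverse to the tangent cone $\{y = 0\}$ of $C$, so its strict transform hits $E_O$ at a free point distinct from $\widetilde{V(f)} \cap E_O$ and persists under subsequent blow-ups, giving $E_C^{(-1)} = E_O$.

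The $E_C^{(k)}$ are pairwise distinct because they arise at distinct stages of distinct packets. Each is a leaf of the dual graph of $\mathrm{Exc}_S$: using the observation that a component's final valence equals its valence at creation (one if created at a free blow-up) plus the number of subsequent free blow-ups centered on it, the component $E_C^{(k)}$ is born at a free blow-up, and no later free blow-up lies on it since these all follow $\widetilde{V(f)}$'s trajectory away from $E_C^{(k)}$. Finally, by the classical structural fact that the dual graph of the minimal embedded resolution of a plane branch with $g$ Puiseux pairs has exactly $g+1$ leaves (see e.g.~\cite[Chap.~3]{Casas} or~\cite[Chap.~5]{dJPf}), the $g+1$ divisors $E_C^{(-1)}, \dots, E_C^{(g-1)}$ account for all the valence-one components of $\mathrm{Exc}_S$. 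The main obstacle is the packet-by-packet multiplicity accounting that identifies the exact point of separation in the $(k+1)$-th packet; this requires a careful combinatorial analysis of the multiplicity sequences and could alternatively be carried out by induction on $g$, the inductive step examining how a single packet modifies the dual graph.
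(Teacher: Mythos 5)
The paper itself gives no proof of this proposition --- it is quoted directly from \cite[Cor.~5.6]{PPP} --- so there is no in-paper argument to compare against; I can only judge the outline on its own terms.

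Your strategy is well-aimed: use $I_O(V(f),V(f_k))=\overline{\beta}_{k+1}$ together with formula~(\ref{eq:mult}) to locate the last infinitely near point shared by $V(f)$ and $V(f_k)$, show that the divisor created by blowing it up is a leaf of the dual graph, and finish with the count of $g+1$ leaves. But two things keep it from being a proof. One is a factual slip: ``its multiplicity sequence agrees with the initial portion of that of $V(f)$'' is false. $V(f_k)$ and $V(f)$ do pass through the same initial sequence of infinitely near points, but $m_O(V(f_k))=\beta_0/e_k\neq\beta_0=m_O(V(f))$; what coincides are the \emph{locations} of the shared centres, while the multiplicities of $V(f_k)$ along its own minimal resolution are those of $V(f)$ divided by $e_k$. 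The other, more serious, issue is that you explicitly defer the heart of the matter: the packet-by-packet accounting that pins down the separation point, shows it is a \emph{free} point (so that $E_C^{(k)}$ is born with valence one), and verifies that every later free centre avoids $E_C^{(k)}$. You assert that ``$E_C^{(k)}$ is born at a free blow-up,'' but this is exactly what has to be established --- if the last shared centre were a satellite point, $E_C^{(k)}$ would already have valence two at creation and could never be a leaf. The valence bookkeeping and the $g+1$-leaf count are the easy parts; the Euclidean-algorithm analysis of the multiplicity sequence that you flag as the ``main obstacle'' is the actual mathematical content of the statement. Until it is carried out (for instance by the induction on $g$ you mention), this is a plausible proof strategy, not yet a proof.
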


More generally, if a prime divisor $E$ is given and $S$ denotes its minimal model, 
then $S$ is an embedded resolution of the branch $V(h_E)$, where $h_E$ is 
any function associated with $E$. Apply the previous proposition to $f = h_E$. 
It may be shown that the components $(E_{V(h_E)}^{(k)})_{-1 \leq k \leq g-1}$ 
are independent of the choice of function $h_E$. We will denote them simply 
by $(E^{(k)})_{-1 \leq k \leq g-1}$. Extend this notation by saying that 
$E^{(g)}:=E$. Therefore, the strict transform of $V(f_k)$ on $S$ is a curvetta for $E^{(k)}$, 
for all $k \in \{-1, ..., g\}$.

\begin{figure}%
\includegraphics[width=110mm]{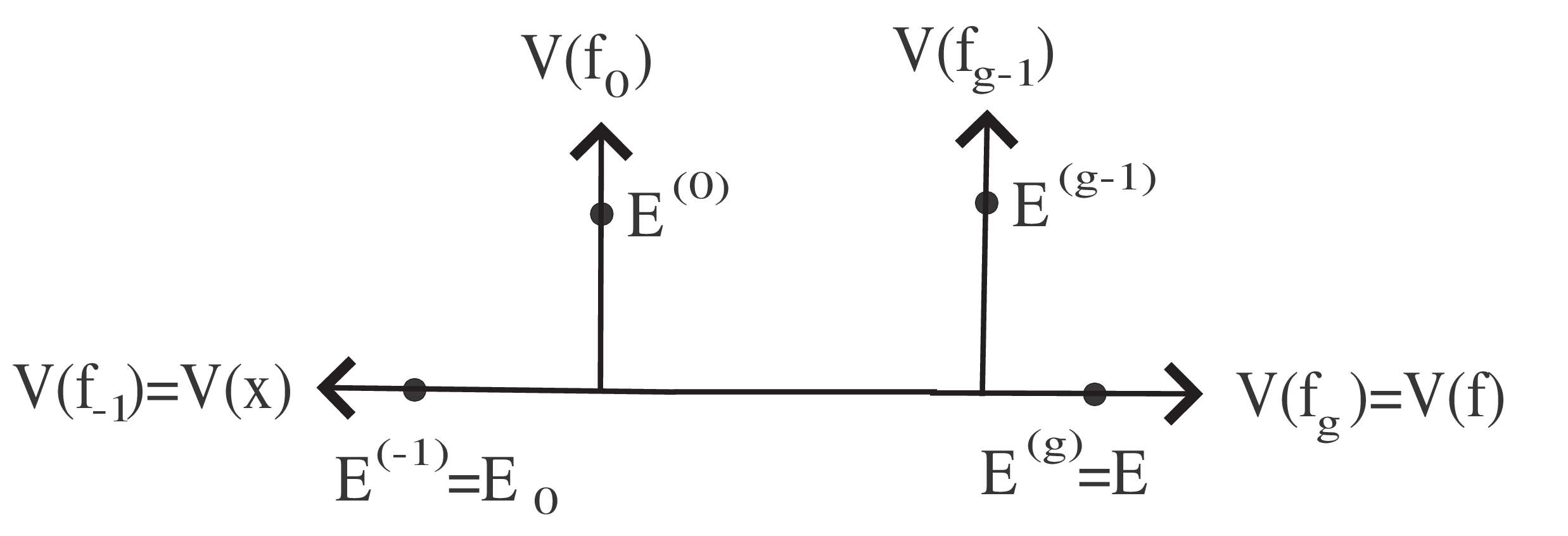}%
\caption{The strict transform of the branch $V(f_k)$ defined by the approximate root $f_k$ is represented by an arrow labeled ``$V(f_k)$''}
\label{fig:ApproxRoots2}%
\end{figure}

Using the two previous propositions, one may reprove the following description 
of the valuation $\nu_E$ in terms of the approximate roots $f_k$, which is 
also a consequence of Spivakovsky's  \cite[Thm. 8.6]{Spi}:

\begin{lem} \label{lem:val_approx}
       For any $h \in \mathcal{O}$, if one expands it as in Proposition 
       \ref{prop:maintool}, then one has:
           \begin{equation}\label{eq:eqE}  \nu_E(h) = 
           \min \{ \nu_E(a_{i_0, ..., i_g} f_0^{i_0} \cdots f_g^{i_g} ) \: \:  
                   | \: \:   (i_0, ..., i_g) \in J(h)  \}.   \end{equation}         
\end{lem}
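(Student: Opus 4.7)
Since $E$ is prime, $\nu_E$ is a genuine valuation and the inequality $\nu_E(h) \geq \min_\alpha \nu_E(T_\alpha)$, with $T_\alpha := a_{i_0,\ldots,i_g} f_0^{i_0} \cdots f_g^{i_g}$ for $\alpha = (i_0,\ldots,i_g) \in J(h)$, is automatic. The substance of the lemma is to exclude cancellation among the terms attaining this minimum.

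My first step is to treat the case where every term of $h$ has $i_g = 0$; equivalently, $d_y(h) < d_y(f) = \beta_0$. Setting $f_{-1} := x$, Proposition~\ref{prop:topint} together with the remarks following it imply that each $f_k$ with $-1 \leq k \leq g-1$ defines on $S_E$ a curvetta of the prime divisor $E^{(k)}$, which is distinct from $E = E^{(g)}$. Therefore the strict transform on $S_E$ of $V(T_\alpha)$ does not meet that of $V(f) = V(h_E)$, and Lemma~\ref{lem:compar} gives $\nu_E(T_\alpha) = I_O(V(f), V(T_\alpha))$. The last assertion of Proposition~\ref{prop:maintool} asserts that these intersection numbers are pairwise distinct as $\alpha$ varies in $J(h)$. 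Hence in this case the minimum is attained by a unique term, no cancellation is possible, and (\ref{eq:eqE}) holds.

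For an arbitrary $h$, I would group the expansion by the power of $f_g = f$: write $h = \sum_{j \geq 0} f^j\, h_j$, with each $h_j$ involving only terms with $i_g = 0$. By the preceding step, $\nu_E(h_j)$ equals the minimum of the $\nu_E$-values of its terms, attained by a unique one, and $\nu_E(f^j h_j) = j\nu_E(f) + \nu_E(h_j)$. Thus (\ref{eq:eqE}) reduces to the absence of cancellation between distinct $f^j h_j$ having coincident $\nu_E$-values; this is the main obstacle. To address it I would pass to the graded ring $\mathrm{gr}_{\nu_E}\calO$, which is an integral domain since $\nu_E$ is a rank-one discrete valuation. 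Denoting by $\overline{g}$ the initial form of $g \in \calO$, the element $\overline{h_j}$ is non-zero by the preceding step, and therefore so is $\overline{f}^j \overline{h_j}$. The crucial claim is the linear independence, inside each fixed graded piece of $\mathrm{gr}_{\nu_E}\calO$, of those elements $\overline{f}^j \overline{h_j}$ coming from distinct values of $j$ that realize the same total value. This can be deduced by interpreting the sequence $(x, f_0,\ldots,f_g)$ as a complete system of key polynomials for $\nu_E$ in the sense of MacLane--Vaqui\'e, or equivalently by appealing to Spivakovsky's Theorem~8.6 mentioned in the lemma: both supply the required cancellation-free structure. Once this linear independence is secured, (\ref{eq:eqE}) follows immediately.
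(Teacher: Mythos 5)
Your treatment of the case $i_g = 0$ is correct and uses exactly the paper's ingredients: Proposition \ref{prop:topint} places the strict transforms of the $V(f_k)$, $k<g$, away from that of $V(f)$, so Lemma \ref{lem:compar} gives $\nu_E(T_\alpha) = I_O(V(f),V(T_\alpha))$, and Proposition \ref{prop:maintool} shows those intersection numbers are pairwise distinct, yielding a unique minimizer and hence no cancellation. For the general case, however, there is a real gap: the crucial non-cancellation claim among the pieces $\overline{f}^j\overline{h_j}$ is ``deduced'' by citing Spivakovsky's Thm.~8.6 or MacLane--Vaqui\'e key-polynomial theory, but, as the paper remarks immediately before the lemma, Spivakovsky's theorem already \emph{is} this lemma, and the defining feature of a complete key-polynomial sequence is precisely that the expansion computes the valuation. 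Appealing to either is circular --- it restates the result rather than proving it.

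The gap is closed in the paper with the same intersection-number tool you used in the base case, applied by contradiction. Suppose $\nu_E(h) > \eta$, the right-hand minimum of (\ref{eq:eqE}); since $\nu_E$ is a valuation, at least two terms realize $\eta$, their sum still has $\nu_E$-value $>\eta$, and one may replace $h$ by that sum so that \emph{every} remaining term has value $\eta$. After dividing by the largest common power $f^m$, all terms have value $\eta-m\nu_E(f)$ and at least one has $i_g=0$; by your base case there is \emph{exactly} one such term $T_0$ (their $I_O$-values are pairwise distinct), while every term with $i_g>0$ satisfies $I_O(V(f),V(\cdot))=+\infty$ because $f$ divides it. Hence $I_O(V(f),V(h)) = I_O(V(f),V(T_0)) = \eta - m\nu_E(f)$, and Lemma \ref{lem:compar} forces $\nu_E(h)\le \eta-m\nu_E(f)$, a contradiction. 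So the ``cancellation-free structure'' you were looking for is supplied not by deeper key-polynomial machinery but by the auxiliary non-Archimedean function $g\mapsto I_O(V(f),V(g))$ together with Proposition \ref{prop:maintool}; your decomposition $h=\sum_j f^j h_j$ can be completed the same way, by restricting to the $j$'s achieving the minimum, dividing out $f^{\min j}$, and applying the bound $\nu_E \le I_O(V(f),V(\cdot))$.
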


\begin{proof}
      We reason   by contradiction, assuming that: 
            \begin{equation} \label{eq:contrad} 
                   \nu_E(h) > \eta
            \end{equation} 
         where $\eta$ denotes  the right-hand-side of (\ref{eq:eqE}). As $\nu_E$ 
         is a valuation, this implies that there are at least two terms which realize that 
         minimum, and that the sum of such terms has $\nu_E$-value strictly bigger than $\eta$. 
         Replacing $h$ by this sum, we may therefore assume that:
            \begin{equation}  \label{eq:eqall}
                 \nu_E(a_{i_0, ..., i_g} f_0^{i_0} \cdots f_g^{i_g} ) = \eta,  \: \:  \forall \: \: 
                     (i_0, ..., i_g) \in J(h). 
            \end{equation}
          Dividing $h$ by the highest power of $f = f_g$ appearing in the expansion, 
          we may assume that there are terms in it with $i_g =0$. For such terms, 
          Lemma \ref{lem:compar} implies that: 
             \begin{equation} \label{eq:twoval}
                    \eta =  \nu_E(a_{i_0, ..., i_{g-1}, 0} f_0^{i_0} \cdots f_{g-1}^{i_{g-1}} ) = 
                     I_0(V(f),V(a_{i_0, ..., i_{g-1}, 0} f_0^{i_0} \cdots f_{g-1}^{i_{g-1}})). 
             \end{equation}
         Indeed, Proposition \ref{prop:topint} shows that the strict transforms on $S$ of the two 
         curves $V(f)$ and \linebreak 
         $V(a_{i_0, ..., i_{g-1}, 0} f_0^{i_0} \cdots f_{g-1}^{i_{g-1}})$ 
         do not intersect. Proposition \ref{prop:maintool} implies that there is only one such 
         term $a_{i_0, ..., i_{g-1}, 0} f_0^{i_0} \cdots f_{g-1}^{i_{g-1}}$ in the expansion 
         of $h$, therefore:
            \begin{equation} \label{eq:twovalbis}
                     I_0(V(f), V(h)) = 
                        I_0(V(f), V(a_{i_0, ..., i_{g-1}, 0} f_0^{i_0} \cdots f_{g-1}^{i_{g-1}})). 
             \end{equation}
          Combining now, in order, formula (\ref{eq:contrad}), Lemma \ref{lem:compar}, 
          formula (\ref{eq:twovalbis}) and formula  (\ref{eq:twoval}), we get the contradiction:
            \[\eta < \nu_E(h) \leq I_0(V(f), V(h)) = I_0(V(f), V(a_{i_0, ..., i_{g-1}, 0} f_0^{i_0} 
                \cdots f_{g-1}^{i_{g-1}})) = \eta.   \]
          The lemma is proved.   
\end{proof}

We are ready to prove our improvement of Proposition 
\ref{prop:val} in the case when both divisors $E$ and $F$ are prime (recall 
from Definition \ref{def:assoc}
that $h_D$ denotes a function associated with the divisor $D$):

\begin{prop}  \label{prop:prime}
    Let $E\neq F$ be different prime divisors above $O$. Denote by $S$ 
    the minimal model of  $E$.  The following statements are equivalent: 
  \begin{enumerate}
     \item $E\leq_\nu F$; 
     \item $\nu_E(h_{E^{(k)}})\leq \nu_F(h_{E^{(k)}})$ for all the prime divisors 
     $(E^{(k)})_{-1 \leq k \leq g}$. 
   \end{enumerate}
\end{prop}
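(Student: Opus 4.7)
The implication $(1)\Rightarrow(2)$ is immediate from the definition of $\leq_\nu$, since each $h_{E^{(k)}}$ is an element of $\calO$. The content of the proposition is the reverse implication, whose proof I would obtain by combining the canonical $(f_{-1},f_0,\dots,f_g)$-adic expansion of Proposition~\ref{prop:maintool} with the description of $\nu_E$ given by Lemma~\ref{lem:val_approx}.

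Fix an arbitrary $h\in\calO$ and expand it as in Proposition~\ref{prop:maintool}:
\[
   h=\sum_{(i_0,\dots,i_g)\in J(h)} a_{i_0,\dots,i_g}\,f_0^{i_0}\cdots f_g^{i_g},
\]
with $a_{i_0,\dots,i_g}\in\CC\{x\}$. Factoring further $a_{i_0,\dots,i_g}=x^{i_{-1}}u$ with $u$ a unit of $\CC\{x\}$, each non-zero term of the expansion is a monomial $\mu=u\,f_{-1}^{i_{-1}}f_0^{i_0}\cdots f_g^{i_g}$. By Proposition~\ref{prop:topint}, every $f_k$, including $f_{-1}:=x$ and $f_g:=f$, defines a curvetta of the prime divisor $E^{(k)}$ in the minimal model $S$ of $E$, so $f_k$ is a legitimate choice of the associated function $h_{E^{(k)}}$. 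Hypothesis (2) therefore specializes to the $g+2$ inequalities
\[
   \nu_E(f_k)\leq \nu_F(f_k),\qquad k=-1,0,\dots,g. \qquad(\star)
\]

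Now the hypothesis that $E$ and $F$ are both prime is essential: it guarantees that $\nu_E$ and $\nu_F$ are honest divisorial valuations, hence multiplicative and vanishing on units. Consequently, for each monomial $\mu$ appearing in the expansion,
\[
   \nu_E(\mu)=\sum_{k=-1}^{g} i_k\,\nu_E(f_k)\leq \sum_{k=-1}^{g} i_k\,\nu_F(f_k)=\nu_F(\mu),
\]
by $(\star)$. Applying Lemma~\ref{lem:val_approx} to $h$ gives $\nu_E(h)=\min_\mu \nu_E(\mu)$, while the ultrametric inequality for the valuation $\nu_F$ gives $\nu_F(h)\geq \min_\mu \nu_F(\mu)$. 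Chaining these,
\[
   \nu_E(h)=\min_\mu \nu_E(\mu)\leq \min_\mu \nu_F(\mu)\leq \nu_F(h),
\]
which is condition (1).

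The only point that needs minor care is ensuring that hypothesis (2), stated for generic associated functions $h_{E^{(k)}}$, applies to the specific curvettas $V(f_k)$. This should follow from Lemma~\ref{lem:compar} together with the fact that the $\nu_F$-value of an associated function of $E^{(k)}$ is insensitive to the choice of curvetta once one passes to the minimal model of $F$. Apart from this bookkeeping, the argument is a direct reduction to the multiplicativity of valuations once the canonical expansion and Lemma~\ref{lem:val_approx} are in place; there is no real combinatorial obstacle since Propositions~\ref{prop:maintool}--\ref{prop:topint} already encode everything one needs about the divisors $E^{(k)}$.
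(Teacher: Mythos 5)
Your argument is essentially the same as the paper's: both reduce (2) to the finite set of inequalities $\nu_E(f_k)\leq\nu_F(f_k)$ for the approximate roots $f_{-1}=x,f_0,\dots,f_g$, expand an arbitrary $h$ $(f_0,\dots,f_g)$-adically via Proposition~\ref{prop:maintool}, apply Lemma~\ref{lem:val_approx} to compute $\nu_E(h)$ as a minimum over terms, and use the ultrametric inequality for the valuation $\nu_F$ to close the chain. The only cosmetic difference is that you factor $a_{i_0,\dots,i_g}=x^{i_{-1}}u$ to present each term as a monomial in $f_{-1},\dots,f_g$ times a unit, whereas the paper keeps the coefficient and writes its contribution as $\ord_x(a_{i_0,\dots,i_g})\cdot\nu_F(x)$; these are the same computation.
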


\begin{proof}
   The implication $(1)\Rightarrow(2)$ is an immediate consequence of the definition 
    of the valuative partial order. 
    
    Let us prove that $(2)\Rightarrow(1)$.  A function associated with $E$ can be chosen 
    so that it is a Weierstrass polynomial $f$. Let 
        $(f_k)_{0 \leq k \leq g}$ be its approximate roots. By Proposition 
        \ref{prop:topint}, each $f_k$ is a function associated with $E^{(k)}$. Our hypothesis 
        may therefore be rewritten as:
          \begin{equation} \label{eq:rewrite}
                 \nu_E(f_k)\leq \nu_F(f_k), \: \: \forall \: \: k \in \{-1 \leq k \leq g\}. 
          \end{equation}
      Take any function $h \in \mathcal{O}$, which is identified to 
      $\CC\{x,y\}$ by the choice of local coordinates $(x,y)$. By Weierstrass' 
      preparation theorem, it is equal to a polynomial in $\CC\{x\}[y]$ multiplied 
      by a unit. Therefore we may assume that $h \in \CC\{x\}[y]$. Expand 
      it $(f_0, ..., f_g)$-adically (see Proposition \ref{prop:maintool}):
             \[ h = \sum_{(i_0, ..., i_g) \in J(h)} a_{i_0, ..., i_g} f_0^{i_0} \cdots f_g^{i_g}. \]
       As $F$ is prime, $\nu_F : \mathcal{O} \to \NN \cup \{ + \infty \}$ is a valuation, 
       therefore:
          \begin{equation} \label{eq:ineqF}
             \nu_F(h) \geq \min \{ \nu_F(a_{i_0, ..., i_g} f_0^{i_0} \cdots f_g^{i_g} ) \: \:  
                   | \: \:   (i_0, ..., i_g) \in J(h)  \}.
          \end{equation}
      The coefficients $a_{i_0, ..., i_g}$ being moreover series in the variable $x$ alone, 
      one deduces also the following equality from the fact that $\nu_F$ is a valuation:
         \[ \nu_F(a_{i_0, ..., i_g} f_0^{i_0} \cdots f_g^{i_g} ) = 
               \ord_x( a_{i_0, ..., i_g}) \cdot  \nu_F(x) + 
                      \sum_{k=0}^g i_k \cdot \nu_F(f_k). \] 
         As $E$ is also prime, one has the analogous equality obtained by 
         replacing $F$ by $E$. 
        Therefore, our hypothesis (\ref{eq:rewrite}) implies that (recall that $f_{-1}=x$):
           \[ \nu_F(a_{i_0, ..., i_g} f_0^{i_0} \cdots f_g^{i_g} )  \geq \nu_E(a_{i_0, ..., i_g} f_0^{i_0} 
           \cdots f_g^{i_g} ), \:  \:  \forall \:  \:  (i_0, ..., i_g) \in J(h), \] 
        from which we get:
          \[\min \{ \nu_F(a_{i_0, ..., i_g} f_0^{i_0} \cdots f_g^{i_g} ) \: \:  
                   | \: \:   (i_0, ..., i_g) \in J(h)  \} \geq \min \{ \nu_E(a_{i_0, ..., i_g} f_0^{i_0} 
                      \cdots f_g^{i_g} ) \: \:    | \: \:   (i_0, ..., i_g) \in J(h)  \}. \]
         
            By combining Lemma \ref{lem:val_approx} 
            with the inequality (\ref{eq:ineqF}), we would deduce 
            that $\nu_F(h) \geq \nu_E(h)$. This inequality being true for any $h \in \mathcal{O}_0$, 
            we would get the desired conclusion $\nu_F \geq_{\nu} \nu_E$.  
\end{proof}

Let $[(U_j)_0^k;(\tilde{\beta}_j)_0^k]$ be a sequence of key polynomials (a SKP as defined in \cite{VT}) of length $k$ with $1\leq k<\infty$. By \cite[Chap. 2, Thm. 2.8]{VT}, since the sequence 
is finite, there exists a prime divisor $E$
such that it is the sequence of key polynomials associated with the valuation $\nu_E$. 
The polynomials of the sequence coincide with the approximate roots of the last polynomial of the sequence. Interpreted in this language, the previous result complements 
\cite[Chap. 2, Thm. 2.8]{VT} in the following sense:

\begin{cor}\label{cor:key}
Let $[(U_j)_0^k;(\tilde{\beta}_j)_0^k]$ be a sequence of key polynomials 
(a SKP as defined in \cite{VT}) of length $k$ with $1\leq k<\infty$ and  $\tilde{\beta}_k\in \QQ$. 
Then there exists a unique divisorial centered valuation $\nu_k$ satisfying
\begin{itemize}\item[(Q1)] $\nu_k(U_j) = \tilde{\beta}_j$ for $0 \leq j \leq k$;
\item[(Q2')] $\nu_k\leq \nu$ for any divisorial valuation $\nu$ satisfying 
$\nu(U_j) \geq \tilde{\beta}_j$ for $0 \leq j \leq k$.
\end{itemize}
\end{cor}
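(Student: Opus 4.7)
My plan is to deduce the corollary as a direct reformulation of Proposition \ref{prop:prime} combined with the cited result \cite[Chap. 2, Thm. 2.8]{VT}. For existence I would invoke \cite[Chap. 2, Thm. 2.8]{VT} to produce a prime divisor $E$ whose associated SKP is $[(U_j)_0^k;(\tilde{\beta}_j)_0^k]$, and set $\nu_k:=\nu_E$; condition (Q1) then holds by the very definition of this correspondence.

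For (Q2'), I would let $\nu=\nu_F$ be any divisorial centered valuation with $\nu_F(U_j)\geq \tilde{\beta}_j = \nu_E(U_j)$ for every $0\leq j \leq k$. As recalled in the paragraph preceding the corollary, the key polynomials $U_0,\dots,U_k$ coincide with the approximate roots of the last polynomial $U_k$; the latter is a Weierstrass polynomial defining a curvetta of $E$, and the transverse coordinate $x$ plays the role of the indexing root $f_{-1}$. By Proposition \ref{prop:topint}, each $U_j$ is then a function associated with one of the divisors $E^{(-1)},E^{(0)},\dots,E^{(g)}=E$ appearing in Proposition \ref{prop:prime}. The hypothesis on $\nu_F$ becomes verbatim condition (2) of Proposition \ref{prop:prime}, whose equivalence with condition (1) yields $\nu_E \leq_\nu \nu_F$, that is, $\nu_k \leq \nu$.

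Uniqueness would then follow formally from (Q2') and the antisymmetry of $\leq_\nu$. Given a second valuation $\nu_k'$ satisfying (Q1) and (Q2'), the equality $\nu_k'(U_j)=\tilde{\beta}_j$ allows me to apply (Q2') for $\nu_k$ with $\nu=\nu_k'$, obtaining $\nu_k \leq_\nu \nu_k'$; swapping the roles of the two valuations gives the reverse inequality, and Lemma \ref{lem:antisym} forces $\nu_k=\nu_k'$.

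The only delicate point I expect is bookkeeping: one has to match the indexing of Vaqui\'e's SKP with that of the approximate roots $f_{-1}=x, f_0,\dots,f_g$ used in Section \ref{sec:speprime}, in particular accounting for the transverse coordinate which is encoded as one of the $U_j$'s on Vaqui\'e's side but sits outside the indexing $0\leq k \leq g$ on the other side. Once this dictionary is fixed, no new analytic input is required, and the corollary reduces to a translation of Proposition \ref{prop:prime} into the language of sequences of key polynomials.
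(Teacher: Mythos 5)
Your proposal reconstructs exactly the argument the paper intends: existence from \cite[Chap.~2, Thm.~2.8]{VT} and the identification of the key polynomials with the approximate roots of $U_k$, minimality (Q2') from the implication $(2)\Rightarrow(1)$ of Proposition~\ref{prop:prime}, and uniqueness from antisymmetry of $\leq_\nu$ (Lemma~\ref{lem:antisym}). The indexing caveat you flag (matching $U_0,\dots,U_k$ with $f_{-1}=x,f_0,\dots,f_g$) is exactly the right bookkeeping concern, and the paper glosses over it in the same way.
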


\subsection{The valuative criterion and deformations}\label{sec:def}

$\:$ 

\medskip

Some of the results in this section reinterpret and generalize results of  
\cite{AR} in terms of the valuative criterion.

We begin by explaining some notations used in the sequel.

\begin{notation}\label{not:def} 
      A germ of holomorphic mapping: $$H:(\CC^3,0)\to (\CC,0)$$ 
$$(x,y,s)\to H(x,y,s)$$ such that $H(s, 0, 0) = 0$ 
is said to be a \emph{deformation} of the plane curve $H(x,y,0)=0$, which is called the \emph{special curve} of the deformation. We denote $h_s(x,y)=H(s,x,y)$. It is known that there exists  
a representative $\Lambda$ of $(\CC,0)$ such that for $s\in\Lambda\setminus\{0\}$, 
the topological type of the curves $(V(h_s),0)$ is constant.  
We call them the \emph{generic curves} of the deformation. 
We denote the deformation by  $(h_s)_{s\in\Lambda}$ 
or simply by $(h_s)_{s}$. We say that $\Lambda$ is the \emph{space of parameters} 
and in this context, we will always assume that $s\in\Lambda$.   
\end{notation}

\begin{notation}\label{not:def2}
Let $f: (\CC^2, 0) \to (\CC, 0)$ be a holomorphic function germ 
and let 
$S$ be a model of $(\CC^2,0)$. Let $Exc_S=\bigcup_iD_i$ 
be the decomposition into irreducible components of its exceptional set. 
We say that $H_f=\sum c_iD_i$ is the \emph{divisor associated} with $f$ in the model 
$S$ if $c_i= \widetilde{V(f)}\cdot D_i$ for all $i$, where $\widetilde{V(f)}$ 
denotes the strict transform of $V(f)$ on $S$.

In particular, given a deformation $(g_s)_s$, we can define $E:=H_{g_s}$ for $s\neq 0$ small enough, since it does 
not depend on this choice. 
    
\end{notation}
As a first result relating deformations and the valuative criterion, we have the following:

\begin{prop}\label{lem:val_def1} 
     Let $G=(g_s)_s$ be a deformation. Let $S$ be any model and let $F:=H_{g_0}$ and 
     $E:=H_{g_s}$ (with $s \neq 0$) as in Notation \ref{not:def2}.  
     Then $\nu_E\leq\nu_F$.
\end{prop}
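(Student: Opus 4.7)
My plan would be to reduce the inequality of valuations on all of $\calO$ to a coefficient-wise comparison between $E$ and $F$, and then establish that coefficient-wise comparison by a semicontinuity argument applied to the total family.

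First I would observe that, writing $E = \sum_i a_i D_i$ and $F = \sum_i b_i D_i$ with $a_i = \nu_{D_i}(g_s)$ (for generic small $s$) and $b_i = \nu_{D_i}(g_0)$, the conclusion $\nu_E \le \nu_F$ is equivalent to
\[
\sum_i a_i \, \nu_{D_i}(h) \;\le\; \sum_i b_i \, \nu_{D_i}(h) \quad \text{for all } h \in \calO.
\]
Since each $\nu_{D_i}(h) \geq 0$, it is therefore sufficient to prove the pointwise inequality $a_i \le b_i$ for every component $D_i$ of $Exc_S$. This reduction is the first key step and it sidesteps the need to invoke Proposition~\ref{prop:val}.

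Next, to prove $a_i\le b_i$, I would pass to the total family. Regard $G$ as a holomorphic germ on $(\CC^2\times \Lambda,0)$ and pull it back via $\pi\times\Id \colon S\times \Lambda \to \CC^2\times \Lambda$ to obtain $\widetilde G := (\pi\times\Id)^*G$ on $S\times\Lambda$. Let $c_i$ denote the order of vanishing of $\widetilde G$ along the codimension-one subvariety $D_i\times\Lambda$. Working in local coordinates near a smooth point of $D_i\times\Lambda$, write $\widetilde G = u^{c_i}\phi(u,v,s)$ where $u=0$ cuts out $D_i\times\Lambda$ and $\phi$ is not identically zero on it. Restricting to the slice $s=s_0$ one sees that the order of $\pi^*g_{s_0}$ along $D_i$ equals $c_i$ exactly when $\phi(0,v,s_0)\not\equiv 0$, and is strictly larger otherwise. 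The locus of bad $s_0$ is a proper analytic subset of $\Lambda$, hence for generic small $s_0\ne 0$ one has $\nu_{D_i}(g_{s_0}) = c_i$, while at $s_0=0$ one has $\nu_{D_i}(g_0) \geq c_i$. This yields $a_i = c_i \le b_i$, which is the required semicontinuity.

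The hardest part of writing this cleanly is the bookkeeping of the ``generic'' condition on $s$ simultaneously for all components $D_i$: one must argue that, after possibly shrinking $\Lambda$, there is a common dense set of parameters for which $\nu_{D_i}(g_s) = c_i$ for every $i$, so that $E = \sum_i c_i D_i$ truly represents the generic fiber divisor. Once this is arranged (and $\Lambda$ is shrunk to a punctured disk where the topological type of $V(g_s)$ is also constant, as in Notation~\ref{not:def}), combining step one and step two gives $\nu_E(h)\le\nu_F(h)$ for every $h\in\calO$, which is exactly $\nu_E\le\nu_F$.
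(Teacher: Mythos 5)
Your step 1 misreads Notation~\ref{not:def2}. The coefficient $c_i$ in $H_f=\sum c_iD_i$ is the intersection number $\widetilde{V(f)}\cdot D_i$ of the \emph{strict} transform with $D_i$, not the vanishing order $\nu_{D_i}(f)$ of the \emph{total} transform along $D_i$; these agree only after a single blow-up, and the paper's own proof makes the intended meaning explicit by setting $b_i:=\widetilde{V(g_0)}\cdot D_i$ and $a_i:=Y_s\cdot D_i$. Under the correct reading, the coefficient-wise inequality $a_i\le b_i$ on which you base your reduction is simply false in general. Take $g_s=y^2-x^3-sx^2$ and let $S$ be obtained by blowing up $O$ and then the point of $E_O$ in the direction $y=0$, so that $Exc_S=D_0\cup D_1$. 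For $s\neq 0$ the node $V(g_s)$ has two branches meeting $D_0$ away from $D_1$, giving $\widetilde{V(g_s)}\cdot D_0=2$ and $\widetilde{V(g_s)}\cdot D_1=0$; for $s=0$ the cusp lifts to the corner $D_0\cap D_1$, giving $\widetilde{V(g_0)}\cdot D_0=1$ and $\widetilde{V(g_0)}\cdot D_1=1$. So $a_0=2>1=b_0$, although $\nu_E=2\nu_{D_0}\le\nu_{D_0}+\nu_{D_1}=\nu_F$ does hold. Thus the claimed sidestep of Proposition~\ref{prop:val} fails.

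What your semicontinuity argument in step 2 actually proves is $\nu_{D_i}(g_s)\le\nu_{D_i}(g_0)$ for all $i$, and this part is correct; your passage to the total family on $S\times\Lambda$ is essentially the same device as the paper's, expressed through vanishing orders rather than intersection numbers, and your discrepancy $\nu_{D_i}(g_0)-c_i$ equals the paper's coefficient $d_i$ of $D_i$ in $Y_0$. Since $g_s$ and $g_0$ are weakly associated with $E$ and $F$, Lemmas~\ref{lem:weak1} and~\ref{lem:exchange} give $\nu_{D_i}(g_s)=\nu_E(h_{D_i})$ and $\nu_{D_i}(g_0)=\nu_F(h_{D_i})$; so you have verified exactly condition~(4) of Proposition~\ref{prop:val}. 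Passing from these finitely many inequalities to $\nu_E\le\nu_F$ on all of $\calO$ is precisely the content of the implication $(4)\Rightarrow(1)$ there, which is genuinely nontrivial (it uses the proximity structure of the blow-up tree) and cannot be omitted. Your argument must invoke or reprove that implication to close.
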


\begin{proof} 
We proceed as in \cite[Section 3.2]{Nash_surfaces}. We use Notation  \ref{not:def2}.

Consider the mapping: 
    $$\sigma:=\pi\ \times\ Id_\Lambda:S\times\Lambda\to \CC^2\times \Lambda$$ 
 where $(\Lambda,0)\contneq (\CC,0)$ is the space of parameters $s$ of the deformation.  

Let $Y$ be the strict transform of $V(G)$ in $S\times \Lambda$ by $\sigma$. 
Denote $Y_s:=Y\cap(S\times\{s\})$. Observe that: 
     $$Y_s=\widetilde{V(g_s)}\ \ for\  s\neq 0$$ 
    $$Y_0=\widetilde{V(g_0)}+\sum_kd_kD_k, \ \ \ with \ \ d_k\geq 0.$$ 
Due to the invariance of the intersection number under deformation, we have the system of
equations: 
    \begin{equation}\label{eq:4}Y_0\cdot D_i=Y_s\cdot D_i\end{equation} 
for any $i$. 
If we define $b_i:=\widetilde{V(g_0)} \cdot D_i$ and $a_i:=Y_s \cdot D_i$, 
then the system (\ref{eq:4}) can be rewritten as follows using the intersection matrix 
$M=(D_i\cdot D_j)_{i,j}$: 
\begin{equation}
       (b_0,b_1,...,b_n)^t+M(d_1,..,d_n)^t=(a_0,...,a_n)^t
\end{equation}
or equivalently:
\begin{equation}\label{eq:8}
        -M^{-1}(b_0-a_0,...,b_n-a_n)^t=(d_1,...,d_n)^t\geq 0.
\end{equation}
The entries of the matrix $-M^{-1}$ are exactly 
$\nu_{D_{i}}(h_{D_j})=I_0(V(h_{D_i}),V(h_{D_j}))$ (see Lemma \ref{lem:compar}), 
and since $d_i\geq 0$ for all $i$, the inequality (\ref{eq:8}) says exactly  
$\nu_{E}(h_{D_i})\leq \nu_{F}(h_{D_i})$ for all $i$. Then, using the implication
$(4) \Rightarrow (1)$  of Proposition \ref{prop:val}, we conclude that $\nu_{E}\leq \nu_{F}$ .  
\end{proof}

Proposition \ref{prop:val_def2} below is a kind of reciprocal of the previous one. 
In particular, it shows that with the hypothesis of the valuative inequality we can control 
the generic member of a pencil of curves. For the case of prime divisors and 
$V(\widetilde{g})$ a curvetta for $E$, this was proved in \cite{AR}. 

To state Proposition \ref{prop:val_def2} in the maximal generality,  we need the following 
definition, which weakens Definition \ref{def:assoc}, in the sense that one does not impose 
now any transversality hypothesis:

\begin{defn} 
   Let $F=\sum_ib_iD_i$ be a divisor in a model $S$ of $(\CC^2,0)$ with 
   $Exc_S=\bigcup_iD_i$. We say that a function $f$ is \emph{weakly associated with 
   $F$ in $S$} if $b_i = \nu_{D_i}(f)= \widetilde{V(f)}\cdot D_i$ for all $i$.
\end{defn}

We need also the following lemma: 

\begin{lem}\label{lem:weak1} 
     Let $h$ be a function weakly associated with a divisor $F= \sum b_i D_i$ 
     in a model $S$. Then $\nu_{D_i}(h)$ does not depend on the choice of $h$, 
     for any prime component $D_i$ of $Exc_S$. 
 
      As a consequence, let $S$ be obtained by blowing up points $\{x_i\}_{i\in I}$ and 
      $\tilde{h}_i$   denote the local equation of the strict transform of $h$ at $x_i$. If $h$ 
      is weakly associated with $F$, then $m_{x_i}(V(\tilde{h}_i))$ does not depend on 
      the choice of $h$ for all $i\in I$. 
\end{lem}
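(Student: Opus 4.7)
The plan is to exploit the invertibility of the intersection matrix $M=(D_i\cdot D_j)_{i,j}$ of $Exc_S$, which is negative definite (and in particular invertible) by the standard theorem on the intersection form of exceptional divisors of birational morphisms between smooth surfaces. The argument then becomes a short piece of linear algebra, followed by the proximity formula for the second claim.

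For the first assertion, I would show that the vector $(\nu_{D_j}(h))_j$ is determined by the intersection vector $(\widetilde{V(h)}\cdot D_i)_i$ alone. Indeed, decomposing the total transform as
\[
    \pi^{*}(V(h)) \;=\; \widetilde{V(h)} \;+\; \sum_j \nu_{D_j}(h)\, D_j,
\]
and using the projection formula together with $\pi_{*}D_i=0$ to conclude $\pi^{*}(V(h))\cdot D_i=0$, one obtains the linear system
\[
    \widetilde{V(h)}\cdot D_i \;+\; \sum_j \nu_{D_j}(h)\,(D_j\cdot D_i)\;=\;0, \qquad \text{for all } i.
\]
This is a linear system in the unknowns $\nu_{D_j}(h)$ with coefficient matrix $M$. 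Since $M$ is invertible, its solution is uniquely determined by the right-hand side. When $h$ is weakly associated with $F$, the hypothesis gives $\widetilde{V(h)}\cdot D_i=b_i$, which depends only on $F$; hence $\nu_{D_j}(h)$ depends only on $F$ and $S$, not on the particular choice of $h$.

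For the second assertion, I would invoke the proximity formula: if $D_i$ is the prime divisor created by blowing up the point $x_i$, then
\[
    \nu_{D_i}(h) \;=\; m_{x_i}(V(\tilde{h}_i)) \;+\; \sum_{D_j \,\ni\, x_i}\nu_{D_j}(h),
\]
where the sum ranges over the components $D_j$ of the intermediate exceptional divisor passing through $x_i$ (equivalently, the components to which $D_i$ is proximate). This identity follows inductively from the single-blow-up identity: blowing up a point of multiplicity $m$ produces an exceptional divisor contributing $m$ to the total transform of the strict transform, to which one must add the multiplicities inherited from the pullbacks of the previously created exceptional components through $x_i$. Solving for $m_{x_i}(V(\tilde{h}_i))$ and inserting the first part, which fixes every $\nu_{D_j}(h)$ from $F$ alone, yields the claim.

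The only step requiring any care is the bookkeeping of sign conventions for $M$ (negative definite, hence invertible) and the bookkeeping of proximities in the blow-up sequence; conceptually there is no obstacle, since this is exactly the linear system already exploited in the proof of Proposition~\ref{lem:val_def1}, now interpreted as a \emph{determinacy} statement rather than a positivity statement.
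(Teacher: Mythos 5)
Your argument is correct, and for the first assertion it takes a route genuinely different from the paper's. The paper proves that $\nu_{D}(h)$ depends only on the $b_i$'s by writing $\nu_D(h) = I_O(V(h),V(h_D))$ for a curvetta $h_D$ whose strict transform avoids $\widetilde{V(h)}$, then expanding $I_O(V(h),V(h_D)) = \pi^*(h_D)\cdot\widetilde{V(h)} = \sum_k \nu_{D_k}(h_D)\,b_k$; the coefficients $\nu_{D_k}(h_D)$ depend only on $D$ and $S$. You instead view $\pi^*(V(h))\cdot D_i = 0$ as a linear system $M\,\bigl(\nu_{D_j}(h)\bigr)_j = -(b_i)_i$ and invoke the invertibility of the exceptional intersection matrix $M$ to conclude determinacy. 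The two are dual: the entries of $-M^{-1}$ are exactly the numbers $\nu_{D_i}(h_{D_j}) = I_O(V(h_{D_i}),V(h_{D_j}))$ (as the paper itself notes near equation~(\ref{eq:8})), so your inversion of $M$ reproduces the paper's explicit expansion. Your phrasing makes the determinacy transparent, while the paper's gives an explicit formula with a geometric interpretation. For the second assertion your argument is identical to the paper's: the proximity formula $m_{x_i}(V(\tilde h_i)) = \nu_{D_i}(h) - \sum_{D_j \ni x_i}\nu_{D_j}(h)$ combined with the first part.
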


\begin{proof} 
Let $h$ be a holomorphic function weakly associated with $F$ in $S$.
We know that $\nu_D(h)=I_0(h, h_D)$ for some associated function $h_D$ to $D$ whose strict transform in $S$ does not meet the strict transform of $h$. 
But we have the equality $I_0(h,h_D)=\pi^*(h_D)\cdot \tilde{h}$, which only depends on the intersection products $b_i:=\widetilde{V(h)}\cdot D_i$. 
 
    Let us prove the consequence. Let $\{D_j\}_{j\in J}$ be the prime divisors meeting $x_i$ 
   at the minimal model where $x_i$ appears. We just observe the equality:
   $$m_{x_i}(V(\widetilde{h}_i))=\nu_{D_i}(h)-\sum_{j\in J}\nu_{D_j}(h).$$ 
\end{proof}

\begin{prop}\label{prop:val_def2}
    Let $E=\sum_ia_iD_i$, $F=\sum_ib_iD_i$ be divisors such that $\nu_E\leq \nu_F$. 
    Let $V(g)$ and $V(f)$ be curves weakly associated with $E$ and $F$ in the minimal 
    model $S$ of $E+F$. 
    Then $V(f+sg)$ are weakly associated with $E$ in $S$ for all $s\neq 0$ small enough. 

    Moreover, if $g$ is associated with $E$ then so is $V(f+sg)$ for $s\neq 0$ small enough.  
\end{prop}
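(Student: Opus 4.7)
The strategy is to split the two assertions: the weakly-associated statement is proved by a direct valuation computation combined with the numerical identity used in Proposition~\ref{lem:val_def1}, while the associated statement requires an explicit local analysis of the strict transform at each smooth point and each crossing of $Exc_S$.

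\textbf{Weakly associated part.} Set $e_i := \nu_{D_i}(g)$ and $f_i := \nu_{D_i}(f)$; by Lemma~\ref{lem:weak1} these integers depend only on $E$ and $F$ (not on the particular weakly associated choices). The equivalence $(1)\Leftrightarrow(3)$ of Proposition~\ref{prop:val} translates the hypothesis $\nu_E\leq\nu_F$ into $e_i\leq f_i$ for every prime component $D_i$ of $Exc_S$. Each $\nu_{D_i}$ being a valuation, $\nu_{D_i}(f+sg)\geq \min(f_i,e_i)=e_i$ for $s\neq 0$, with strict inequality only if the $D_i$-initial forms of $f$ and $sg$ cancel as sections on $D_i$; such a cancellation can occur only when $f_i=e_i$ and only for the single value $s=-\tilde f|_{D_i}/\tilde g|_{D_i}$, which is a nonzero constant (since $\tilde f|_{D_i}\not\equiv 0$ and the ratio must be constant for cancellation to be possible on all of $D_i$). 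Hence for $|s|$ small enough and nonzero one has $\nu_{D_i}(f+sg)=e_i$ for every $i$ simultaneously. The identity
\[ \widetilde{V(f+sg)}\cdot D_j + \sum_i \nu_{D_i}(f+sg)(D_i\cdot D_j) = 0, \]
which follows from $\pi^*V(f+sg)\cdot D_j=0$ exactly as in the proof of Proposition~\ref{lem:val_def1}, then yields $\widetilde{V(f+sg)}\cdot D_j = -(Me)_j = a_j$, so $V(f+sg)$ is weakly associated with $E$.

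\textbf{Associated part.} Now assume $g$ is associated with $E$, so $\widetilde{V(g)}$ meets each $D_j$ transversely at $a_j$ distinct smooth points $p^{(j)}_1,\dots,p^{(j)}_{a_j}$ of $Exc_S$. Near a smooth point $p\in D_j$, choose local coordinates $(u,v)$ on $S$ with $D_j=\{u=0\}$; using $\pi^*f=u^{f_j}\tilde f(u,v)$ and $\pi^*g=u^{e_j}\tilde g(u,v)$, a local equation of the strict transform is
\[ u^{f_j-e_j}\tilde f(u,v)+s\tilde g(u,v)=0. \]
When $p=p^{(j)}_k$ and $f_j>e_j$, one may take $\tilde g=v\cdot(\mathrm{unit})$, and the $v$-partial of the equation at $(0,0)$ equals $s\cdot(\mathrm{unit})(0,0)\neq 0$ for $s\neq 0$: the strict transform is smooth and, by a direct tangent vector calculation, transverse to $D_j$ at $p$. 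When $f_j=e_j$ the intersections near $p^{(j)}_k$ disappear but are replaced by clusters of $m_p$ distinct simple roots near each $p\in\widetilde{V(f)}\cap D_j^\circ$ (with $m_p=I_p(\widetilde{V(f)},D_j)$), coming from the asymptotic equation $(v-v_p)^{m_p}\approx -s\tilde g(0,v_p)/h_1(v_p)$ in a local factorization $\tilde f(0,v)=(v-v_p)^{m_p}h_1(v)$. A parallel calculation at a crossing $q=D_i\cap D_j$, starting from the local equation $u^{f_j-e_j}v^{f_i-e_i}\tilde f+s\tilde g=0$, shows that $f_i-e_i$ smooth transverse intersections detach from $q$ along $D_j$ at distance of order $|s|^{1/(f_i-e_i)}$, and symmetrically along $D_i$. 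Collecting contributions and using the relation $a_j-b_j=\sum_{i:D_i\cap D_j\ne\emptyset}(f_i-e_i)$ that follows from $a=-Me$ and $b=-Mf$, the totals on $D_j$ are exactly $a_j$, matching the count from Part~1. For $s$ sufficiently small, the clusters sit near pairwise distinct points of $Exc_S$, so the $a_j$ intersections are distinct smooth transverse ones, proving that $V(f+sg)$ is associated with $E$.

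\textbf{Main obstacle.} The delicate step is the smoothness, transversality and distinctness of the intersections arising from crossings and from non-transverse intersections of $\widetilde{V(f)}$ with $D_j$: since $f$ is only weakly associated with $F$, its strict transform may be singular or tangent to $Exc_S$, and one must verify that the perturbation by $sg$ genuinely splits such configurations into smooth transverse branches. The key is that in each region where the splitting is required one has $e_i<f_i$, so that $s\tilde g$ is the dominant term of the local equation, and the regularity of $\widetilde{V(g)}$ (granted by the full ``associated'' hypothesis, not merely weak association) then forces the smoothness and transversality of the resulting branches.
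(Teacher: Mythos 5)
Your proof of the weakly-associated half is correct and takes a genuinely different, more compact route than the paper's. The paper runs an induction over the infinitely near points $x_i$, showing $m_{x_i}(\widetilde{(f+sg)}_i)=m_{x_i}(\widetilde{g}_i)$ step by step and then assembling these multiplicities into the intersection numbers $\widetilde{V(f+sg)}\cdot D_j$. You instead pass directly from $e_i\leq f_i$ (correctly extracted from Proposition~\ref{prop:val}(3) via Lemma~\ref{lem:weak1}) to $\nu_{D_i}(f+sg)=e_i$ for $s\neq 0$ small, using the ultrametric inequality for the valuations $\nu_{D_i}$ and the observation that cancellation of initial forms singles out at most one nonzero $s$ per component; the relation $\pi^*V(f+sg)\cdot D_j=0$ then gives $\widetilde{V(f+sg)}\cdot D_j=-(Me)_j=a_j$ at once. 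This is a legitimate simplification.

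The ``associated'' half has genuine gaps. The paper's own argument is a short genericity observation on the pencil $\lambda f+\mu g$: the intersection number with each $D_i$ is constant (equal to $a_i$) across almost all members, the member $g=(0:1)$ has $a_i$ distinct transverse intersections with $D_i$, and this is an open condition, so the generic member (in particular $f+sg$ for generic small $s\neq 0$) also does. You replace this by explicit local asymptotics, and several cases are not covered. (i) At a crossing $q=D_i\cap D_j$ you implicitly assume $\tilde f(q)\neq 0$, but $f$ is only \emph{weakly} associated with $F$, so $\widetilde{V(f)}$ may pass through $q$; then the root count $f_i-e_i$ near $q$ on $D_j$ is wrong (it becomes $f_i-e_i+I_q(\widetilde{V(f)},D_j)$). (ii) When $f_j=e_j$, the analysis near a smooth point $p$ of $D_j$ where \emph{both} $\widetilde{V(f)}$ and $\widetilde{V(g)}$ pass is excluded by your hypothesis $\tilde g(0,v_p)\neq 0$. (iii) The identity $a_j-b_j=\sum_{i:D_i\cap D_j\neq\emptyset}(f_i-e_i)$ is not what $a=-Me$, $b=-Mf$ give; the correct form is $a_j-b_j=D_j^2(f_j-e_j)+\sum_{i\neq j,\,D_i\cap D_j\neq\emptyset}(f_i-e_i)$, which coincides with yours only when $f_j=e_j$. (iv) Your closing claim that ``in each region where the splitting is required one has $e_i<f_i$, so that $s\tilde g$ is the dominant term'' is false: when $f_j=e_j$ and $\widetilde{V(f)}$ is tangent to $D_j$ away from $\widetilde{V(g)}$ the term $\tilde f$ dominates and splitting is still needed. (v) That the emerging clusters remain pairwise distinct and away from the crossings is asserted, not argued. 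None of these objections is fatal to the approach in principle, but as written the count and the transversality/distinctness assertions are not proved; the paper's pencil argument avoids all of this casework and I would recommend adopting it for this part.
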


\begin{proof} 

Let $\{x_i\}_{i\in I}$ be the infinitely near points that we blow up in order to obtain the minimal model of $E$. Here $I$ is partially ordered by saying that $i <j$ precisely when $x_j$ is 
infinitely near $x_i$. We denote $x_0: = 0$ and for any $i >0$ we will denote by 
$x_{i-1}$ the point which is blown up in order to create the minimal model of $x_i$.
Let $E_i$ be the divisor obtained after blowing up $x_i$.  Let $f_i$ be a local 
defining function at $x_i$ of the 
total transform of $f$ and $\widetilde{f}_i$ a local defining function at $x_i$ 
of the strict transform 
of $f$. We use the same notations for $g$ and for $f + sg$. 
Let $\{m_i\}_{i\in I}$ be the multiplicity 
sequence of $g$, that is, $m_i=m_{x_i}(\widetilde{g}_i)$.

Note that by the choice of $f$ and $g$ and the inequality $\nu_E\leq \nu_F$ we have: 
\begin{equation}
\label{ineq-nueva}
    \nu_{E_i}(g)=I_O(h_{E_i},g)=I_O(h_{E_i},h_E)=
      \nu_E(h_{E_i})\leq\nu_F(h_{E_i})=I_O(h_{E_i},h_F)=I_O(h_{E_i},f)=\nu_{E_i}(f)
\end{equation}
for any $E_i$ in $Exc_S$  and for $h_D$ denoting functions associated with the divisor $D$, 
chosen generically relative to $f$ and $g$. 

1. We prove that for $s\neq 0$ the strict transform of $V(f+sg)$ after the 
blow-up of $x_{i-1}$ passes  through $x_i$ and has the same multiplicity as $V(g)$, 
that is, $m_{x_i}(\widetilde{(f+sg)}_i)=m_i$.
 
We proceed by induction on the partial order of $I$.

Initial step at $x_0$. Both $f$ and $g$ go through the origin $x_0$ and $m_{x_0}(f+sg)=m_{x_0}(g)=m_0$ 
for $s\neq 0$ small enough because 
$m_{x_0}(f)\geq m_{x_0}(g)$: the multiplicity of $f+sg$ is different from that of $g$  only  for certain $s$, so it is equal to it if $s\neq 0$ is small enough.  

Let's do in detail the case where $x_i$ is a satellite point and $x_i=E_{i-1}\cap E_j$. 
Assume that $E_{i-1}=V(y)$ and $E_j=V(x)$ for local coordinates $(x,y)$ around $x_i$. 
Then we may write: 
     $$f_i=x^{\nu_{E_j}(f)}y^{\nu_{E_{i-1}}(f)}\widetilde{f}_i,$$
$$g_i=x^{\nu_{E_j}(g)}y^{\nu_{E_{i-1}}(g)}\widetilde{g}_i$$
and so the total transform of $f+sg$ around $x_i$ is:   
$$(f+sg)_i=x^{\nu_{E_j}(f)}y^{\nu_{E_{i-1}}(f)}\widetilde{f}_i+
        s\cdot x^{\nu_{E_j}(g)}y^{\nu_{E_{i-1}}(g)}\widetilde{g}_i=$$
$$=x^{\nu_{E_j}(g)}y^{\nu_{E_{i-1}}(g)}(x^{\nu_{E_j}(f)-\nu_{E_j}(g)}y^{\nu_{E_{i-1}}(f)-
        \nu_{E_{i-1}}(g)}\widetilde{f}_i+s\cdot\widetilde{g}_i)$$
where what is inside the parenthesis is $\widetilde{(f+sg)}_i$. 
If the two strict transforms $V(\widetilde{f}_i)$ and $V(\widetilde{g}_i)$ pass through $x_i$, 
then it is also the case for $V(\widetilde{(f+sg)}_i)$.  If only $V(\widetilde{g}_i)$ 
passes through $x_i$, in order to have that $\widetilde{(f+sg)}_i$ is zero in $x_i$, 
we need to guarantee that $\nu_{E_{i-1}}(f)-\nu_{E_{i-1}}(g)$ and $\nu_{E_{j}}(f)-\nu_{E_{j}}(g)$ 
are not both zero. But if they were both zero, then $\nu_{E_{i-1}}(f)+\nu_{E_{j}}(f)=
      \nu_{E_{i-1}}(g)+\nu_{E_{j}}(g)$ and since $m_i>0$  we would have that:  
$$\nu_{E_{i}}(f)=\nu_{E_{i-1}}(f)+\nu_{E_{j}}(f)<\nu_{E_{i-1}}(g)+\nu_{E_{j}}(g)+m_i=\nu_{E_{i}}(g)$$ 
which cannot be the case by inequality~(\ref{ineq-nueva}). 

Let us show now that the multiplicity of $\widetilde{(f+sg)}_i$ at $x_i$ is 
$m_i=m_{x_i}(\widetilde{g}_i)$. One has
$m_{x_i}\tilde{f}_i=\nu_{E_i}(f)-\nu_{E_{i-1}}(f)-\nu_{E_j}(f)$ and $m_{x_i}\tilde{g}_i=\nu_{E_i}(g)-\nu_{E_{i-1}}(g)-\nu_{E_j}(g)$, therefore the inequality: 
\begin{equation}
\label{eq:m}
       m_{x_i}(x^{\nu_{E_j}(f)-\nu_{E_j}(g)}y^{\nu_{E_{i-1}}(f)-
           \nu_{E_{i-1}}(g)}\widetilde{f}_i)\geq m_{x_i}(\widetilde{g}_i)
\end{equation} 
is equivalent to the inequality $\nu_{E_{i}}(f)\leq \nu_{E_i}(g)$, which we have 
proved before. Moreover,  the equality in (\ref{eq:m}) (and the possible jump of multiplicity 
of $\widetilde{(f+sg)_i}$) 
may only occur for certain isolated values of $s$ 
(and only when we have the equality $\nu_{E_{i}}(f)=\nu_{E_i}(g)$), 
therefore not for $s\neq 0$ small enough.  

The case where $x_i$ is a free point is analogous. 

2. We claim that the fact that $\widetilde{g_i}$ and $\widetilde{(f+sg)_i}$ have the same multiplicity at $x_i$ for any infinitely near point $x_i$ which is blowing up center 
for the model $S$ implies the equality: 
$$\widetilde{V(f+sg)}\centerdot D_i=\widetilde{V(g)}\centerdot D_i$$
for any exceptional divisor $D_i$ in $S$ and $s\neq 0$ small enough. This exactly means that $f+sg$ is weakly associated with $E$ for $s\neq 0$ small enough. 

Let us prove this claim. 
Give a total order to the infinitely near points involved in the blowing up process 
leading to $S$, which is compatible with the natural partial order given by domination.
Let $x_i$ be the blowing up center giving rise to $D_i$ in its minimal model. 
Denote by $\widetilde{\widetilde{V(g)}}_i$
the strict transform of $V(g)$ at the result of blowing up the point $x_i$. 
In a neighbourhood of $D_i$ it coincides with the strict transform of
$V(\widetilde{g_i})$ by the blowing up of $x_i$. Therefore we have the equality: 
     $$\widetilde{\widetilde{V(g)}}_i\centerdot D_i=m_{x_i}(\widetilde{g_i})=
           m_{x_i}(\widetilde{(f+sg)_i})=\widetilde{\widetilde{V(f+sg)}}_i\centerdot D_i.$$

Let $x_j$ be the blowing up center appearing right after $x_i$. If $x_j$ is not located 
in $D_i$, then we have the equalities:
    $$\widetilde{\widetilde{V(g)}}_j\centerdot D_i=\widetilde{\widetilde{V(g)}}_i\centerdot D_i$$
and:
    $$\widetilde{\widetilde{V(f+sg)}}_j\centerdot D_i=
        \widetilde{\widetilde{V(f+sg)}}_i\centerdot D_i.$$

If $x_j$ is on $D_i$, then we have the equalities:
     $$\widetilde{\widetilde{V(g)}}_j\centerdot (D_i+D_j)=
           \widetilde{\widetilde{V(g)}}_i\centerdot D_i=m_{x_i}(\widetilde{g_i}),$$
    $$\widetilde{\widetilde{V(f+sg)}}_j\centerdot (D_i+D_j)=
          \widetilde{\widetilde{V(f+sg)}}_i\centerdot D_i=m_{x_i}(\widetilde{(f+sg)_i}),$$ 
     $$\widetilde{\widetilde{V(g)}}_j\centerdot D_j=m_{x_j}(\widetilde{g_j}),$$ 
     $$\widetilde{\widetilde{V((f+sg)}}_j\centerdot D_j=m_{x_j}(\widetilde{(f+sg)_j}).$$ 
Since we have shown the equality 
     $m_{x_i}(\widetilde{g_i})=m_{x_j}(\widetilde{(f+sg)_i})$ for any $i$,  
     we deduce the equality:
  $$\widetilde{\widetilde{V(g)}}_j\centerdot D_i=\widetilde{\widetilde{V((f+sg)}}_j\centerdot D_i.$$

Iterating this procedure for all blowing up points, the claim is proven.

3. Let us show now the last assertion of the proposition. Consider the pencil 
$\lambda f+\mu g$ for $(\lambda:\mu)\in\mathbb{P}^1$. The fact that $g$ is 
associated with $E$ and 
$f+sg$ is weakly associated with $E$ for almost all $s$ implies that the generic member of the pencil is associated with $E$: the intersection number 
$\widetilde{V(f+sg)}\centerdot D_i$ equals $b_i$ and there is a member of the pencil (the one corresponding to the
parameter value $(0:1)$, which is $g$) for which the set $\widetilde{V(f+sg)}\cap D_i$ consists of $b_i$ distinct points. Since this is the most generic behaviour we have that for generic $s$ 
the set $\widetilde{V(f+sg)}\cap D_i$ consists of $b_i$ distinct points.   
\end{proof}

Putting the last two propositions together we have the following theorem:   

\begin{theo}\label{theo:val_def}
      Let $E$ and $F$ be divisors over the origin of $\CC^2$ and let $S$ be the minimal model 
      containing the divisor  $E+F$. The following are equivalent: 
             \begin{enumerate}[(a)]
			\item  $\nu_E\leq \nu_F$; 
			\item there exists a deformation $G=(g_s)_s$ with $g_0$ weakly associated 
			      with $F$ in $S$ and $g_s$ weakly associated with $E$ in $S$, 
			       for $s\neq 0$  small enough; 
			\item there exists a deformation $G=(g_s)_s$ with $g_0$ associated with $F$ 
			       in $S$ and $g_s$ associated with $E$ in $S$, for $s\neq 0$ 
                                   small enough;  
                            \item there exists a linear deformation $(g_0+sg)_s$ with $g_0$ 
                                  associated with $F$ in $S$ and $g$ and $g_s$ associated with $E$ 
                                  in $S$, for $s\neq 0$ small enough.  
              \end{enumerate}
 \end{theo}
		
For the case that $F$ is prime, we have the following finer theorem, of which
Theorem~\ref{theo:main1} stated in the introduction is the special case where 
$E$ is also prime:
		
\begin{theo}\label{theo:val_def_prim}
             Let $F$ be a prime divisor, and $E$ be any divisor. 
             Then the following are equivalent: 
\begin{enumerate}[(a)]
     \item $\nu_E\leq\nu_F$;  
       \item there exists a deformation $G=(g_s)_s$  with $g_0$ associated with $F$ 
            in the minimal model of $F$ and $g_s$ associated with $E$ in the minimal model 
            of $E$ for $s\neq 0$ small enough.
\end{enumerate}
\end{theo}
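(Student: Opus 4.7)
\medskip

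\noindent\textbf{Proof proposal for Theorem \ref{theo:val_def_prim}.}

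For the direction $(a)\Rightarrow(b)$, I would deduce it from Theorem \ref{theo:val_def}, which gives the stronger conclusion that there is a linear deformation with $g_0$ associated with $F$ and $g_s$ associated with $E$, both relative to the common minimal model $\hat S$ of $E+F$. The remaining task is to observe that association descends through the birational projections $\hat S\to S_F$ and $\hat S\to S_E$. Concretely, if the strict transform of a factor $h^{(j)}$ of $g_0$ on $\hat S$ is smooth and meets $F$ transversally at a smooth point $p$ of $Exc_{\hat S}$, then $p$ lies on no other exceptional component of $\hat S$, so $\pi_F^{-1}(\pi_F(p))=\{p\}$ and $\pi_F$ is a local isomorphism at $p$; hence $\pi_F(p)$ is a smooth point of $Exc_{S_F}$ on $F$ and the projected strict transform is again a curvetta of $F$ in $S_F$. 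The same argument applied to each curvetta factor of $g_s$ transfers the association from $\hat S$ to $S_E$.

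For the converse $(b)\Rightarrow(a)$, my plan is to apply Proposition \ref{lem:val_def1} inside $S_F$ itself. Since $g_0$ is associated with $F$ in $S_F$ and $F$ is prime, the implication ``associated $\Rightarrow$ weakly associated'' forces $H_{g_0}^{S_F}=F$ in $S_F$: the unique curvetta factor of $g_0$ has strict transform intersecting $F$ in one transversal smooth point and no other component of $Exc_{S_F}$, so the intersection numbers $\widetilde{V(g_0)}\cdot D_i$ reproduce exactly the coefficients of $F$. Writing $E^{*}:=H_{g_s}^{S_F}$, Proposition \ref{lem:val_def1} yields $\nu_{E^{*}}\leq\nu_F$. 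The remaining step is to prove $\nu_E\leq\nu_{E^{*}}$: combining this with the previous inequality gives exactly $(a)$.

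To establish $\nu_E\leq\nu_{E^{*}}$ I would use the primality of $F$ to exploit the fact that the sequence of blow-up centres leading to $S_F$ is totally ordered, so $S_F$ is a ``chain'' model. For each prime component $E_i$ of $E$ appearing in $S_F$, the hypothesis that $g_s$ is associated with $E$ in $S_E$ (weakly associated in $S_E$ in particular) controls the multiplicity $\widetilde{V(g_s)}\cdot E_i$ from below by the coefficient $a_i$. For the prime divisors $D$ in $S_F\setminus S_E$, the contribution of $D$ to $E^{*}$ is non-negative, and by Lemma \ref{lem:exchange} and the characterization given by Proposition \ref{prop:val}, assembling these contributions divisor by divisor produces an inequality of the form $E^{*}\geq E+(\textrm{effective})$, coefficient-wise on the components that matter, from which $\nu_E\leq\nu_{E^{*}}$ follows by linearity of the valuation attached to a divisor.

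The main obstacle I anticipate is exactly this last bookkeeping step: handling the case in which $S_F$ does not dominate $S_E$, so that the components of $E$ are not literally components of $E^{*}$. In that situation one cannot compare $E$ and $E^{*}$ as divisors in a single model, and the comparison must be performed valuation by valuation using the inductive multiplicity-sequence argument of Proposition \ref{prop:val}, exactly as in the proof of $(6)\Rightarrow(2)$ there; primality of $F$ is crucial because it makes the totally ordered chain of centres an invariant along which one can propagate the inequality from the points shared with $S_E$ down (or up) to the full minimal model of $F$.
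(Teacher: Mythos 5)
Your direction $(a)\Rightarrow(b)$ is correct and follows the paper's route (deduce from $(a)\Rightarrow(d)$ of Theorem~\ref{theo:val_def}); the descent argument that association in the minimal model $\hat S$ of $E+F$ persists under the projections $\hat S\to S_F$ and $\hat S\to S_E$ is precisely the detail the paper leaves implicit, and the local-isomorphism justification you give for it is sound.

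The direction $(b)\Rightarrow(a)$ has a genuine gap, and I believe the plan as written cannot be repaired without changing the model in which Proposition~\ref{lem:val_def1} is applied. The intermediate claim $\nu_E\leq\nu_{E^*}$, where $E^*:=H_{g_s}^{S_F}$, is simply false in general. Here is a concrete instance within the hypotheses of the theorem. Take $E$ to be the last divisor of the minimal embedded resolution of a $(2,3)$-cusp and $F$ the last divisor of that of a $(2,5)$-cusp, both with tangent $y=0$. One checks $\nu_E\leq\nu_F$ via the approximate-roots criterion (Proposition~\ref{prop:prime}): $\nu_E(x)=2\leq 2=\nu_F(x)$, $\nu_E(y)=3\leq 5=\nu_F(y)$, $\nu_E(y^2-x^3)=6\leq 6=\nu_F(y^2-x^3)$, so $(a)$ holds, and the deformation $g_s=y^2-x^5-sx^3$ realizes $(b)$: $g_0$ is a curvetta of $F$ in $S_F$, and $g_{s\neq 0}$ is a curvetta of $E$ in $S_E$. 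However, $S_F$ is obtained by blowing up $O$, the free point $p_1$ on $D_0$, a further \emph{free} point $p_2$ on $D_1$, and the satellite $p_3=D_1\cap D_2$; the strict transform of $V(g_{s\neq 0})$ in $S_F$ sticks at the satellite point $D_0\cap D_1$ (which is $p_2'$ in the chain to $E$, never blown up on the way to $S_F$). So $E^*=D_0+D_1$, and then $\nu_{E^*}(y^2-x^3)=\nu_{D_0}(y^2-x^3)+\nu_{D_1}(y^2-x^3)=2+3=5$, while $\nu_E(y^2-x^3)=6$. Thus $\nu_E\not\leq\nu_{E^*}$ even though both $(a)$ and the conclusion $\nu_{E^*}\leq\nu_F$ of Proposition~\ref{lem:val_def1} hold. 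The difficulty you flag at the end (what happens when $S_F$ does not dominate $S_E$) is exactly the failure mode, but it cannot be dismissed by a coefficient-wise bookkeeping in $S_F$: $E$ need not even share a component with $E^*$.

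The asymmetry you should exploit goes the other way. The paper first shows that if $F$ appears in $S_E$ then $E=F$ (trivial case), then passes to a model $S'$ obtained by blowing up $S_E$ further until $F$ also appears. In $S'$, because the extra blow-ups lie only along the chain toward $F$ and do not touch the point where $\widetilde{V(g_0)}$ leaves $F$, one has $H_{g_0}^{S'}=F$, and $H_{g_s}^{S'}=:E'$ is obtained from $E$ by (possibly) moving each curvetta of $g_s$ past further centers, so $E'$ dominates $E$ component by component and $\nu_E\leq\nu_{E'}$ comes for free. Applying Proposition~\ref{lem:val_def1} in $S'$ then gives $\nu_{E'}\leq\nu_F$, hence $\nu_E\leq\nu_F$. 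In short: the model should be chosen to dominate $S_E$ (so that passing to it can only raise $\nu$ of the generic fiber), not to sit inside $S_F$ (where this monotonicity fails).
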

\begin{proof}
The implication $(a)\Rightarrow(b)$ follows from $(a)\Rightarrow(d)$ of Theorem \ref{theo:val_def}.

   It only remains to prove that $(b)\Rightarrow(a)$. 
Assume that we have a deformation as in the statement. 
Consider the minimal model $S$ for $E$. We claim that the divisor $F$ does not appear in $S$ unless we have the equality $E=F$,
case in which the implication is obvious. 

 Let $E = \sum a_iE_i$. If $F$ appears in the minimal model of $E$ then there exists $E_i$ in the 
support of $E$ such that either it is equal to $F$ or it dominates it. But then, 
by Remark~\ref{rem:dom-val},  we have the
valuative domination $\nu_{E_i}\geq \nu_F$. Since by hypothesis $\nu_E\leq \nu_F$ 
and obviously $\nu_{E_i}\leq\nu_E$,
we have $\nu_{E_i}=\nu_F$. We deduce the equality $E_i=E=F$. 

Now, we keep blowing up until we get a model where  $F$ also appears. It may happen that the strict transform of 
$V(g_s)$ for small enough $s\neq 0$ meet now another 
divisor $E'>E$. We can apply now Lemma \ref{lem:val_def1} to obtain $E'\leq_\nu F$. Since we have that $E\leq_\nu E'$ we get $E\leq_\nu F$ as desired.
\end{proof}

\begin{rem}\label{re:mas} 
       Given a deformation as in (b) in Theorem \ref{theo:val_def_prim} 
     we can consider the minimal model $S$ of $E+F$ and
    the divisors $H_{g_0}$ and $H_{g_s}$ as in Notation \ref{not:def2}. A quick reflection 
    will convince the reader that the equalities $F=H_{g_0}$ or $E=H_{g_s}$ do not necessarily hold,  
  for example if $g_0$ is also associated with a prime divisor $F'>_dF$ which appears 
  in the minimal model of $E$. 
\end{rem}

\subsection{Classical adjacency of functions and deformations fixing the free points}
\label{sec:def_fix}
$\:$ 
\medskip

The classical \emph{adjacency problem for plane curves} asks whether, 
 given  any two different embedded topological types encoded, for instance, by the two dual graphs $\calG_0$ and $\calG_1$ of the minimal embedded resolution of two curves germs, 
 there exists a deformation $(h_s)_s$ where $V(h_0)$ and $V(h_{s\neq 0})$ have the embedded topological types given by $\calG_0$ and $\calG_1$ respectively. 
We say in this case that the topological type given by  $\calG_0$ is \emph{upper-adjacent} to the one given by $\calG_1$. 

Let $(h_s)_s$ be a curve deformation and $C$ any plane curve germ at
the origin. It is classical that:
\begin{equation}\label{eq:sem_int}I_0(V(h_0),C)\geq I_0(V(h_s),C),\end{equation}
for $s \neq 0$ small enough. 

\begin{rem}
    Gorsky and N\'emethi proved in \cite[Proposition 4.5.1]{NG} a refined criterion, 
    consisting on an  upper semi-continuity of the Hilbert function associated with a 
    (multi)-branch, which obstructs the existence of Arnold adjacencies in the case 
    in which the special curve is irreducible. Their proof boils down to the inequality 
    (\ref{eq:sem_int}).
\end{rem}

Given a deformation as in Notation \ref{not:def},  we know that the family of curves $V(h_s)$ 
for $s\neq 0$ have the same embedded topological type, that is, the final divisors of 
their  minimal embedded resolutions have the same combinatorial type. These divisors 
can change with $s\neq 0$ because the free infinitely near points appearing as blowing 
up centers in the minimal embedded resolutions of the curves $V(h_{s\neq 0})$ 
can move continuously with $s$ without changing the combinatorial type. 
In the deformations in item (c) of Theorem \ref{theo:val_def} this does not happen and 
the generic curves for small value of the parameter $s\neq 0$ can be resolved by a 
single sequence of point blow ups, and can sometimes even lift together after further blowups. 
We codify this phenomenon in a definition (in \cite{AR}, 
this corresponds to a deformation associated with a cluster of infinitely near points): 

\begin{defn}\label{def:def_fix} 
    A deformation $(h_s)_{s}$ is said to 
    \emph{fix the free points for the divisor $E$} 
    if, for $s\neq 0$ small enough, the function $h_s$ is associated with $E$ 
    in the minimal model where $E$ appears. 

       A deformation $(h_s)_{s}$ is said to
        \emph{fix the free points}, if there exists a divisor $E$ such that it fixes the free points for $E$. 
        
      A deformation $(h_s)_s$ is called \emph{dicritical} along a prime divisor $E$ 
          if the strict transforms of $V(h_s)$ for $s \neq 0$ small enough move along $E$. 
          Otherwise we call it \emph{non-dicritical} along $E$.
 \end{defn}
 
\begin{rem}\label{re:def1}
\begin{enumerate}
\item Let be given a deformation $(h_s)_s$ fixing the free points for $E$. If the family 
     of strict transforms of $V(h_s)$ for all $s\neq 0$ meets some component $E_i$ of $E$  
     in a non-dicritical way, then we 
     can keep blowing up until each of the branches of the family of strict transforms 
     meets a component of the exceptional divisor of the new model in a dicritical way. 
\item The divisor for which the family is dicritical along every components is the maximal 
     divisor with respect to domination for which the deformation fixes the free points.
\end{enumerate}
\end{rem}

Observe that two curves have the same topological type if and only if they are respectively associated with two divisors $D$ and $D'$ which are combinatorially equivalent. 

\begin{rem}\label{rem:top}
  Let $D=\sum_{i}E_i$ and $D'=\sum_{i}E'_i$ be two divisors satisfying the condition  
  that $E'_i$ dominates $E_i$ in such a way that it appears after further blowing ups only in free points. Then the curve associated with $D'$ has the same topological type 
  as the curve associated with $D$.
 \end{rem}

The previous remark and Theorem~\ref{theo:val_def}  give a necessary and sufficient 
criterion for the existence of deformations fixing the free points with prescribed embedded topological types. 
Let us explain this criterion.

A topological type is determined by a class of combinatorially equivalent divisors $[E]$, 
but due to the previous remark there are further divisors whose associated curves have the 
same topological type. Denote by $[E]_{top}$ the collection of divisors whose associated 
curves have the given topological type.

Consider another topological type $[F]$ and let $[F]_{top}$ be the associated set of divisors.

By Theorem \ref{theo:val_def}, there is an adjacency fixing the free points between the 
two topological types, if and only if there exist divisors $E'\in [E]_{top}$
and $F'\in [F]_{top}$ which appear in a common model, such that we have the inequality: 
$$\nu_{E'}\leq \nu_{F'}.$$

Then, given $[F]_{top}$,  our interest is to identify all the topological types $[E]_{top}$ 
such that the previous inequality is satisfied for some $E'\in [E]_{top}$ and $F'\in [F]_{top}$.

Let us call \emph{the complexity of $E$} the length of the longest totally ordered chain of 
infinitely near points appearing as blowing up centers leading to the minimal model of $E$.

The validity of the previous inequality only depends on the combinatorial 
type of the pair $(E',F')$. The next lemma implies that, in order to check the 
existence of the adjacency fixing the free points between the topological types, 
we only have to look at pairs $(E',F')$ with $E'$ and $F'$ of bounded complexity,
and therefore at finitely many combinatorial types of the pair $(E',F')$. Being 
finite in number they can be enumerated, and this gives a method to check 
the existence of the adjacency in finite time.

Given a topological type,  there is a combinatorial class of divisors $[E]$ of minimal complexity included in $[E]_{top}$. It corresponds to the divisors associated with the minimal embedded resolutions of the curves with the given topological type. Observe that any other divisor in 
$[E]_{top}$ dominates one in this class by a blowing up process involving only free points. 
The \emph{complexity of a topological type} is, by definition, the complexity of any divisor 
associated to the minimal resolution of a curve with the given topological type.

\begin{lem}
    Denote by $M$ the maximum of the complexities of the two topological types 
    we are considering. If there exist $E'\in [E]_{top}$ and $F'\in [F]_{top}$ 
     appearing in a common model such that the inequality 
     $\nu_{E'}\leq \nu_{F'}$ holds, then there exist $E''\in [E]_{top}$ and $F''\in [F]_{top}$ such 
     that the inequality $\nu_{E''}\leq \nu_{F''}$ holds and the complexities of both $E''$ and $F''$ 
     are bounded by $M$. 
\end{lem}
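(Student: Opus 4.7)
The plan is to take $E_0 \in [E]$ and $F_0 \in [F]$ as minimal complexity representatives of the two topological types, chosen so that $E_0$ is dominated by $E'$ and $F_0$ by $F'$. By the discussion preceding the lemma, such representatives exist, with $E'$ obtained from $E_0$ by a chain of free point blow-ups at $p_1 \in E_0$, $p_2 \in E_0^{(1)}, \ldots, p_k$, and similarly $F'$ from $F_0$ at $q_1 \in F_0, q_2 \in F_0^{(1)}, \ldots, q_l$. Their complexities $m_E$ and $m_F$ are by definition at most $M$, so setting $E'' := E_0$ and $F'' := F_0$ the entire lemma reduces to proving the claim $\nu_{E_0} \leq \nu_{F_0}$.

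To establish this inequality I would apply the equivalence $(1)\Leftrightarrow(5)$ of Proposition \ref{prop:val}: it suffices to verify $\nu_{E_0}(h_H) \leq \nu_{F_0}(h_H)$ for every prime divisor $H$ appearing in the minimal model of $E_0$, where by Lemma \ref{lem:nu_ind} I am free to use any curvetta $h_H$ of $H$. The key step is to pick $h_H$ so that its strict transform hits $H$ at a free point distinct from the finitely many points of $H$ among $\{p_1, q_1\}$ (and, in the degenerate configuration where $F_0$ already belongs to the minimal model of $E_0$, also from the first $E$-chain blow-up center lying on $F_0$). Since free points form a Zariski open subset of $H$, such a choice is always possible. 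The strict transform of $V(h_H)$ is then a smooth branch meeting the exceptional locus only at the chosen point; in particular it avoids $p_1$, and since every $p_i$ with $i\geq 2$ lies above $p_1$, it avoids the whole chain $\{p_i\}$, and similarly $\{q_j\}$. Iterating the elementary formula
\[
\nu_{E_0^{(i+1)}}(h) \;=\; \nu_{E_0^{(i)}}(h) \;+\; m_{p_{i+1}}(V(h))
\]
for a free point blow-up along the chain, with every multiplicity equal to zero, produces $\nu_{E'}(h_H) = \nu_{E_0}(h_H)$ and symmetrically $\nu_{F'}(h_H) = \nu_{F_0}(h_H)$. Evaluating the hypothesis $\nu_{E'} \leq \nu_{F'}$ at $h_H$ therefore gives $\nu_{E_0}(h_H) \leq \nu_{F_0}(h_H)$, and Proposition \ref{prop:val} concludes.

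The main obstacle will be verifying the disjointness of the strict transform of $h_H$ from the free chains $\{p_i\}$ and $\{q_j\}$. This is immediate when $H \notin \{E_0, F_0\}$, because the chains live on divisors distinct from $H$. The genuine difficulty concerns the case $H \in \{E_0, F_0\}$, and particularly the configuration where the $E$-chain and $F$-chain of blow-ups share a long enough initial segment that one of $E_0, F_0$ dominates the other. The saving observation is that only finitely many blow-up centers of the common model lie on any given prime divisor, so they may all be dodged by a generic choice of the curvetta's hit point on that divisor.
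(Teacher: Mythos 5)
Your approach is genuinely different from the paper's: where the paper converts the hypothesis into a deformation via Theorem~\ref{theo:val_def}, truncates the common model $S$ at depth $M$, and reads off divisors $E''$, $F''$ weakly associated with the truncated deformation before applying Theorem~\ref{theo:val_def} again, you attempt a direct valuation-theoretic verification of $\nu_{E_0}\leq\nu_{F_0}$ for the \emph{minimal} representatives $E_0$, $F_0$ via Proposition~\ref{prop:val}(5). However, this stronger reduction has a gap that the ``saving observation'' does not fix.

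The equality $\nu_{F'}(h_H)=\nu_{F_0}(h_H)$ can fail even when $H\notin\{E_0,F_0\}$: the real obstruction is not whether the chains ``live on $H$'' but whether the minimal model of $F_0$ is dominated by that of $H$. If $F_0<_d H$ (which forces $F_0<_d$ some component of $E_0$), then the strict transform of \emph{any} curvetta $V(h_H)$ in the minimal model of $F_0$ is forced to hit $F_0$ at the unique point $p$ from which the blow-up chain towards $H$ continues; this $p$ is determined by the combinatorics, not by where the curvetta meets $H$, so it cannot be ``dodged by a generic choice of the curvetta's hit point on that divisor.'' When $p=q_1$ one gets $\nu_{F'}(h_H)>\nu_{F_0}(h_H)$ for every admissible $h_H$, and the chain of equalities breaks. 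Worse, in exactly this configuration the target of your reduction is \emph{false}: $F_0<_d E_0$ (both minimal-resolution divisors) already gives $\nu_{F_0}\leq\nu_{E_0}$, so $\nu_{E_0}\leq\nu_{F_0}$ would force $E_0=F_0$ by Lemma~\ref{lem:antisym}. One can in fact rule out $F_0<_d E_0$ using the hypothesis — via Theorem~\ref{theo:val_def} produce a deformation whose special member has type $[F]$ and generic type $[E]$, then invoke upper semicontinuity of the Milnor number together with $\mu$-constancy implying topological constancy for plane branches — but that argument is not in your write-up, and supplying it essentially rediscovers the deformation-theoretic route the paper takes. The paper's choice to work with the depth-$M$ truncation of $S$ rather than with $E_0$, $F_0$ is precisely what makes these delicate coincidences irrelevant: $E''$ and $F''$ there are \emph{not} the minimal representatives, and no direct comparison $\nu_{E_0}\leq\nu_{F_0}$ is ever asserted.
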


\begin{proof}
   Let $S$ be the minimal model in which $E'$ and $F'$ appear. 
   By Theorem \ref{theo:val_def}, there exists a deformation $(f_s)_s$ such that the divisor 
   associated with $f_0$ in $S$ equals $F'$ and the divisor associated with 
   $f_s$ ($s\neq 0$) in $S$ equals $E'$.

       Let $\{p_1,...,p_r\}$ be a maximal linearly ordered chain of infinitely near points 
       appearing in the blowing up process leading to $S$. Let $S'$ be the model obtained 
       after blowing up the union of $\{p_1,...,p_{min\{M,r\}}\}$ over all maximal linearly 
       ordered chains leading to $S$.

       Let $F''$ be the divisor weakly associated with $f_0$ in $S'$. Since $M$ is at least 
       the complexity of the topological type given by $F'$, we have that $F''$ belongs to 
       $[F]_{top}$ (and that $f_0$ is associated to $F''$).

        Let $E''$ be the divisor weakly associated with $f_{s \neq 0}$ in $S''$; for the same 
        reason we have $E''\in [E]_{top}$ (and that $f_{s \neq 0}$ is associated to $E''$).

          Since $E''$ and $F''$ are in a common model $S'$,  Theorem \ref{theo:val_def} 
          implies the desired inequality.
\end{proof}

Moreover, we have the following reductions which help avoiding unnecessary computations: 
    \begin{itemize}
         \item using Corollary \ref{cor:max} we are reduced to check some ``maximal''
                contact orders; 
          \item when $F$ is a prime divisor, Theorem \ref{theo:val_def_prim} 
             allows us to avoid checking for the divisors $F'$ of a combinatorial type different from 
             that of $F$.
    \end{itemize}

Using the previous procedure, let us study now the adjacencies which can be realized 
fixing the free points among simple, unimodal and bimodal 
singularities with Milnor number less or equal 16, that is, those classified in \cite{Arn}.

In Figure \ref{fig:adj_simples_parab} one can see all adjacencies between simple and parabolic plane curve singularities.  The complete list of adjacencies was originally given in \cite{Saito}. 
Here an arrow directed from a singularity type $X$ to a singularity type $Y$ means that 
$X$ is \emph{upper adjacent} to $Y$, that is, there is a deformation of a singularity of type 
$X$ with generic singularities of type $Y$. \textbf{A dotted arrow means that the adjacency 
is not realizable by deformations fixing the free points.}

Among the 93 (classical) adjacencies between simple singularities with Milnor number less or equal 8, only 7 are not realized by deformations fixing the free points. 
Moreover, between $A_{*}$ and $D_{*}$ type simple singularities, 
only the adjacencies from $D_{2n+1}$ to $A_{2n}$ are not realizable in that way for any $n$.

\begin{ex} Consider the deformation: 
$$y^3+x^4 +s^2y^2+2s x^2y$$ of the singularity $E_6$. For $s\neq 0$ 
we get the singularity $A_4$. It is easy to check that the valuative inequality is 
not satisfied for the corresponding 
divisors for any contact order between them, so it is not possible to realize 
the adjacency with a deformation fixing the free points. 
\end{ex}

All adjacencies between unimodal singularities\footnote{For the convenience of the reader, we recall the different names of the unimodal singularities. The \emph{parabolic} 
ones are $X_9=T_{2,4,4}=\tilde{E}_7=X_{1,0}$ and $J_{10}=T_{2,3,6}=\tilde{E}_8=J_{2,0}$. 
For the \emph{hyperbolic} singularities we use Arnold's notation 
(not the one in \cite{Bri}) with the following translation: 
$J_{2,i}=T_{2,3,6+i}$.   We have also $X_{1,p}=T_{2,4,4+p}$ and    
$Y^1_{r,s}=T_{2,4+r,4+s}$. We use also Arnold's notations for the exceptional singularities. 
For the correspondence with the other name $S_{a,b,c}$, see \cite{Bri}.} were described in 
\cite{Bri}. All adjacencies between hyperbolic singularities $T_{a,b,c}$ are realizable by linear 
adjacencies as it was pointed out in \cite{Bri} (in this case $T_{a,b,c}\longrightarrow 
T_{\alpha,\beta,\gamma}$ if and only if $\alpha\leq a$, $\beta\leq b$, $\gamma\leq c$). 
On the contrary, not all adjacencies from exceptional to hyperbolic singularities are realizable fixing the free points as it is shown in figure \ref{fig:Arn_adj} and \ref{fig:Adj_uni}.  
In these two diagrams one can find all the adjacencies between unimodal singularities realised fixing the free points apart from the ones between hyperbolic singularities. The adjacency  from $X_{1,1}$ to $E_8$ seems not to appear in the literature  but it follows from the comparison of the Newton diagrams of their normal equations, criterion used in \cite{Arn}.

The adjacencies between bimodal singularities, between unimodal singularities and simple and between unimodal and bimodal are mainly described in Arnold where it is stated that the list is not complete. The list given in \cite{BBB} of adjacencies between bimodal singularities with Milnor number less or equal than $16$ is assumed to be more complete than the one in \cite{Arn}. We find that again almost all of these adjacencies can be realized by deformations fixing the free poitns. Those that are not, appear with dotted lines in Figure \ref{fig:Arn_adj}. The adjacencies $Z_{17}\to E_{14}$, $W_{17}\to Z_{13}$, $E_{19}\to J_{3,2}$ and $E_{20}\to J_{3,2}$ are in \cite{BBB} but not in Arnold's list. The one  $Z_{11}\to E_8$ does not appear even in \cite{BBB} but it follows also from the comparison of the Newton diagrams of their normal equations, a criterium used in \cite{Arn}. They are all realizable by families fixing the free points.

Observe that the singularity $E_8$ is simple, $Z_{11}=S_{2,4,5}$, $Z_{13}=S_{2,4,7}$, 
$E_{14}=S_{2,3,9}$ and $J_{3,2}$ are unimodal and $Z_{17}$, $E_{19}$, $W_{17}$ and $Z_{17}$ are bimodal.

\begin{figure}%
\includegraphics[width=110mm]{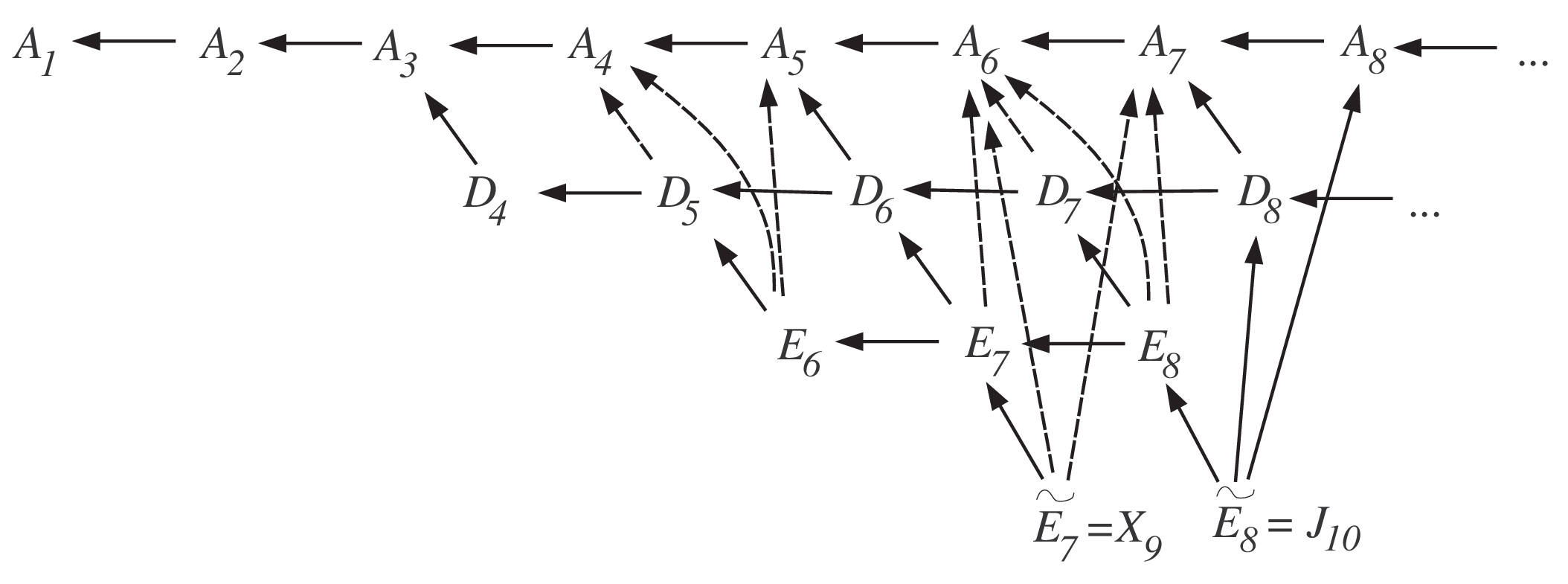}
\caption{Adjacencies realizable by deformations fixing the free points among simple 
     and parabolic singularities. Any adjacency of this type follows by transitivity from the 
     arrows in the picture.  Dotted arrows indicate the other classical adjacencies.}
\label{fig:adj_simples_parab}%
\end{figure}

\begin{figure}%
\includegraphics[width=65mm]{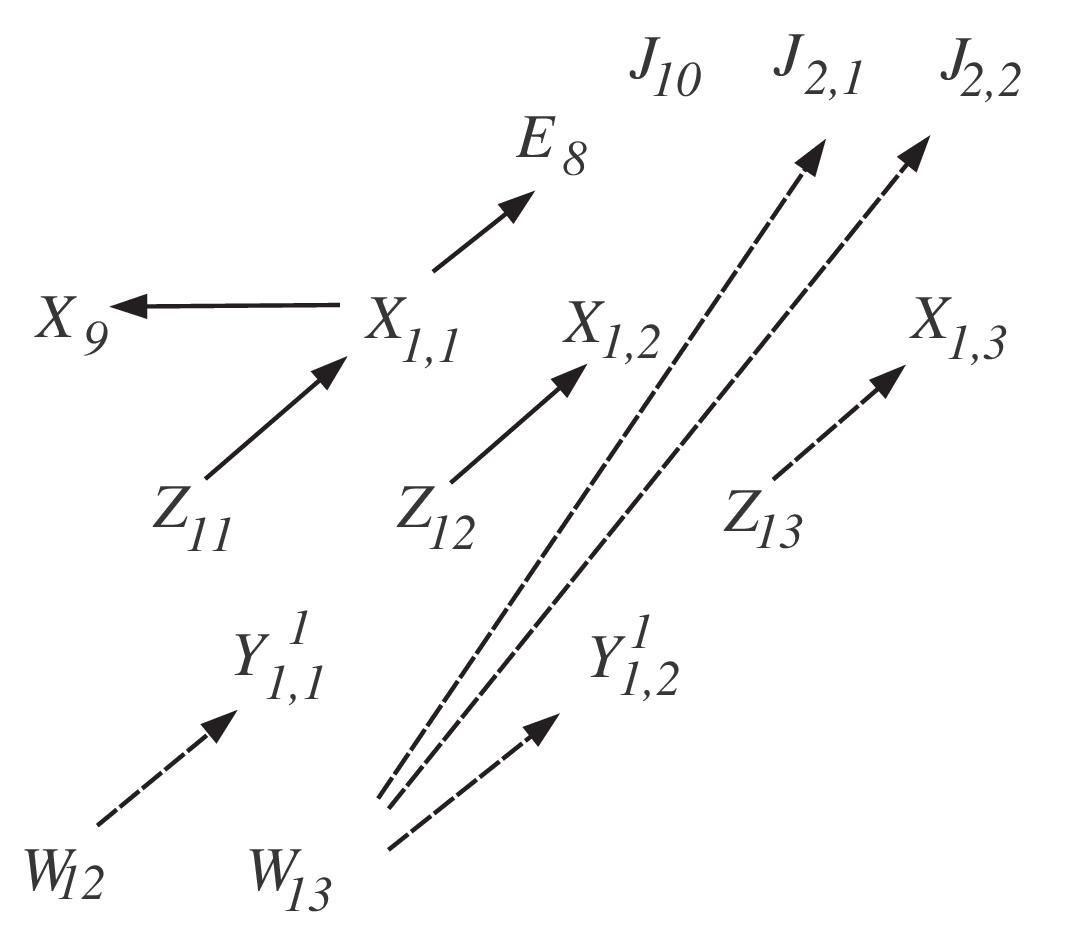}%
\caption{Adjacencies realizable by deformations fixing the free points 
    among unimodal singularities, from exceptional singularities to hyperbolic singularities. 
      Any adjacency of this type between the singularities in the diagram follows by transitivity 
      from the arrows in the picture. Dotted arrows indicate the other classical adjacencies.}%
\label{fig:Adj_uni}%
\end{figure}

\begin{figure}%
\includegraphics[width=125mm]{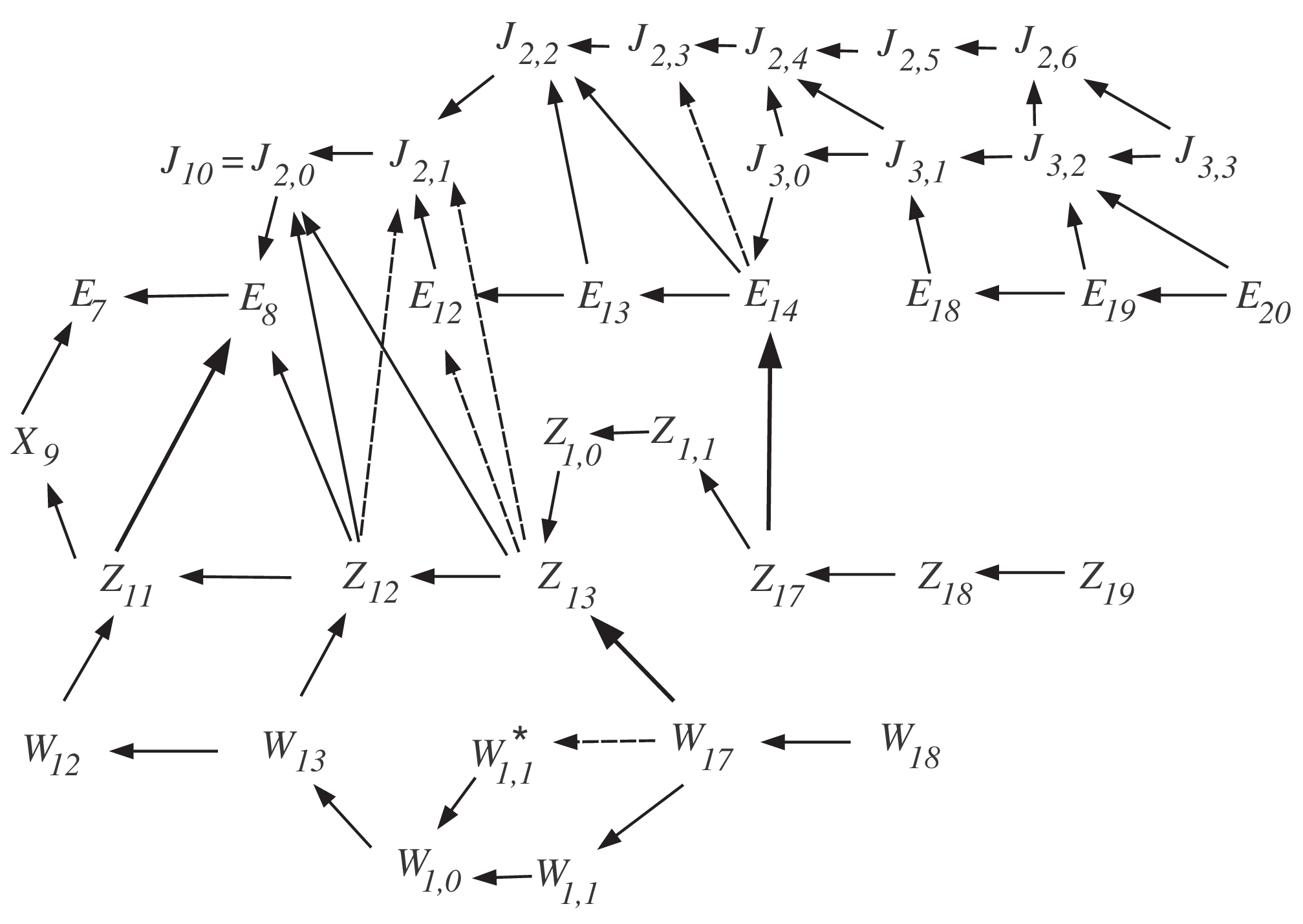}%
\caption{Adjacencies realizable by deformations fixing the free points among some simple, 
 unimodal and bimodal plane curve singularities. Any adjacency of this type between the 
 singularities in the diagram follows by transitivity from the arrows in the picture.  
 Dotted arrows indicate the other classical adjacencies.}%
\label{fig:Arn_adj}%
\end{figure}

To check these diagrams, we have considered the equations for the singularities of plane curves given in \cite{Arn} and we computed their embedded resolutions and multiplicity sequences 
using Singular (these singularities are resolved in at most 5 blow ups). 

\begin{rem}\label{rem:alg}
It is easy to build up an algorithm which, given a prime divisor $F$, finds all those combinatorial 
types of divisors $E$ such that $\nu_{E'}\leq \nu_F$ for some $E'$ with the same combinatorics 
as $E$ and with certain contact order $Cont(E',F)$. By Corollary \ref{cor:finite_val} there are 
only finitely many of them. In particular, this list of divisors would encounter all the 
embedded topological types $\calG_E$ that are adjacent to $\calG_F$ as in 
Definition \ref{def:def_fix}, by deformations fixing the free points. The second author is 
planning to program it in Singular.

First, one can proceed by looking for all the combinatorial types of prime divisors $E$   
recursively on the number of points one has to blow up to obtain their minimal model. 
The equivalence $(1)\Leftrightarrow(6)$ from Proposition \ref{prop:val} 
allows to do the computations in every step of the recursion in a fast way. 
Then, one can proceed recursively in the number of components for the non-prime case. 
Note also that according to Corollaries \ref{cor:val_max_cont} and \ref{cor:max}, in order 
to check if a divisor is adjacent to $F$ it is sufficient to check the cases of ``maximal''
contact order.

The case where $F$ is not prime can be treated with the same ideas but one has to 
carefully enumerate all the possibilities of contact orders between the branches of 
$E$ and $F$.
\end{rem}

\subsection{A result on the $\delta$-constant stratum through the study of arc spaces} 
\label{resdelta}
$\: $
\medskip

Another classical problem related to the classical study of adjacencies (see section \ref{sec:def_fix}) is the study of the $\delta$-constant stratum of a topological type. 
Using the following result of Teissier (
\cite{T 80}, see also \cite{CL 06}), it consists in studying, among the possible adjacencies, the ones that are realizable by deformations of parametrizations:

\begin{theo}\label{theo:Teissier_delta}
    A family of curves on $(\CC^2,0)$ 
    admits a parametrization in family if and only if it is $\delta$-constant. 
\end{theo}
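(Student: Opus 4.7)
The plan is to prove Teissier's theorem by relating $\delta$-constancy in a flat family of plane curves to the flatness of the simultaneous normalization of the total space. Let $\pi : \calC \to \Delta$ be a flat family of reduced plane curve germs over a disc germ $\Delta$, with fibres $C_s = \pi^{-1}(s)$, and recall $\delta(C_s) = \dim_\CC \widetilde{\calO}_{C_s,O} / \calO_{C_s,O}$, where $\widetilde{\calO}$ denotes the integral closure.

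For the easier direction ``parametrization in family implies $\delta$-constant'', I would observe that a parametrization in family is precisely a finite birational morphism $n : \widetilde{\calC} \to \calC$ over $\Delta$, where $\widetilde{\calC} \to \Delta$ is a smooth one-dimensional family (a disjoint union of disc families) and $n$ restricts fibrewise to the normalization of $C_s$. Then $\delta_s$ is the length at the singular point of the cokernel of the inclusion $\calO_\calC \hookrightarrow n_*\calO_{\widetilde{\calC}}$ restricted to the fibre over $s$. Flatness of both $\pi$ and $\widetilde{\calC} \to \Delta$ makes $n_*\calO_{\widetilde{\calC}}$ a coherent sheaf whose fibrewise length at the singular point is locally constant in $s$, which yields $\delta$-constancy.

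For the converse ``$\delta$-constant implies parametrization in family'', which is the substantive direction, I would form the normalization $n : \calC^\nu \to \calC$ of the total space, which is finite and birational, and study the projection $\calC^\nu \to \Delta$. A priori the fibre $(\calC^\nu)_s$ admits a finite map to $C_s$ that factors through the fibrewise normalization $(C_s)^\nu$, but the two need not coincide. The failure of their coincidence is measured by an upper semi-continuity inequality relating $\delta(C_s)$ to the fibrewise length of $n_*\calO_{\calC^\nu}/\calO_\calC$. The hypothesis of constant $\delta$ forces equality in this inequality, hence $(\calC^\nu)_s = (C_s)^\nu$ is smooth one-dimensional for every $s$. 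Since $\calC^\nu$ is Cohen--Macaulay and $\Delta$ is a smooth disc, miracle flatness then delivers flatness of $\calC^\nu \to \Delta$, producing a smooth family that is the desired parametrization in family.

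The main obstacle is exactly that normalization does not commute with arbitrary base change: the fibre of the normalization of the total space is in general strictly smaller than the normalization of the fibre. The whole content of the theorem is that the $\delta$-constancy hypothesis rules out precisely this pathology, via the upper semi-continuity of $\delta$ in flat families. This semi-continuity statement is the technical core of the argument, and proving it requires either a careful analysis of the conductor ideal together with the Gorenstein property of plane curve singularities (so that $2\delta$ can be read off symmetrically from $\calO_\calC$ and from $\calO_{\calC^\nu}$), or the machinery of simultaneous resolution originally developed by Teissier. Once semi-continuity is established, the rest of the argument is a routine application of flatness criteria.
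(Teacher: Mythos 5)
The paper does not prove Theorem~\ref{theo:Teissier_delta}: it is quoted as a classical theorem of Teissier, with citations to \cite{T 80} and \cite{CL 06}, and no argument is given in the text. So there is no ``paper's proof'' to compare against; what you have written is an outline of the classical argument found in those references, and it is broadly in the right spirit. Two comments are in order. First, a small orientation slip: for a reduced fibre of the normalization of the total space, the containment of rings is $\calO_{C_s}\subseteq\calO_{(\calC^\nu)_s}\subseteq\calO_{(C_s)^\nu}$, because the middle ring is a finite (hence integral) birational extension of $\calO_{C_s}$ and therefore sits inside the integral closure; consequently it is the fibrewise normalization $(C_s)^\nu$ that dominates $(\calC^\nu)_s$, not the reverse, and the relevant inequality is $\dim_\CC\bigl(\calO_{(\calC^\nu)_s}/\calO_{C_s}\bigr)\leq\delta(C_s)$ with equality precisely when $(\calC^\nu)_s=(C_s)^\nu$. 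Second, and more substantively, the semi-continuity of $\delta$ together with the characterization of the equality locus carry essentially all the weight of the theorem, and you defer both. In Teissier's and Chiang-Hsieh--Lipman's treatments these facts \emph{are} the theorem; a complete proof would have to carry out the conductor-ideal or Gorenstein-duality analysis (or build the simultaneous resolution directly) rather than invoke it. You flag this honestly, so the proposal reads correctly as an outline, but it is not yet a proof.
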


A family of parametrizations is a convergent wedge realising the corresponding adjacency. Observe that a representative of the wedge may have reducible generic curves at the origin. This means that a wedge can have \emph{returns} (see \cite{tesis}):

\begin{defn}\label{def:returns} 
     The \emph{returns} of a wedge $\alpha:\CC^2\to \CC^2$ are the branches of  
      $(\alpha^{-1}(0), 0)$ different from $\Lambda\times\{0\}$ where $\Lambda$ 
      is the parameter space of the family.
\end{defn}

In the general surface case, the existence of returns can be essential and unavoidable, 
see \cite{tesis}. This is not the case in $\CC^2$: 

\begin{lem}\label{lem:ret} 
   Given a wedge $\alpha$ strictly realizing a Nash-adjacency 
    $\overline{N}_F\contneq\overline{N}_E$, there exists a wedge strictly realizing 
    it and without returns. 
\end{lem}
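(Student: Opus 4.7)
My plan is to produce a wedge without returns as a simultaneous normalization of the total space of the deformation defined by $\alpha$, using Teissier's theorem (Theorem~\ref{theo:Teissier_delta}) as the main tool.

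First, I would form the graph map $\tilde{\alpha}:(\CC^2_{s,t},0)\to(\CC^3_{x,y,s},0)$, $\tilde{\alpha}(s,t)=(\alpha(s,t),s)$, and consider its reduced image $X$, a surface germ whose slice over each small $s$ is the image branch $C_s:=\alpha_s(\CC)$. Since $\alpha$ \emph{strictly} realizes the adjacency, the lift of $\alpha_s$ to the minimal model of $E$ (resp.\ of $F$ for $s=0$) is a smooth branch meeting the exceptional divisor transversely, which forces $\alpha_s$ to be a primitive parametrization of $C_s$, i.e.\ its normalization. Hence $\tilde{\alpha}$ provides a fiberwise normalization of the family $X\to(\CC_s,0)$, and Theorem~\ref{theo:Teissier_delta} implies that this family is $\delta$-constant.

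I would then take the simultaneous normalization $\pi:\tilde{X}\to X$ provided by $\delta$-constancy. Each fiber $\tilde{X}_s$ is the normalization of the irreducible branch $C_s$, so it is a smooth disc with a unique point above $O\in C_s$; this makes $\tilde{X}$ a smooth surface germ, identifiable with $(\CC^2,0)$, and $\sigma:=\pi^{-1}(\{O\}\times\CC_s)$ a smooth section which serves as the new parameter line. The candidate new wedge is
\[
\alpha':=\operatorname{pr}_{x,y}\circ \pi\colon \tilde{X}\cong(\CC^2,0)\longrightarrow(\CC^2_{x,y},O).
\]
By construction each $\alpha'_s$ is a normalization of the same $C_s$, hence lifts transversely to $E$ (or to $F$ for $s=0$), so $\alpha'$ strictly realizes $\overline{N}_F\subsetneq \overline{N}_E$. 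Moreover $(\alpha')^{-1}(O)=\sigma$, which has the new parameter line as its only branch, so $\alpha'$ has no returns.

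The delicate step is the passage from the fiberwise parametrization $\tilde{\alpha}$ to Teissier's simultaneous normalization and the smoothness of $\tilde{X}$: one must confirm that the transversality condition in the definition of \emph{strict} realization really forces $\alpha_s$ (and $\alpha_0$) to be a normalization of its image, so that Theorem~\ref{theo:Teissier_delta} applies, and then invoke the smoothness of the simultaneous normalization of a $\delta$-constant family of branches over the smooth base $(\CC_s,0)$. Once these points are secured, both geometric properties of $\alpha'$ (strict realization and absence of returns) follow from the fact that an irreducible branch has a single point above $O$ in its normalization.
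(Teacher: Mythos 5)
There is a genuine gap, and it is located exactly where the returns have to be dealt with, which is the whole content of the lemma. By Definition~\ref{def:returns}, a return of $\alpha$ is a branch of $\alpha^{-1}(O)$ other than the parameter axis; its presence means that, on any representative of the wedge, $\alpha_s$ hits $O$ again at parameter values $t=q_i(s)\neq 0$ with $q_i(s)\to 0$. Consequently, for $s\neq 0$ the germ $(X_s,(O,s))$ is in general \emph{not} a single branch: it contains the branch parametrized by $\alpha_s$ near $t=0$ together with one branch for each return at $t=q_i(s)$, whereas $(X_0,O)$ has only one branch because $\alpha_0\in\dot{N}_F$ is primitive. Your phrase ``the image branch $C_s$'' already glides past this, and the assertion that $\tilde\alpha$ provides a fiberwise normalization is then not justified: the restriction of $\tilde\alpha$ to $\{s=s_0\}$ parametrizes only one of the branches of $(X_{s_0},O)$, so it is not a normalization of that germ and the hypothesis of Theorem~\ref{theo:Teissier_delta} is not met.

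The failure propagates. Even granting a simultaneous normalization $\pi:\tilde X\to X$, the curve $\sigma:=\pi^{-1}(\{O\}\times\CC_s)$ has, over each $s$, one point for each branch of $(X_s,(O,s))$; hence when returns are present $\sigma$ has several points over $s\neq 0$, so it is not a section and cannot serve as the new parameter axis. In fact, in the typical situation $\tilde\alpha$ is itself finite and generically one-to-one, so it \emph{is} the normalization of $X$; then $\tilde X\cong(\CC^2_{t,s},0)$, $\pi=\tilde\alpha$ and $\alpha'=\alpha$, with the same returns. The strategy is circular: the absence of returns is precisely the statement that $\sigma$ is a single smooth section. The paper instead eliminates the returns by a direct algebraic perturbation: after reparametrizing $s$ so that the branches of $\alpha_1^{-1}(0)$ become sections $(q_i(s),s)$, one replaces $\alpha_2$ by $\alpha_2+st^M$ for a suitable large $M$; this leaves $\alpha_1^{-1}(0)$ unchanged, destroys the coincidence $\alpha_2(q_i(s),s)=0$ on the return branches, and for $M$ large enough does not affect the transversality of the lifts of $\alpha_0$ and $\alpha_{s\neq 0}$.
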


\begin{proof} 
Write $\alpha(t,s)=(\alpha_1(t,s),\alpha_2(t,s))=\CC^2\to \CC^2$. Picking appropriate 
coordinates of the source $\CC^2$, we can
get parametrizations of the branches of the 
zero set of $\alpha_1:\CC^2\to \CC$ in the form $(q_i(s),s^{k_i})$, for $i$ varying 
in some index set $I$. 
We can always redefine $\alpha(t,s):=\alpha(t,s^k)$ with $k=lcm\{k_i \:  , \:  i \in I\}$, 
which is a wedge with the same properties as the original one. So, we can assume that 
the parametrizations of the zero set of $\alpha_1$ are now of the form $(q_i(s),s)$. 
 
A branch $(q_i(s),s)$ gives a return of the wedge if and only if it is contained in the zero 
set of $\alpha_2$, and if $q_i(s)$ is not identically zero.

Let us consider a number $M \in \NN^*$ such that 
$\alpha_2(q_i(s),s)$ is different from $s\cdot q_i(s)^M$ for all $i$.

Then, the wedge given by $\beta(t,s)=(\alpha_1(t,s),\alpha_2(t,s)+st^M)$ does not have returns. 

Besides, by taking $M$ big enough, we can guarantee that the generic arc of $\beta$ 
is also transversal to the divisor $E$, provided the generic arc of $\alpha$ is transversal to $E$.
\end{proof} 

Now, we can state our main result about $\delta$-constant deformations:

\begin{prop}\label{prop:delta}
Let $E$ and $F$ be satellite prime divisors (that means that give topological types). Let
$V (h_E)$ be a curve associated to E and assume it has only one Puisseux pair (which means that $E$ is toric). If the inequality $\nu_E\leq \nu_F$ holds, then there is a 
$\delta$-constant deformation with generic topological type a curvetta for $E$
     and special curve a curvetta for $F$.

\end{prop}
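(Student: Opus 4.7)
The plan is to treat the two implications asymmetrically: the forward direction is essentially immediate from the machinery already developed, while the converse must import the special-case result of Ishii together with the wedge-lifting technology of the first author and Teissier's parametrization-in-family theorem.

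For $(1)\Rightarrow(2)$, I would observe that a $\delta$-constant deformation whose generic fibre is a curvetta for $E$ and whose special fibre is a curvetta for $F$ is in particular a deformation $(g_s)_s$ with $g_0$ associated with $F$ in the minimal model of $F$ and $g_{s\neq 0}$ associated with $E$ in the minimal model of $E$. The equivalence $(a)\Leftrightarrow(b)$ of Theorem~\ref{theo:val_def_prim} then yields $\nu_E\leq\nu_F$ directly; the $\delta$-constancy hypothesis is not needed for this direction.

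For $(2)\Rightarrow(1)$, the strategy is to upgrade the deformation produced by Theorem~\ref{theo:val_def_prim} to a $\delta$-constant one. The crucial ingredient is Ishii's theorem from~\cite{I_max}, which under the hypothesis that $E$ is toric (one Puiseux pair) asserts that the valuative inequality $\nu_E\leq\nu_F$ is equivalent to the Nash adjacency $\overline{N}_F\subset\overline{N}_E$. Granting this, I would then invoke the reformulation recalled in the introduction (due to Reguera~\cite{Reg3} and the first author~\cite{Bob}): the inclusion $\overline{N}_F\subset\overline{N}_E$ implies the existence of a wedge $\alpha:(\CC^2,0)\to(\CC^2,0)$ whose special arc $\alpha_0$ lifts transversely to $F$ in its minimal model and whose generic arc $\alpha_{s\neq 0}$ lifts transversely to $E$ in its minimal model.

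The final step is to convert this wedge into the desired $\delta$-constant deformation. By Lemma~\ref{lem:ret} one may replace $\alpha$ by a wedge without returns realizing the same Nash adjacency, still transversal to $E$ on the generic arcs. This wedge is literally a parametrization in family of the curves cut out by the implicit equation $F(s,x,y)=0$ of its image; the absence of returns ensures that we get a family of (multi-)branches rather than a relation carrying extra components. Teissier's Theorem~\ref{theo:Teissier_delta} then guarantees that the resulting family $(f_s)_s$ is $\delta$-constant. The transversality of $\alpha_0$ and $\alpha_{s\neq 0}$ to $F$ and $E$ respectively in their minimal models translates into the statement that $f_0$ is a curvetta for $F$ and $f_{s\neq 0}$ is a curvetta for $E$, which is exactly condition~(1).

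The main obstacle is the appeal to Ishii's result: her proof exploits in an essential way the toric structure associated with having a single Puiseux pair, and this is the only place where the hypothesis on $E$ enters. Once her implication is available, the rest is a careful but routine gluing of Theorem~\ref{theo:val_def_prim}, Lemma~\ref{lem:ret}, and Theorem~\ref{theo:Teissier_delta}, with the verification that transversality conditions survive each reduction.
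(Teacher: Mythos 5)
The converse direction $(2)\Rightarrow(1)$ in your proposal follows the paper's route: Ishii's lemma gives $\overline{N}_F\subset\overline{N}_E$, Theorem~\ref{theo:wedge_exist} produces a wedge, Lemma~\ref{lem:ret} removes returns, and Teissier's Theorem~\ref{theo:Teissier_delta} converts the resulting family of parametrizations into a $\delta$-constant deformation. That part is fine.

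The forward direction $(1)\Rightarrow(2)$ has a genuine gap. You read hypothesis~(1) as saying that the generic fibre \emph{is} a curvetta for the fixed divisor $E$, i.e.\ that the deformation fixes the free points, and you then apply the equivalence $(a)\Leftrightarrow(b)$ of Theorem~\ref{theo:val_def_prim}. But~(1) only asserts that the generic fibre has the \emph{topological type} of a curvetta for $E$: the free infinitely near points of $V(f_{s\neq 0})$ may move with $s$, so the generic fibre is a curvetta for some $E_s\equiv E$ which need not be $E$ itself. Theorem~\ref{theo:val_def_prim}(b) cannot be invoked without first producing a deformation that really fixes the free points. This is precisely where the one--Puiseux-pair hypothesis is doing its main work: the paper picks, for each $s\neq 0$, a smooth branch $L_s$ of maximal contact with $V(f_s)$, takes the limit branch $L_0$, and conjugates by a family of biholomorphisms $\phi_s$ with $\phi_s(L_s)=L_0$. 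Because a branch with one Puiseux pair has \emph{all} its free infinitely near points lying on its (unique) maximal contact smooth curve, fixing $L_0$ fixes all the free points; so $\bigl(\phi_s(V(f_s))\bigr)_s$ becomes a deformation fixing the free points and Theorem~\ref{theo:val_def} gives~(2). With two or more Puiseux pairs this straightening fails, since the free points appearing after the first satellite segment are not controlled by any smooth curve. Consequently your remark that "the $\delta$-constancy hypothesis is not needed for this direction" and that Ishii's lemma is "the only place where the hypothesis on $E$ enters" is mistaken: the one--Puiseux-pair assumption is essential already for $(1)\Rightarrow(2)$, and the straightening argument is the nontrivial step you are missing.
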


\begin{proof}
Lemma 3.11 
in \cite{I_max} says that in our case $\nu_E\leq \nu_F$ implies
$\overline{N}_F\cont\overline{N}_E$. Then, by Theorem \ref{theo:wedge_exist} we have a convergent wedge 
realizing the adjacency and by the previous proposition we can suppose that the wedge doesn't have returns and then gives a deformation of parametrizations whose generic type is the one associated to $E$.      
\end{proof}

We have also this easy consequence:
\begin{cor} Let $E$ and $F$ be satellite prime divisors such that $\nu_E\leq \nu_F$ holds. Then $\delta(h_E)\leq \delta(h_F)$ for $h_E$ and $h_F$ functions associated to $E$ and $F$ respectively.
\end{cor}

Proposition \ref{prop:delta} can be seen as a generalization of Alberich and Roe's \cite[Corollary 2.4]{AR}, 
which only deals with classical adjacencies.

\section{Inclusions of maximal divisorial sets in the arc space of $\CC^2$}
\label{sec:arcs}

In this section we prove Theorem \ref{theo:main2} of the introduction and we 
derive some consequences of it. Before entering in the core of the proof, we need 
to recall a few known results about wedges and adjacencies and to introduce 
{\em dicritical} wedges.

\subsection{Nash adjacency for $(E,F)$ is a combinatorial property of the pair $(E,F)$}  \label{sec:combNash}

$\ $
\medskip

Let $(X,O)$ be any normal surface singularity. In this paper we talk only about the case 
$X=\CC^2$,  
but some of the results we are using were proved in the possibly singular setting, 
therefore we state them  in that generality.
\begin{defn}
      An \emph{arc through the origin} $O\in X$ is a morphism 
             \[\mathrm{Spec}(\mathbb{C}[[t]])\to X\]
       sending the special point to the origin $O$. We denote by $\mathcal{X}_\infty$ 
     the space of arcs through the origin of $X$. 
      An arc is said to be \emph{convergent} if the power series that define it are convergent. 
\end{defn}

The space of arcs $\mathcal{X}_\infty$ may be endowed canonically with a structure 
of affine scheme over $X$, as inverse limit of jet spaces at all truncation orders.

We will only consider arcs through the origin, so an \emph{arc} will be always through the origin. 

As when $(X,O)$ is smooth, we say that $S$ is a \emph{model} of $(X,O)$ if: 
      \[\pi:(S,E)\to (X,O)\]
is a proper birational map and $S$ is smooth. The divisor $E$ is the preimage 
of $O$ and is called the \emph{exceptional divisor}. 
Let $E=\cup_{k=1}^rE_k$ be the decomposition of $E$ into irreducible components. 
As in Section \ref{sec:not}, we can talk about the \emph{minimal model} 
in which a divisor $D$ appears. 

Given any model $S$ of $X$, by the valuative criterion of properness, any non-constant arc 
$\gamma$ admits a unique lifting: 
         \[\widetilde{\gamma}:\mathrm{Spec}(\mathbb{C}[[t]])\to S.\]
Of course, if $\gamma$ is convergent, so is $\widetilde{\gamma}$. 

\begin{defn}
    An arc is said \emph{to have a transverse lifting} to $S$ if its lifting $\widetilde{\gamma}$ to 
   $S$ meets only one irreducible component $E_k$ transversely at a smooth point of $E$. 
   An arc is said \emph{to have a transverse lifting by a divisor $F$} if its lifting to the minimal 
    model of $F$ meets $F$ transversally at a smooth point of the exceptional divisor.
\end{defn}

Given a prime divisor $E_k$ and any model $(\widetilde{X},E)\to (X,O)$ where it appears, we can consider the following subsets of the space of arcs:  
\[N_{E_k}=\{\gamma\in \mathcal{X}_\infty: \widetilde{\gamma}(0)\in E_k\},\]
\[\dot{N}_{E_k}=\{\gamma\in N_{E_k}: \widetilde{\gamma}\mathrm{\ is \, transverse\ to}\, E_k \}.\]
The Zariski closure $\overline{N}_{E_k}$ of $N_{E_k}$ in $\mathcal{X}_\infty$  
 is called the \emph{maximal divisorial set associated with $E_k$} 
 and does not depend on the model containing $E_k$ which is considered. 

These sets were  introduced by Nash in \cite{Nash}. We collect their basic properties 
in the following lemma: 

\begin{lem}\label{lem:Nash_sets} (see \cite{Nash} and \cite{Bob} for more details)
We have the following statements:
\begin{enumerate}
\item  The sets $N_E$ and $\overline{{N}}_E$ are irreducible sets of $\calX_\infty$. 
\item  The subset $\dot{N}_E$ is open and dense in $N_E$, therefore 
     $\overline{N}_E$ is also the closure of $\dot{N}_E$.
\item The sets $\dot{N}_E$ are cylindrical (there exists an $m \in \NN$ such 
    that they are the preimages of constructible 
    sets of $m$-jets by the $m$-truncation map).  
\item  The sets $\overline{N}_E$ are of finite codimension.
\item If $(X,O)$ is smooth, then the sets $\overline{N}_E$ are cylindrical.
\item  If $E\neq F$ then $\overline{N}_E\neq \overline{N}_F$. 
\end{enumerate}
\end{lem}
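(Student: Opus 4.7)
The plan is to work on the minimal model $\pi_S : S \to \CC^2$ on which $E$ appears and to exploit the fact that, near a smooth point of $Exc_S$ lying on $E$, both the condition of meeting $E$ and the condition of meeting it transversely are determined by a bounded-order jet of the lifted arc. I would first settle (3) and (4), then deduce (1) and (2) from an explicit parametrization, and handle (5) via the valuative criterion.

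For (3), picking local coordinates $(x,y)$ on $S$ near a smooth point of $E = \{x = 0\}$ chosen away from $\Sing(Exc_S)$, a lifted arc $\widetilde\gamma(t) = (x(t),y(t))$ lies in $\dot N_E$ exactly when $x(0)=0$, $y(0) \notin \Sing(Exc_S)$, and $x'(0)\neq 0$. For $m$ large enough the $m$-jet of $\gamma$ determines its lift modulo $t^{m+1}$, so $\dot N_E$ is the preimage under the $m$-th truncation $\calX_\infty \to \calX_m$ of a constructible subset, hence cylindrical. For (4), $\overline N_E$ differs from $\dot N_E$ only by the loci where the lift meets $E$ non-transversely or hits a satellite point; both are cut out by finitely many jet equations, so $\overline N_E$ is also cylindrical, and its finite codimension follows either from the explicit chart or from the formula $\codim(\overline N_E) = \lambda(E)$ of Ein--Lazarsfeld--Musta\c{t}\u{a} recalled in the introduction.

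For (1), the chart in the previous step exhibits $\dot N_E$ as a bundle over $E \setminus \Sing(Exc_S)$ with irreducible affine fibers coming from the free higher Taylor coefficients of the lift, hence $\dot N_E$ is irreducible; its closure $\overline N_E$ is then automatically irreducible. The density of $\dot N_E$ in $N_E$ required for (2) is the most technical point: given $\gamma \in N_E$ whose lift fails one of the transversality conditions, I would construct a one-parameter perturbation $\widetilde\gamma_s$ by modifying the local expression of $\widetilde\gamma$ in $(x,y)$, adjusting a suitable Taylor coefficient of $x(t)$ to produce transversality and translating $y(0)$ to push off the finite set of satellite points lying on $E$. For $s \neq 0$ small these perturbations lie in $\dot N_E$ and converge to $\widetilde\gamma$ in the arc space of $S$; composing with $\pi_S$ yields convergence in $\calX_\infty$. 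Once (2) is established, the irreducibility of $N_E$ follows from that of its dense subset $\dot N_E$. Finally, (5) reduces to the valuative criterion of Pl\'enat recalled in the introduction: $\overline N_F \subseteq \overline N_E$ implies $\nu_E \leq \nu_F$, so $\overline N_E = \overline N_F$ gives $\nu_E = \nu_F$, and Lemma \ref{lem:antisym} forces $E = F$.

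The main obstacle will be the density statement in (2): I need to treat uniformly the various ways in which a lifted arc can fail to be transverse (constant arcs centered in $E$, arcs moving along $E$, arcs hitting $E$ with higher-order tangency, or arcs passing through a satellite point of $Exc_S$) and to ensure that the perturbations constructed in the local chart on $S$ correspond, after composition with $\pi_S$, to a legitimate family of arcs in $\calX_\infty$ converging to $\gamma$.
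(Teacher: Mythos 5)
The paper does not prove this lemma; it states it with references to \cite{Nash} and \cite{Bob}, so there is no internal proof to compare against. Your plan broadly follows Nash's classical argument, but the treatment of (4) has a real gap. The sentence that $\overline{N}_E$ ``differs from $\dot{N}_E$ only by the loci where the lift meets $E$ non-transversely or hits a satellite point'' describes $N_E \setminus \dot{N}_E$, not $\overline{N}_E \setminus \dot{N}_E$; a priori the Zariski closure may contain arcs that are not in $N_E$ at all, and these are not cut out by the jet conditions you list. What one actually uses is that, $\CC^2$ being smooth, the truncation maps from $\calX_\infty$ to the finite jet spaces are surjective affine bundles, so the closure of a constructible cylinder is again a cylinder (namely the preimage of the closure of the constructible set at the finite jet level). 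In other words, cylindricity of $\overline{N}_E$ does not follow from (3) by ``adding jet equations''; it needs this separate fact about closures of cylinders over a smooth base. A smaller inaccuracy in (3): the claim that ``the $m$-jet of $\gamma$ determines its lift modulo $t^{m+1}$'' is too strong, since lifting through a sequence of blow-ups costs jet orders; the correct and sufficient statement is that some fixed $m$-jet of $\gamma$ determines the low-order jet of the lift that governs transversality, uniformly over $\dot{N}_E$ because arcs there have a prescribed multiplicity sequence.

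The rest is on the right track. For (2), make explicit that your perturbation produces an algebraic family over the affine line (you adjust finitely many Taylor coefficients polynomially, so this is automatic), because the relevant topology on $\calX_\infty$ is the Zariski topology and ``convergence'' should be read as specialization; with that, $\gamma$ lies in the closure of $\{\gamma_s : s \neq 0\} \subset \dot{N}_E$. For (1), the fibration picture over $E \setminus \Sing(Exc_S)$ gives irreducibility of $\dot{N}_E$, and then of $N_E$ and $\overline{N}_E$ once (2) is in place. For (5), invoking Pl\'enat's valuative criterion together with Lemma~\ref{lem:antisym} is a valid short route.
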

\begin{proof}
     Only (5) is not directly contained in the literature. 
     The sets $\overline{N}_E$ are closures of cylindrical sets,
      and since $(X,O)$ is smooth they are also cylindrical.
\end{proof}

\begin{defn} 
      Given two prime divisors $E$ and  $F$, a \emph{Nash-adjacency} is an inclusion 
      $\overline{N}_F\cont \overline{N}_E$. 
    This defines a partial order $E \leq_N F$ among prime divisors. 
\end{defn}

Note that in previous articles (see \cite{Bob},  \cite{tesis}), what we denote here as Nash-adjacencies were denoted there by \emph{adjacencies}.

Observe that a Nash-adjacency $\overline{N}_F\contneq \overline{N}_E$ implies that the 
topological type of curves associated to $F$ is adjacent (in the sense of section \ref{sec:def_fix}) to the one of 
curves associated to $E$ (in particular, $E\leq_\nu F$).


We will study inclusions of maximal divisorial sets via a characterization using \emph{wedges}, 
which are uni-parametric families of arcs: 
\begin{defn}\label{def:wedge}
      A \emph{wedge} is a morphism: 
             \[\alpha:\mathrm{Spec}(\mathbb{C}[[t,s]])\to X\]
       such that the closed subset $V(t)$ of $\mathrm{Spec}(\mathbb{C}[[t,s]])$  
       is sent to the origin $O\in X$. 

    A wedge is said to be \emph{convergent} if the power series that define it are convergent. 
    In this case we define $\alpha_s:=\alpha|_ {\CC\times \{s\}}$. It is known that we can choose 
    a small enough representative $\Lambda$ of $(\CC,0)$ such that for 
    $s\in\Lambda\setminus\{0\}$, the topological type of the curve parametrized 
   by $\alpha_s$ is constant.  We say that $\alpha_0$ and $\alpha_s$ with 
   $s\in\Lambda\setminus\{0\}$ are respectively \emph{the special arc} and the 
   \emph{generic arcs}  of the wedge. 

   A wedge is said to be \emph{algebraic} if for every $i$, the series $\alpha_i(s,t)$ that 
   define it are algebraic power series,  that is, for any $i$ there exists a polynomial 
    $G_i(t,s,x)$ such that $G_i(t,s,\alpha_i(s,t))=0$.
\end{defn}

More precisely, we are interested in the following wedges (compare with the last 
paragraph of Definition \ref{def:def_fix}): 
 \begin{defn} A \emph{wedge realizing a Nash-adjacency} 
       $\overline{N}_F\contneq\overline{N}_E$ is a wedge $\alpha$  
       such that $\alpha_0$ is in $\dot{N}_F$ and $\alpha_s$ is in ${N}_E$ for $s \neq 0$. 
      Moreover: 
       \begin{enumerate}
              \item we say that $\alpha$ is \emph{dicritical} if and only if $E$ is dicritical for the wedge, 
                   that is, if and only if:  
                         \[ \alpha_s:\mbox{Spec}\ (\CC((s))[[t]]) \to X \] 
                    sends the special point to the 
                   generic point of $E$. In the convergent case, this may be interpreted as the fact 
                   that $\widetilde{\alpha}_s(0)$ moves along the divisor $E$, 
                   when the parameter $s$ varies.
              \item we say  that $\alpha$ \emph{strictly realises the Nash-adjacency} 
                 if and only if $\alpha_s$ lifts transversally to $E$ for all $s\neq 0$ small enough.  
     \end{enumerate}
\end{defn}

Observe that a wedge realising a Nash-adjacency neither has to be dicritical nor has to 
strictly realize an adjacency. There is also no implication between being dicritical and 
strictly realizing an adjacency.

The following theorem was fundamental to prove the Nash Conjecture in \cite{Nash_surfaces} and will be used in the proof of Theorem \ref{theo:main2}. It was originally stated for 
any 2 divisors appearing in the minimal resolution of a singular surface germ but it works in general for any two divisors appearing in a model 
(which is always the case of a pair of divisors) It follows from the results in \cite{Reg3} and \cite{Bob} and can be found in \cite[Thm. A, Prop. 17]{Bob}: 

\begin{theo}\label{theo:wedge_exist}(see \cite{Bob}) 
       Let $E$ and $F$ be two prime divisors over $O\in X$. Then, the following assertions 
       are equivalent: 
\begin{enumerate}
    \item one has the Nash adjacency $\overline{N}_{F}\contneq\overline{N}_E$; 
    \item there exists a wedge $\alpha$ realising the Nash-adjacency 
          $\overline{N}_{F}\contneq\overline{N}_E$; 
    \item for any convergent $\gamma\in \dot{N}_F$, there exists an algebraic wedge 
          $\alpha$ with $\alpha_0=\gamma$, strictly realizing the 
        Nash-adjacency $\overline{N}_{F}\contneq\overline{N}_E$. 
\end{enumerate}
\end{theo}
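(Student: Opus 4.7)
The plan is to establish the cyclic chain of implications $(3) \Rightarrow (2) \Rightarrow (1) \Rightarrow (3)$. The first implication, $(3) \Rightarrow (2)$, is immediate: an algebraic wedge produced by $(3)$ for some convergent $\gamma \in \dot{N}_F$ is in particular a wedge realising the Nash-adjacency.

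For $(2) \Rightarrow (1)$, I would interpret a wedge scheme-theoretically as a morphism $\phi_\alpha : \mathrm{Spec}(\mathbb{C}[[s]]) \to \mathcal{X}_\infty$ whose closed point maps to $\alpha_0 \in \dot{N}_F$ and whose generic point maps to $\alpha_\eta$, the generic arc of the wedge regarded as a $\mathbb{C}((s))$-point. The hypothesis $\alpha_s \in N_E$ for $s \neq 0$ then says $\alpha_\eta \in \overline{N}_E(\mathbb{C}((s)))$; since $\overline{N}_E$ is closed in $\mathcal{X}_\infty$ (Lemma \ref{lem:Nash_sets}), $\phi_\alpha$ factors through $\overline{N}_E$, and in particular $\alpha_0$ lies in $\dot{N}_F \cap \overline{N}_E$. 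To upgrade this pointwise inclusion to $\overline{N}_F \subset \overline{N}_E$, I would invoke Reguera's curve selection lemma on the arc space: the morphism $\phi_\alpha$, viewed as a formal curve in $\mathcal{X}_\infty$ whose generic point lies in $\overline{N}_E$ and whose closed point lies in the cylindrical open set $\dot{N}_F$, witnesses that the generic point of $\overline{N}_F$ itself must specialise into $\overline{N}_E$. The homogeneity of $\mathcal{X}_\infty$ under local analytic automorphisms of $(\mathbb{C}^2, O)$ is what allows any convergent arc of $\dot{N}_F$ to play the role of a sufficiently generic test point.

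The substantive implication is $(1) \Rightarrow (3)$, which I would attack in three layers. First, given $\overline{N}_F \subset \overline{N}_E$ and a convergent $\gamma \in \dot{N}_F$, I would apply the curve selection lemma inside the arc scheme $\overline{N}_E$ at the point $\gamma$ to produce a \emph{formal} wedge whose closed point is $\gamma$ and whose generic arc lies in $\dot{N}_E$. Second, I would rewrite this formal wedge as a formal morphism $\mathrm{Spec}(\mathbb{C}[[s,t]]) \to \mathbb{C}^2$ and approximate it by an algebraic one using Artin approximation, strengthened by Popescu's smoothing theorem. Third, I must ensure that the approximation preserves both the identity of the special fibre (a closed constraint $\alpha_0 = \gamma$) and the strict transversality of the generic arc through $E$ (an open jet condition).

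The main obstacle will be this third step. Ordinary Artin approximation perturbs the whole family simultaneously, whereas here I need a \emph{relative} version that fixes the special arc exactly, and moreover keeps the resulting approximation within the Nash scheme $\overline{N}_E$. Balancing the closed constraint of fixing $\gamma$ against the open constraint of strict transversality, while remaining in $\overline{N}_E$, is the technical heart of the argument; this is precisely the issue already handled in \cite{Bob}, and my plan is to follow those arguments, adapting them carefully to the convergent-arc framework of the present statement.
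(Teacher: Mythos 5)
The paper does not itself prove this theorem: it is imported from \cite{Bob} (cited as Theorem~A together with Proposition~17), and the only comment added in the text is that for $\CC^2$ the curve selection step behind $(1)\Rightarrow(2)$ admits a simpler substitute because $\calX_\infty$ is an infinite-dimensional affine space. Within your chosen chain $(3)\Rightarrow(2)\Rightarrow(1)\Rightarrow(3)$, the first arrow is trivially fine, and your layered outline of $(1)\Rightarrow(3)$ --- curve selection based at $\gamma$, Artin--Popescu algebraization, and the balancing of the closed constraint $\alpha_0=\gamma$ against the open transversality condition --- is the right skeleton and is honestly attributed to \cite{Bob}.

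The genuine gap is in $(2)\Rightarrow(1)$. You correctly deduce $\alpha_0\in\dot N_F\cap\overline N_E$, but the passage from this to $\overline N_F\subset\overline N_E$ cannot be made by appealing to ``homogeneity of $\calX_\infty$ under local analytic automorphisms of $(\CC^2,O)$''. Two curvettas for the same prime divisor $F$ are topologically but in general not analytically equivalent, so there need be no automorphism of $(\CC^2,O)$ carrying one arc of $\dot N_F$ to another; and even when such an automorphism exists, it will in general move $E$ to a different prime divisor, so $\overline N_E$ is not invariant under it. Independently of symmetry considerations, a single closed $\CC$-point $\alpha_0\in\dot N_F\cap\overline N_E$ is entirely compatible with $\overline N_F\not\subset\overline N_E$: $\alpha_0$ can happen to lie in the proper closed subset $\overline N_F\cap\overline N_E$ without the generic point of $\overline N_F$ doing so, and nothing about a wedge through one closed point detects this. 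What actually makes $(2)\Rightarrow(1)$ true is Reguera's finiteness theorem from \cite{Reg3}, refined in \cite{Bob}: at a stable point such as $\alpha_0\in\dot N_F$ the relevant completed local ring of the arc scheme is Noetherian, and a $\CC$-wedge through $\alpha_0$ with generic arc in $\overline N_E$ lifts to a wedge over the residue field of the generic point of $\overline N_F$ whose generic arc still lies in $\overline N_E$; since $\overline N_E$ is closed, the generic point of $\overline N_F$ then lies in $\overline N_E$, which is $(1)$. That lifting is a deep theorem and is the real engine of the equivalence; your proposal replaces it with a transitivity claim that is false.
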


\begin{rem}
 In the proof of $(1)\Rightarrow (2)$, a version of the Curve Selection Lemma from \cite{Reg3} was used. For the case of $\CC^2$ we are interested in, we can substitute it by a simpler proof since the arc space of $\CC^2$ is an infinite dimensional affine space.
 \end{rem}

The following new lemma will be used: 
\begin{lem}\label{lem:dic_wedge}
    Let $\alpha$ be a wedge realising a Nash adjacency $\overline{N}_F\cont \overline{N}_E$. 
    Then there exists $E'\geq_d E$ such that $\alpha$ realizes the Nash-adjacency 
     $\overline{N}_{F}\cont\overline{N}_{E'}$ and is dicritical for $E'$. 

    Besides, if $\alpha$ strictly realizes the Nash-adjacency 
    $\overline{N}_F\cont \overline{N}_E$, then it also strictly realizes the Nash-adjacency 
    $\overline{N}_F\cont \overline{N}_{E'}$.
\end{lem}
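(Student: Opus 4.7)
The plan is to iteratively blow up the target at the closed-point specialization of the lifted generic arc's special point, stopping when that special point becomes the generic point of the current exceptional divisor. Setting $S_0 := S_E$ (the minimal model of $E$) and $E_0 := E$, at step $n$ I consider the lift $\widetilde{\alpha}_s : \Spec(\CC((s))[[t]]) \to S_n$ of the generic arc of the wedge and its special point $q_s^{(n)}$, a $\CC((s))$-rational point of $E_n \otimes \CC((s))$. Either $q_s^{(n)}$ is the generic point of $E_n$ (in which case the wedge is dicritical for $E_n$ and I set $E' := E_n$), or $q_s^{(n)}$ is a closed $\CC$-rational point $q_n \in E_n$, constant in $s$. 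In the latter case I blow up $q_n$ to obtain a new model $S_{n+1}$ with new prime exceptional divisor $E_{n+1} \geq_d E_n \geq_d E$; the strict transforms of the lifted arcs pass through $E_{n+1}$, so $\alpha_s \in N_{E_{n+1}}$ for small $s \neq 0$, and I iterate.

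To prove termination, I would invoke Theorem~\ref{theo:wedge_exist} to pass to an algebraic wedge realizing the same Nash adjacency. The generic arc of an algebraic wedge defines a divisorial valuation $\nu_\alpha(h) := \ord_t(\alpha_s^* h)$ on $\calO_{\CC^2, O}$, equal to $a \cdot \nu_{E^*}$ for a unique prime divisor $E^*$ over $O$ and integer $a \geq 1$. The iterated blowups trace exactly the finite sequence of infinitely near points leading from $S_E$ to the minimal model $S_{E^*}$, and the process stops with $E' := E^* \geq_d E$ because the lifted special point in $S_{E^*}$ is the generic point of $E^*$ by the defining property of the divisor attached to a valuation.

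For the strictness claim, suppose $\alpha$ strictly realizes the Nash adjacency, so that $\widetilde{\alpha}_s$ (for small $s \neq 0$) is a smooth arc meeting $E$ transversally at a smooth point of $Exc_{S_E}$. At each non-dicritical step the blowup center $q_n$ equals the common hit point of these arcs, and a standard local computation in blowup coordinates shows that blowing up the hit point of a smooth arc transversal to the exceptional preserves smoothness and transversality, the strict transform meeting the new exceptional transversally at a smooth point of the total exceptional divisor. Induction over the finitely many non-dicritical steps then yields $\widetilde{\alpha}_s$ transversal to $E' = E_N$ at a smooth point of $Exc_{S_N}$ for small $s \neq 0$, completing the second claim. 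The main obstacle lies in the termination argument, in particular the divisoriality of the valuation $\nu_\alpha$ attached to a wedge (straightforward in the algebraic case, more delicate in the purely formal one) and the compatibility of the resulting divisor $E'$ with the original wedge rather than with its algebraic replacement.
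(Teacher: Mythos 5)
Your iterative blowup at the common hit point and the inductive transversality argument for the strictness claim coincide with the paper's approach. The genuine gap, which you partly flag yourself, is in the termination step. First, passing to an algebraic wedge $\alpha'$ via Theorem~\ref{theo:wedge_exist}(3) produces a \emph{different} wedge sharing only the special arc with $\alpha$; the lemma asserts that $\alpha$ \emph{itself} is dicritical for some $E'\geq_d E$, and nothing you would establish about $\alpha'$ transfers back to $\alpha$. Second, the assertion that the generic arc of an (algebraic) wedge defines a divisorial valuation is not an independent, ``straightforward'' fact: a discrete rank-one valuation centered at $O$ in dimension two which is not divisorial is infinitely singular, meaning its sequence of centers is infinite, so claiming divisoriality of $\nu_\alpha$ is exactly claiming the termination you are after, not something you may feed in.

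The paper argues termination directly, staying with the given wedge $\alpha$. Fix two generic parameter values $s_1\neq s_2$. As long as the wedge is not yet dicritical in the current model, the liftings $\widetilde{\alpha}_{s_1}$ and $\widetilde{\alpha}_{s_2}$ both pass through the common center $p_n$, and Noether's formula gives a strict drop of the intersection multiplicity $I_{p_n}(\widetilde{\alpha}_{s_1},\widetilde{\alpha}_{s_2})$ at each blowup. Since this number is finite for a wedge that is not constant in $s$ (and a constant wedge cannot strictly realize a nontrivial Nash adjacency), the two liftings separate after finitely many steps, at which point the special point of the lift moves with $s$ and the wedge is dicritical for the divisor $E'$ created at that stage. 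This finiteness of $I(\widetilde{\alpha}_{s_1},\widetilde{\alpha}_{s_2})$ is precisely what your valuation-theoretic reformulation would need as input to conclude that $\nu_\alpha$ is divisorial, so the two viewpoints ultimately agree, but the intersection-multiplicity bound must come first and it is available without any algebraicity assumption or replacement of $\alpha$.
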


\begin{proof}
Consider the minimal model of $E$. If the wedge is not dicritical we can assume that there 
is a point $p\in E$ where all the generic arcs $(\alpha_s)_{s \neq 0}$ meet. 
We blow-up $p$ and keep blowing up points $p_1\in E_p$, $p_2$,..,$p_n$ if the liftings 
of the generic arcs still meet at $p_i\in E_{p_{i-1}}$. After a finite number of blow ups, 
the intersection multiplicity $I_{p_i}(\widetilde{\alpha}_{s_1}, \widetilde{\alpha}_{s_2})$ 
goes down and then after another finite number of blow ups we make the wedge dicritical.  
\end{proof}

We prove now Theorem \ref{theo:main2} of the introduction, that is, that the existence 
of an inclusion $\overline{N}_F\contneq\overline{N}_E$  only depends on the combinatorics 
of the pair $(E,F)$. It is based on a similar result in \cite{Bob} (Theorem D) and we 
quote \cite{Bob} for technical aspects that are fully explained there.
Using Theorem \ref{theo:wedge_exist}, we see that it  is equivalent to the following: 
 
\begin{theo}\label{theo:Nash_top} 
      Let $(X,O)$ be smooth. 
      Assume that there exists a wedge $\alpha$ realizing the Nash-adjacency 
      $\overline{N}_F\contneq\overline{N}_E$. If $(E',F')\equiv(E,F)$, then there exists a 
      wedge $\beta$ realizing the Nash-adjacency $\overline{N}_{F'}\contneq\overline{N}_{E'}$.
\end{theo}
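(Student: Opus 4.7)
The plan is to transport the given wedge $\alpha$ for $(E,F)$ to a wedge $\beta$ for the combinatorially equivalent pair $(E',F')$ by moving free points in a continuous family, following the strategy of \cite{Bob} with appropriate modifications.

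First I would perform a double reduction. By Lemma \ref{lem:dic_wedge}, after possibly replacing $E$ by a larger divisor $\tilde{E}\geq_d E$, we may assume $\alpha$ is dicritical for $\tilde{E}$ and still strictly realizes $\overline{N}_F\subsetneq\overline{N}_{\tilde{E}}$; by Theorem \ref{theo:wedge_exist} we may further assume $\alpha$ is algebraic. The operation of passing from $(E,F)$ to $(\tilde{E},F)$ adds a prescribed sequence of blow-ups at free points that can equally well be performed on $(E',F')$ to produce $(\tilde{E}',F')$ with $(\tilde{E},F)\equiv(\tilde{E}',F')$, so it suffices to prove the theorem under these stronger hypotheses. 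This reduction is crucial because once $\alpha$ is strict and dicritical, the lifting of its generic arcs to the minimal model of $\tilde{E}+F$ becomes geometrically rigid: they hit a dense open subset of $\tilde{E}$ transversely, which gives a lot of control for perturbation arguments.

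Next I would organize the pairs of the fixed combinatorial type of $(E,F)$ into a connected parameter space $U$ (essentially the moduli of admissible configurations of free infinitely near points in the resolution tower, see \cite{BobTesis}, \cite{Roe}). Both $(E,F)$ and $(E',F')$ correspond to points $u_0,u_1\in U$ which can be joined by a connected real-analytic arc. I would then prove a local continuity statement: given a realizing, strict, dicritical, algebraic wedge at $u$, construct one at every $u'$ sufficiently close to $u$. The existence of $\beta$ then follows by propagating along the arc and using that strictness and dicriticality are open conditions.

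The heart of the argument is this local continuity step. The passage from $u$ to $u'$ changes only analytic data — the positions of free points on the successive components of the exceptional divisor — while keeping the combinatorial skeleton of the minimal model of $E_u+F_u$ fixed. I would realize the change by a family of analytic coordinate transformations on the charts of this minimal model and then try to descend to an analytic family of automorphisms of $(\CC^2,O)$; where such a descent is obstructed, I would compensate by perturbing the arc parameters $(t,s)$ of the wedge itself, absorbing the difference into a modification of $\alpha$. This is the place where the techniques from \cite{Bob} that fall outside pure algebraic geometry — convergent and formal power series manipulations that track simultaneously the displacement of free points and the wedge that realizes the adjacency — are indispensable, and where the promised improvements over \cite{Bob} enter.

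The main obstacle I anticipate is exactly this descent-plus-compensation step. Arbitrary infinitesimal displacements of free points are not induced by ambient automorphisms of $(\CC^2,O)$, and one must simultaneously match the wedge to the displaced $E_u$ and to the displaced $F_u$, which may share portions of the blow-up tower. Establishing the existence of a compatible perturbation of the wedge for every perturbation of the pair — and doing this algebraically, so that the propagation along the arc in $U$ yields an algebraic wedge at $u_1$ — will be the main technical work.
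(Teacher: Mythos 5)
Your reduction to an algebraic, strict, dicritical wedge via Theorem~\ref{theo:wedge_exist} and Lemma~\ref{lem:dic_wedge} is correct and matches the paper. But from there your strategy diverges, and there is a genuine gap at exactly the step you flag as ``the main technical work.'' Your plan is a connectivity-plus-local-continuity argument: parametrize all pairs of the given combinatorial type by a connected space $U$, join $(E,F)$ to $(E',F')$ by a path, and show that possessing a realizing wedge is an open condition along the path. The crucial local step---perturbing the wedge to match a nearby configuration of free points, by descending chart automorphisms of the model to $(\CC^2,O)$ and compensating the obstruction by modifying $\alpha$---is stated as a goal but never carried out. You correctly observe that displacements of free points are not induced by ambient automorphisms, but you give no mechanism to construct the compensating perturbation, and it is not clear such a mechanism exists at the needed generality (one must simultaneously track the displaced $E$, the displaced $F$, and the branching behaviour of the wedge). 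Even granting the local step, propagating along the whole path from $u_0$ to $u_1$ would require an openness-plus-compactness argument that is also not supplied. As written, the proposal is an outline, not a proof.

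It is worth contrasting with what the paper actually does, because the paper's route sidesteps the descent obstruction entirely rather than solving it. After the same reduction, the paper globalizes the algebraic dicritical wedge to a proper map $\Phi:\Sigma\to\CC^2$ (Proposition~22 of \cite{Bob}), takes the fiber product with the minimal model $S_{E,F}$, resolves and applies Stein factorization to obtain a finite branched cover $\Psi:W\to\widetilde U$ of a tubular neighbourhood of the exceptional set. The key ingredient is then a \emph{diffeomorphism} $\tau:\widetilde U\to\widetilde U'$ between tubular neighbourhoods for $(E,F)$ and for $(E',F')$, constructed so as to be a biholomorphism near the singular points of the exceptional divisor, near the non-exceptional part of the branching locus, and near the point where $\widetilde\alpha_0$ meets $F$. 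The composition $\tau\circ\Psi$ is unramified off a complex curve, so the Grauert--Remmert theorem produces a complex structure on $W$ making it holomorphic, and unwinding the diagram gives the wedge $\beta$ for $(E',F')$ in one shot, with no continuity argument and no automorphisms of $(\CC^2,O)$. Relatedly, your characterization of the non-algebro-geometric input of \cite{Bob} as ``convergent and formal power series manipulations'' is off; it is this topological branched-cover transport together with Grauert--Remmert.
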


\begin{proof}
    Identify $(X,O)$ with $(\CC^2,0)$ by a choice of local coordinates. 

The theorem is trivial if $E\leq_dF$, because in this case $F'$ also dominates 
$E'$, which implies that $\overline{N}_{F'}\contneq\overline{N}_{E'}$.  Therefore, 
we can assume that this is not the case.

Using point (3) of Theorem \ref{theo:wedge_exist}, we can assume that 
  $\alpha:(\CC^2,0)\to (\CC^2,0)$ is an algebraic wedge that strictly realizes 
  the adjacency for $(E,F)$.

We can reduce it to the case of $\alpha$ a dicritical wedge as follows. If $\alpha$ is not 
dicritical for $(E,F)$, following Lemma \ref{lem:dic_wedge}, we can take $E_2>_d E$ 
such that it is dicritical for $(E_2,F)$. Now, we take $E_2'>E'$ such that 
$(E_2',F')\equiv (E_2,F)$. If we can find $\beta$ realizing the adjacency for $(E_2',F')$, 
then it results from the domination $E_2'>_d E'$  that $\beta$ also realizes the adjacency 
for $(E',F')$.  

So, let us assume that $\alpha$ is dicritical. 
 
Let: 
     $$\pi_{E,F}: S_{E,F}\to (\CC^2,0)$$ 
   be the minimal model of $E+F$. Then, since $E\nleq_dF$, we have that the liftings  
   to $S_{E,F}$ of $\alpha_0$ and $\alpha_s$ for small enough $s\neq 0$  
   hit $F$ and $E$ respectively.

By Proposition 22 in \cite{Bob} there is a smooth algebraic surface $\Sigma$, 
a proper morphism: 
    $$\Phi: \Sigma \to \CC^2$$ 
and an embedding: 
       $$i:(\CC^2,0)\to \Sigma$$ 
  such that the following diagram commutes: 
\begin{equation}\label{dia:0}\xymatrix{
\Sigma\ar^{\Phi}[rr]&&(\CC^2,0)\\
(\CC^2,0)\ar^{i}[u]\ar^{\alpha}[urr]&&
}
\end{equation}
In other words, the wedge $\alpha$ is the germ of the mapping $\Phi$ around a point 
$p:=i(0)\in \Sigma$ and we can think about the coordinates $(s,t)$ as local coordinates around 
$p$. Note that  the map $\Phi$ is not necessarily a model above $(\CC^2,0)$, as it 
is not always bimeromorphic (it may have generic degree $\geq 2$).   
We denote by $L_s$ and $L_t$ the images by $i$ of the $s$-axis (defined by $t=0$) 
and the $t$-axis of $(\CC^2)_{(s,t)}$ respectively. We denote by $\Omega$ 
a small ball around $p$. The arc $\alpha_0$ is identified with $\Phi|_{L_t}$. 
Its lifting to $S_{E,F}$ is denoted by $\tilde{\alpha}_0$.
  
Let $U$ be a Milnor ball for the image of $\alpha_0$ around the origin of $\CC^2$. Consider 
$\widetilde{U}:=\pi_{E,F}^{-1}(U)$, which is a small tubular neighborhood at the exceptional 
divisor of $\pi_{E,F}$.
 
Define $V:=\Phi^{-1}(U)$. We assume that $\Omega$ was chosen such that $\Omega\:  \cont \:  V$. 
Consider the fiber product $\overline{V} := V\times_U\widetilde{U}$.
Denote by $\sigma_1$, $\sigma_2$ its projections to the each of the factors. 
Let $\rho:\widetilde{V}\to \overline{V}$ be the minimal resolution of $\bar{V}$. 
Then $\widetilde{\Phi}:=\sigma_2\:   \comp \:   \rho$ is a proper map:  

\begin{equation}\label{dia:1}\xymatrix{
\widetilde{V}\ar^{\kappa}[ddrr]\ar^{\widetilde{\Phi}}[drrrr] \ar^{\rho}[rrd]&&&&\\
&&\overline{V}\ar^{\sigma_1}[d]\ar^{\sigma_2}[rr]&&\widetilde{U} \:  
    \cont \:  S_{E,F}\ar^{\pi_{E,F}}[d]\\
&&V\ar^{\Phi}[rr]&&U   \:  \cont   \: \CC^2\\
&&(\CC^2,0)\ar^{i}[u]\ar_{\alpha}[urr]&&
}
\end{equation}
We denote by $\widetilde{L}_s$ and $\widetilde{L}_t$ the strict transform by $\kappa$ of $L_s$ and $L_t$ in  $\widetilde{V}$. 

Denote $A:=\widetilde{\Phi}^{-1}(\pi_{E,F}^{-1}(0))$ and consider its decomposition 
into irreducible components: 
    $$A=\cup_iA_i.$$
Consider the total transform of $L_s+L_t$ in $\widetilde{V}$ by $\kappa$, and its decomposition into irreducible components. Then: 
\begin{enumerate}
\item there is a unique non-compact irreducible component that contains $\widetilde{L}_t$;
\item we have that $\widetilde{\alpha}_0 \:  \comp  \:  \kappa|_{\widetilde{L}_t}= 
     \widetilde{\Phi}|_{\widetilde{L}_t}$; 
\item the rest of components are contained in $A$ (and are compact); 
\item $\widetilde{L}_t$ is transverse to a compact component; 
\item there is only one compact component that contains $\widetilde{L}_s$; we denote it by $A_s$; 
\item since $\alpha$  is a dicritical wedge, $A_s$ is dicritical for $\widetilde{\Phi}$ and $\widetilde{\Phi}(A_s)=E$. 
\end{enumerate}

Take the Stein factorization of $\widetilde{\Phi}$: 
\begin{equation}\label{dia:2}
    \xymatrix{
\widetilde{V}\ar^{\theta}[rrrr]\ar^{\widetilde{\Phi}}[rrrrd] &&&&W\ar^{\Psi}[d]\\
&&&&\widetilde{U}}
    \end{equation}

where $\theta$ is proper and birational, $W$ normal and $\Psi:W\to \widetilde{U}$ finite. Then $\Psi$ is a branched cover. 

Let $R \:  \cont \:  W$ be the \emph{ramification locus} of $\Psi$. 
 
We have the following: 
\begin{enumerate}
\item a divisor $A_i$ is dicritical for $\widetilde{\Phi}$ if and only if $A_i$ 
      is not collapsed  by $\theta$;
\item the divisor $A_s$ (that contains $\widetilde{L}_s\cont \widetilde{V}$) is 
       dicritical for $\widetilde{\Phi}$, thus, it is not collapsed by $\theta$ and $\theta(A_s)$ 
       is a divisor in $W$; 
\item since the lifting $\widetilde{\alpha}_s$ of the arc $\alpha_s$ is transverse to $E$, 
      the curve $\theta(A_s)$ is not contained in the ramification locus, that is 
      $\theta(A_s)\nsubseteq R$. 
\end{enumerate}

Define the \emph{branching locus} $\Delta:=\Psi(R)$. 
We decompose $\Delta=\Delta^{exc}\cup \Delta'$, where $\Delta^{exc}$ is the part collapsed by $\pi_{E,F}$, that is \emph{the exceptional part}. 

Let $\pi_{E',F'}:S_{E', F'}\to \CC^2$ be the minimal model of $E'+F'$. 
Then, $\pi_{E',F'}^{-1}(0)$ has the same dual graph as $\pi_{E,F}^{-1}(0)$. 
Let $\widetilde{U}'$ be a tubular neighbourhood of $\pi_{E',F'}^{-1}(0)$. 

As in \cite[Section 6, Proof of Prop. 25 Part I or Proof of Prop. 32]{Bob}, 
we construct a diffeomorphism: 
$$\tau: \widetilde{U}\to \widetilde{U}'   \:   \cont \:  S_{E',F'}$$ such that: 
\begin{enumerate}[(i)]
   \item $\tau(E)=E'$, $\tau(F)=F'$ , $\tau(\pi_{E,F}^{-1}(0))=\pi_{E',F'}^{-1}(0)$
   \item $\tau$ is a biholomorphism in neighbourhoods of the singularities of 
     $\pi_{E,F}^{-1}(0)$, the points where $\Delta'$ meet $\pi_{E,F}^{-1}(0)$ and the point 
     where $\widetilde{\alpha}_0$ meets $F$. 
\end{enumerate} 
Note that $(ii)$ implies that $\beta_0:=\pi_{E',F'}\comp\:  \tau\:  \comp \: \widetilde{\alpha}_0$ is a holomorphic arc whose lifting to $\tilde{U}'$  is transversal to $F'$ and equals 
$\tau \:   \comp   \:   \widetilde{\alpha}_0$. We denote the lifting by $\tilde{\beta}_0$ 
(i.e. $\tilde{\beta}_0=\tau\:  \comp \:  \tilde{\alpha}_0$). 

Define the composition: 
     $$\lambda:=\tau \:  \comp \:  \Psi.$$ 
The morphism $\lambda$ is 
unramified over $\widetilde{U}'\setminus \tau(\Delta)$ where $\tau(\Delta)$ is a 
complex analytic curve in $\widetilde{U}'$, since $\tau$ satisfies (ii). 


Then, using a theorem of Grauert and Remmert  (see \cite{GRinit}, \cite{GR} or \cite{BN}), 
as in the proof of \cite[Lemma 28]{Bob}, we deduce the existence of a unique 
normal holomorphic structure in $W$ which makes $\lambda$ a complex analytic morphism. 
We denote by $W'$ the variety $W$ with this new complex structure, by $\overline{\tau}$ 
the identity diffeomorphism (as topological spaces) from $W$ to $W'$ and by $\Psi'$ 
the complex analytic morphism from $W'$ to $\widetilde{U'}$ given by $\lambda$ 
(or more precisely  $\lambda  \:  \comp  \:  \overline{\tau}^{-1}$): 

\begin{equation}\label{dia:3}\xymatrix{
W\ar^{\lambda}[rrd]\ar^{\Psi}[d]\ar^{\bar{\tau}}[rr]&&W'\ar^{\Psi'}[d]\\
\widetilde{U}\ar^{\tau}[rr]&&\widetilde{U}'
}\end{equation}

In the left handside of diagram (\ref{dia:paralelo}),  one can find the diagrams 
(\ref{dia:1})-(\ref{dia:2}) that we have associated with the wedge $\alpha$. 
Diagram (\ref{dia:3}) appears in the center. In the rest of the proof we will complete the right handside of diagram~(\ref{dia:paralelo}).

Denote $U':=\pi_{E',F'}(\widetilde{U}')$. 

First observe that since $\widetilde{V}$ is smooth, the mapping $\theta:\widetilde{V}\to W$ 
can be factorised as: 
\begin{equation}\label{dia:4}\xymatrix{
&&&&\widetilde{W}\ar^{\mu}[d]\\
\widetilde{V}\ar^{\nu}[urrrr]\ar^{\theta}[rrrr]&&&&W
}\end{equation}
where $\mu$ is the minimal resolution of $W$ and $\nu$ is a sequence of blow ups of points. 

Let $\mu':\widetilde{W}'\to W'$ be the minimal resolution of $W'$. 
Since $\tau $ is a homeomorphism and the combinatorics of the minimal resolution of a surface singularity only depends on the topology of the link, the dual graphs of the divisors $\mu^{-1}(\pi_{E,F}^{-1}(0))$ and $\mu'^{-1}(\pi_{E',F'}^{-1}(0))$ are the same. 

We define $\widetilde{V}'$ as the space obtained from $\widetilde{W}'$ by a sequence of blow ups $\nu':\widetilde{V}'\to \widetilde{W}'$ such that the dual graph of $B':=\nu'^{-1}(\mu'^{-1}(\pi_{E',F'}^{-1}(0))$ is isomorphic to the dual graph of $B:=\nu^{-1}(\mu^{-1}(\pi_{E,F}^{-1}(0))$. 

Define $\theta':=\nu'  \:  \comp \:  \mu'$ and $\widetilde{\Phi}':=\Psi'   \:  \comp \:   \theta'$.

\begin{equation}\label{dia:paralelo}\xymatrix{
\widetilde{V}\ar@/^1.5pc/[rrrrrrrrrr]^{\overline{\overline{\tau}}} \ar^{\kappa}[ddrr] \ar^{\widetilde{\Phi}}[drrrr] \ar^{\rho}[rrd]\ar^{\theta}[rrrr]&&&& W\ar^{\overline{\tau}}[rr]\ar^{\Psi}[d] \ar^{\lambda}[drr]&& W'\ar_{\Psi'}[d]&&&&\widetilde{V}'\ar_{\theta'}[llll]\ar_{\widetilde{\Phi}'}[lllld]\ar^{\xi_1}[dlll]\ar^{\kappa'}[ddll] \\
&&\overline{V}\ar^{}[d]\ar[rr]&&\widetilde{U} \:  \cont \:  S_{E,F}\ar^{\pi_{E,F}}[d]\ar^{\tau}[rr]&&\widetilde{U}'\ar_{\pi_{E',F'}}[d]& T\ar^{\xi_2}[dl]&&&\\
&&V\ar^{\Phi}[rr]&&U  \:  \cont   \:  \CC^2&&U'&&V'\ar^{\upsilon}[lu]\ar_{\Phi'}[ll]&&
}
\end{equation}

Notice that the pairs of complex spaces: 
$$(W,\Psi^{-1}(\pi_{E,F}^{-1}(0)\cup\widetilde{\alpha}_0))$$ 
$$(W',\Psi'^{-1}(\pi_{E',F'}^{-1}(0)\cup\widetilde{\beta}_0))$$
are diffeomorphic because $\tau$ sends $\widetilde{\alpha}_0$ to $\widetilde{\beta}_0$. 
Hence the pairs: 
\begin{eqnarray}
\label{eq:tildeV}
(\widetilde{V}, \widetilde{\Phi}^{-1}(\pi_{E,F}^{-1}(0)\cup \widetilde{\alpha}_0))\\
(\widetilde{V}', \widetilde{\Phi}'^{-1}(\pi_{E',F'}^{-1}(0)\cup \widetilde{\beta}_0))\end{eqnarray}
are diffeomorphic. To construct a diffeomorphism $\overline{\overline{\tau}}$ from  $\widetilde{V}$ to $\widetilde{V}'$ 
that makes the diagram~(\ref{dia:paralelo}) commutative, we first lift $\overline{\tau}$ 
to a diffeomorphism  of the minimal resolutions of $W$ and $W'$ obtaining 
$\overline{\tau}':\widetilde{W}\to \widetilde{W}'$. Now, recall that $\widetilde{V}$ and $\widetilde{V}'$ are obtained by a sequence of blow ups with the same combinatorics 
from  $\widetilde{W}$ and $\widetilde{W}'$. Then, we lift $\overline{\tau}'$ step by step following this sequence of the blow ups, in both origin and target, starting 
from $\widetilde{W}$ and $\widetilde{W}'$ respectively. It is clear we can lift it for a single blow up, so finally we can find a diffeomorphism 
$\overline{\overline{\tau}}:\widetilde{V}\to \widetilde{V}'$ such that the diagram commutes. 

In particular, a divisor is collapsed by $\widetilde{\Phi}$ if and only if its image by $\overline{\overline{\tau}}$ is collapsed by $\widetilde{\Phi}'$. Also a divisor $D_1$ in $\widetilde{V}$ is dicritical onto some divisor $D_2$ in $\pi_{E,F}^{-1}(0)$ if and only if  $\overline{\overline{\tau}}(D_1)$ is dicritical onto $\tau(D_2)$ in $\pi_{E',F'}^{-1}(0)$.

Let $C$ be the divisor that is collapsed by $\kappa:\widetilde{V}\to V$ and let $C'$ be its image by $\overline{\overline{\tau}}$. 
Then, since $\overline{\overline{\tau}}$ is a diffeomorphism, we can collapse $C'\cont \widetilde{V}'$ and obtain a smooth surface $V'$. 
We call this map $\kappa':\widetilde{V}'\to V'$.  

Observe that a divisor in $\widetilde{V}'$ is collapsed by $\kappa'$ if and only if it is the image by $\overline{\overline{\tau}}$ of a divisor in $\widetilde{V}$ that is collapsed by $\kappa$ respectively.

Let ${L}_t'$ and ${L}_s'$ be the images of $\widetilde{L}_s$ and $\widetilde{L}_t$ by 
$\kappa'  \:  \comp \:  \overline{\overline{\tau}}$. The union meets
in a normal crossing in $V'$ because they are not contracted by $\kappa'$, $\overline{\overline{\tau}}$ is a diffeomorphism and the smoothness and transversality depend only on 
the combinatorics of the divisor $C'$ that we collapse (that is the same as the one of $C$).

Applying Stein factorization to the composition 
$\pi_{E',F'}\:  \comp \:  \widetilde{\phi}':\widetilde{V}'\to U'$ we obtain a normal space $T$, 
a proper and birational mapping $\xi_1:\widetilde{V}'\to T$ and a finite mapping $\xi_2:T\to U'$.

The mapping $\widetilde{V}'\to T$ is the result of collapsing the divisor 
   $ \widetilde{\Phi}'^{-1}(\pi_{E',F'}^{-1}(0))$. 
Since $\kappa':\widetilde{V}'\to V'$ collapses $C'$ which is a union of components of 
    $\widetilde{\Phi}'^{-1}(\pi_{E',F'}^{-1}(0))$, then there is a morphism $\upsilon:V'\to T$. 

Define $\Phi'=\xi_2\:  \comp \:  \upsilon$ and let  $\beta$ be the germ of $\Phi'$ at the point 
   $p':=L_s'\cap L_t'$. 
Choose coordinates $(t,s)$ around $p'$ so that the $t$-axis is $L_t'$ and the $s$-axis is $L_{s}'$.

We see that $\beta$ is a dicritical wedge realizing the Nash-adjacency for $(E', F')$. 
We  think about $\beta$ as being defined in 
$\Omega':=\kappa'(\overline{\overline{\tau}}(\kappa^{-1}(\Omega)))$.

Indeed, by construction we have that: 
\begin{itemize}
\item The curve $L_{t}'$ lifts to $\widetilde{U}'$ to the lifting of $\beta_0$, that is, to 
     $\tau\comp\widetilde{\alpha}_0$ which is transversal to $F'$ at a smooth point of the 
     exceptional divisor.   
\item The curve $L_s'$ lifts to $\widetilde{V}'$ by $\kappa'$ to $\overline{\overline{\tau}}(A_s)$,  
     which is transformed by $\widetilde{\Phi}'$ to $E'$ in a dicritical way. Thus the special point 
     of the lifting of the generic arc moves along $E'$ when $s$ moves. 
\item The lifting $\widetilde{\beta}_s$ meets $E'$ transversely because $\widetilde{\Phi}'$ 
     is unramified at $\widetilde{L}_s'$ (this is because $\widetilde{\Phi}$ is unramified at 
     $L_s\cont A_s$ and $\widetilde{\Phi}'$ is obtained from $\widetilde{\Phi}$ by composition 
     with a diffeomorphism).
\end{itemize}
\end{proof}

\subsection{Consequences of Theorem \ref{theo:Nash_top} and some conjectures}\label{sec:cyl}

$\ $
\medskip 

In this section, we use Theorem \ref{theo:Nash_top} to look for necessary or sufficient conditions 
for the existence of Nash-adjacencies. 
\medskip

Let $E$, $E'$ be prime divisors.
We write $E\equiv_{\geq i} E'$ to say that $E$ and $E'$ have the same combinatorics 
and that their contact 
order is greater or equal than $i$.  

We denote by $\overline{Cont^q(E)}$ the closure of the set of arcs  whose lifting 
to the minimal model 
of $E$ meet $E$ with intersection multiplicity $q$. 
Following the terminology of \cite{ELM} or \cite{I_max}, it is the 
\emph{maximal divisorial set} associated with the valuation $q\cdot\nu_E$. 

\begin{lem}\label{lem:cyl1} 
      Given a prime divisor $E$, for any $i$, the intersection: 
           $$\bigcap_{E'\equiv_{\geq i} E} \overline{N}_{E'}$$ 
      is a closed cylindrical set. Moreover, there exists a finite set of divisors 
      $\{E_j\}_{j\in J}$ satisfying  $E_j\equiv_{\geq i}E$ for $j\in J$ such that:  
            $$\bigcap_{E'\equiv_{\geq i} E} \overline{N}_{E'}=\bigcap_{j\in J} \overline{N}_{E_j}.$$ 
\end{lem}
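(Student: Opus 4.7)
The plan is to reduce the (a priori infinite) intersection to a finite one by invoking Noetherianity of a finite-level jet space. Both conclusions of the lemma then follow at once, since any finite intersection of closed cylindrical sets at the same truncation level is again closed and cylindrical.

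First, by Lemma~\ref{lem:Nash_sets}~(4), each $\overline{N}_{E'}$ is a closed cylindrical set, i.e.\ $\overline{N}_{E'}=\pi_{m_{E'}}^{-1}(C_{E'})$ for some closed subset $C_{E'}$ of a finite level jet space $\calX_{m_{E'}}$, where $\pi_{m_{E'}}\colon \calX_\infty\to \calX_{m_{E'}}$ is the truncation map. The crucial step is to produce a \emph{uniform} truncation level $m$ that works simultaneously for every $E'\equiv_{\geq i} E$: after composing with the further truncation $\pi_{m,m_{E'}}\colon \calX_m\to \calX_{m_{E'}}$ one may replace $C_{E'}$ by $\pi_{m,m_{E'}}^{-1}(C_{E'})\subseteq \calX_m$. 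To justify such a uniform $m$, I would argue that whether an arc $\gamma$ belongs to $N_{E'}$ is decided by tracking the successive strict transforms of $\gamma$ along the chain of blow-ups producing the minimal model of $E'$, and checking at each stage that they meet the prescribed infinitely near point and finally $E'$ itself. Each of these conditions is cylindrical with a level governed by the multiplicity sequence of $E'$; since that sequence depends only on the combinatorial type of $E'$, which equals that of $E$, a uniform $m$ does exist.

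With a uniform $m$ in place, one has the identity
\[
   \bigcap_{E'\equiv_{\geq i} E}\overline{N}_{E'} \;=\; \pi_m^{-1}\!\left(\bigcap_{E'\equiv_{\geq i} E} C_{E'}\right).
\]
The ambient space $\calX_m$ is a scheme of finite type over $\CC$, hence a Noetherian topological space; therefore any intersection of closed subsets equals a finite sub-intersection. This yields a finite index set $J$ and divisors $\{E_j\}_{j\in J}$ with $E_j\equiv_{\geq i} E$ such that $\bigcap_{E'} C_{E'}=\bigcap_{j\in J} C_{E_j}$. Applying $\pi_m^{-1}$ delivers the claimed equality $\bigcap_{E'\equiv_{\geq i} E}\overline{N}_{E'}=\bigcap_{j\in J}\overline{N}_{E_j}$; being a finite intersection of closed cylindrical sets at a common level, it is itself closed and cylindrical, proving both parts of the lemma.

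The main obstacle I anticipate is the uniform-level claim. The subtlety is that the $E'$'s differ through the positions of their free infinitely near points, which form a positive-dimensional moduli space, and one must check that these continuously varying positions do not force the jet order needed to detect lifting through $E'$ to grow. Working locally in charts around each successive blow-up center, only the multiplicity of the arc's transform at that center intervenes; the combinatorial type pins these multiplicities down, so a uniform level is available. I expect this verification to constitute the bulk of the technical work.
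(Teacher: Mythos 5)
Your argument is essentially the paper's proof, merely filled out in more detail: you invoke cylindricity of each $\overline{N}_{E'}$ at a uniform truncation level (the paper asserts this in one sentence, you sketch why the level can be taken uniform via the fixed combinatorial type and multiplicity sequence) and then apply Noetherianity of the finite-level jet scheme to reduce the intersection to a finite sub-intersection. The approach and the key ingredients match the paper exactly; your elaboration of the uniform-level claim is sound and is precisely the step the paper leaves implicit.
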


\begin{proof}
The sets $\overline{N}_{E'}$ are closed and cylindrical for the same order of truncation. 
Then, the intersection occurs at the level of truncation, that is,  in a finite dimensional set, 
where the intersection of an infinite number of algebraic sets is the intersection of only a finite number of them. 
\end{proof}

\begin{cor}\label{cor:Nash_max_cont}
Let $(E,F)$ be a pair of prime divisors such that we have the Nash adjacency 
$N_F\subseteq N_E$.  If $(E',F')$ is 
a pair of divisors with $E'$ and $F'$ combinatorially equivalent to $E$ and $F$ respectively 
and $Cont(E',F')\geq Cont (E,F)$,  then we have the Nash adjacency $N_{F'}\subseteq N_{E'}$.    
\end{cor}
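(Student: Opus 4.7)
The plan is to split into two cases according to $m := Cont(E,F)$ versus $m' := Cont(E',F')$. If $m' = m$, then the pair $(E',F')$ has the same combinatorial type as $(E,F)$, and the conclusion $\overline{N}_{F'} \subseteq \overline{N}_{E'}$ is immediate from Theorem~\ref{theo:main2}. The remainder of the argument would handle the case $m' > m$.

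In that case, the first step is a combinatorial observation: the $(m+1)$-th blow-up center $p_m$, common to the minimal models of $E'$ and $F'$, simultaneously carries the combinatorial character prescribed at position $m+1$ by $E$ (through $E \equiv E'$) and by $F$ (through $F \equiv F'$). A short case analysis would show that if either $E$ or $F$ had a \emph{satellite} $(m+1)$-th center, then any pair $(E_\ast, F_\ast)$ with $E_\ast \equiv E$, $F_\ast \equiv F$ sharing the first $m$ centers would be forced either to share the $(m+1)$-th center as well, or else to carry distinct satellite points of different proximity types in the common model obtained after the first $m$ blow-ups; in either case one gets a contradiction with $Cont(E,F) = m$ together with $Cont(E',F') = m' > m$. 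Hence both $E$'s and $F$'s $(m+1)$-th centers must be free.

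Using this, I would construct a one-parameter family of divisors $(F_t)_{t \in (\CC,0)}$ with $F_0 = F'$, each $F_t \equiv F$, and $Cont(E', F_t) = m$ for $t \neq 0$: displace the $(m+1)$-th center $p_m$ of $F'$ to a nearby free point $p_m(t) \neq p_m$ on the same exceptional component, and propagate the remaining blow-ups according to the combinatorial recipe of $F'$. For $t \neq 0$ one has $(E', F_t) \equiv (E, F)$, so Theorem~\ref{theo:main2} yields $\overline{N}_{F_t} \subseteq \overline{N}_{E'}$.

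To conclude $\overline{N}_{F'} \subseteq \overline{N}_{E'}$, using density of $\dot{N}_{F'}$ in $\overline{N}_{F'}$ provided by Lemma~\ref{lem:Nash_sets}(2), it suffices to show that any convergent arc $\gamma \in \dot{N}_{F'}$ lies in $\overline{N}_{E'}$. Given such a $\gamma$ parametrising a curvetta of $F'$, I would build a wedge of arcs $(\gamma_t)_t$ with $\gamma_0 = \gamma$ and $\gamma_t \in \dot{N}_{F_t}$ for $t \neq 0$, by perturbing the curvetta in synchrony with the displacement $p_m \mapsto p_m(t)$, in the spirit of the arguments in Proposition~\ref{prop:val_def2}. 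Since each $\gamma_{t \neq 0}$ belongs to $\overline{N}_{E'}$ and $\overline{N}_{E'}$ is closed, the limit $\gamma = \gamma_0$ lands in $\overline{N}_{E'}$. The main obstacle I anticipate is precisely this synchronised perturbation of the curvetta alongside the varying divisor $F_t$: making it rigorous requires the same careful local bookkeeping of blow-ups and intersection numbers as in the proof of Proposition~\ref{prop:val_def2}, to guarantee both that $\gamma_t$ is genuinely a curvetta of $F_t$ for generic $t$ and that the family specialises correctly to $\gamma$ at $t = 0$.
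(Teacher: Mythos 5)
Your proposal takes a genuinely different route from the paper's proof, and it is worth contrasting the two. Your idea is to deform the free $(m{+}1)$-th center of $F'$ to obtain a one-parameter family $(F_t)_t$ with $F_0=F'$ and $Cont(E',F_t)=m$ for $t\neq 0$, then apply Theorem~\ref{theo:main2} to each generic pair $(E',F_t)$ to get $\overline{N}_{F_t}\subseteq\overline{N}_{E'}$, and finally specialise at $t=0$ using Zariski-closedness of $\overline{N}_{E'}$. The paper instead works entirely through cylindrical finiteness: Lemma~\ref{lem:cyl1} converts the infinite intersection $\bigcap_{E''\equiv_{\geq i}E}\overline{N}_{E''}$ into a finite intersection $\bigcap_{j\in J}\overline{N}_{E_j}$, then one application of Theorem~\ref{theo:Nash_top} to each pair $(E_j,F'')$ yields $\overline{N}_{F''}\subseteq\bigcap_{j\in J}\overline{N}_{E_j}$ for a suitable $F''$, from which $\overline{N}_{F''}\subseteq\overline{N}_{E''}$ for an $E''$ with $(E'',F'')\equiv(E',F')$, and a second application of Theorem~\ref{theo:Nash_top} transfers the inclusion to $(E',F')$. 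What the paper's route buys you is that no wedge ever needs to be constructed: you only invoke combinatorial invariance and Noetherianity at a finite truncation order. Your combinatorial preliminary (the $(m{+}1)$-th centers of both $E$ and $F$ must be free when $m'>m$) is correct and matches the paper's remark that the corollary is vacuous otherwise.

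The step you flag at the end is a genuine gap, not a formality. You need, for every $\gamma\in\dot{N}_{F'}$, a wedge $(\gamma_t)_t$ with $\gamma_0=\gamma$ and $\gamma_t\in\dot{N}_{F_t}$ for generic $t$. This is \emph{not} a direct rerun of Proposition~\ref{prop:val_def2}: that proposition deforms curves whose divisor data is read in a single fixed model $S$, whereas here the model (and the exceptional component being hit transversally) moves with $t$ along with $F_t$, so the bookkeeping is of a different nature. A cleaner way to plug the hole than producing one wedge per $\gamma$ is to observe that the total space $\mathcal{N}:=\{(\gamma,t): \gamma\in\dot{N}_{F_t}\}$ is an irreducible cylindrical family fibering over the parameter line with fibers of constant codimension, so the part over $t\neq 0$ is dense and its closure contains $\dot{N}_{F_0}=\dot{N}_{F'}$; this is exactly the fibration argument behind Lemma~\ref{lem:codim}(2). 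Until an argument of that kind is supplied, the proof is incomplete, whereas the paper's Lemma~\ref{lem:cyl1} route circumvents the question altogether.
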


	\begin{proof}
	    Let $i+1$ be the contact order between $E$ and $F$. Let $p_i$ be the 
	last point in common on the process of finding the minimal model for $E$ and $F$. 
	The corollary does not say anything unless the next points in both resolutions are free,     
	because only in this case there exist other divisors $E'$ and $F'$ 
	with the same combinatorics as $E$ and $F$ 
	respectively, but with strictly higher contact than $E$ and $F$.
	
	By Lemma \ref{lem:cyl1}, there is a finite set $J$ such that:  
	$$\bigcap_{j\in J} \overline{N}_{E_j}= \bigcap_{E''\equiv_{\geq i} E} \overline{N}_{E''}.$$
	By Theorem~\ref{theo:Nash_top}, if a divisor $F''$ is such that $(E_j,F'')$ is combinatorially 
	equivalent to $(E,F)$ for any $j\in J$, then we have the inclusion: 
	$$\overline{N}_{F''}\ \ \subset\ \ \bigcap_{j\in J} \overline{N}_{E_j}.$$ We deduce the inclusion:
	
	\begin{equation}\label{eq:top1}
	   \bigcup_{(E_j,F'')\equiv(E,F)\ for\ all\ j\in J} \overline{N}_{F''}\ \ \subset\ \ 
	   \bigcap_{E''\equiv_{\geq i} E} \overline{N}_{E''}.
	\end{equation}
Now, take $F''$ with $F''\equiv_{\geq i} F$ and $(E_j,F'')\equiv (E,F)$ for all $j\in J$. 
Choose $E''$ with $E''\equiv_{\geq i} E$ and such that $(E'',F'')\equiv (E',F')$. By the previous inclusion,  
	we have that $F''$ is upper Nash-adjacent to $E''$. Therefore, by Theorem~\ref{theo:Nash_top}, $F'$ 
	is upper Nash-adjacent to $E'$. 
\end{proof}

\begin{cor}\label{cor:cyl} 
     If $E$ and $F$ have contact order $i$, and $\overline{N}_F\contneq\overline{N}_E$, 
     then we have the following inclusions:   
     $$\overline{N}_F\contneq\bigcup_{F'\equiv_{\geq i} F} \overline{N}_{F'}\cont
     \bigcap_{E'\equiv_{\geq i} E} \overline{N}_{E'}\contneq\overline{N}_E$$
\end{cor}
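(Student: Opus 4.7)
The plan is to get the middle inclusion from Corollary \ref{cor:Nash_max_cont} and the two strict inclusions from Lemma \ref{lem:Nash_sets}, exploiting that combinatorially equivalent prime divisors produce distinct Nash sets of the same codimension.

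First, for the middle inclusion, fix arbitrary $F' \equiv_{\geq i} F$ and $E' \equiv_{\geq i} E$. Since $Cont(E,F)=i$, the divisors $E$ and $F$ share their first $i$ infinitely near blowup centers. The hypotheses $Cont(E,E') \geq i$ and $Cont(F,F') \geq i$ say that $E'$ and $F'$ share those same $i$ initial centers with $E$ and $F$ respectively; hence $E'$ and $F'$ share them too, so $Cont(E',F') \geq i = Cont(E,F)$. Since $E' \equiv E$ and $F' \equiv F$, Corollary \ref{cor:Nash_max_cont} applied to the pair $(E',F')$ gives $\overline{N}_{F'} \subseteq \overline{N}_{E'}$. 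As this holds for every admissible $E'$, we have $\overline{N}_{F'} \subseteq \bigcap_{E' \equiv_{\geq i} E} \overline{N}_{E'}$, and taking the union over $F'$ yields the desired middle inclusion.

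For the first strict inclusion, note that $F$ itself belongs to the index set (with $Cont(F,F)$ maximal, trivially $\geq i$), so $\overline{N}_F$ sits inside the union. To see strictness it is enough to exhibit an $F' \equiv_{\geq i} F$ with $F' \neq F$ whose Nash set is not contained in $\overline{N}_F$. Such an $F'$ exists: past the $i$-th common center with $E$, the resolution of $F$ involves at least one further blowup (the $(i+1)$-st center, which is free by definition of the contact order since it is not on the exceptional divisor created by the $i$-th blowup of $E$), and the free points in this tail can be varied to produce distinct divisors of the same combinatorial type. Now $\overline{N}_{F'}$ and $\overline{N}_F$ are, by Lemma \ref{lem:Nash_sets}(1),(4), irreducible closed cylindrical sets of the same finite codimension (the codimension depends only on the combinatorial type); by Lemma \ref{lem:Nash_sets}(5) they are distinct, so neither can strictly contain the other, in particular $\overline{N}_{F'} \nsubseteq \overline{N}_F$. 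Hence the first inclusion is strict. The third strict inclusion is proved symmetrically: $\overline{N}_E$ contains the intersection (take $E' = E$), and for any $E' \equiv_{\geq i} E$ with $E' \neq E$ obtained by varying free points past the $i$-th center of $E$, the same irreducibility/codimension argument gives $\overline{N}_E \nsubseteq \overline{N}_{E'}$, so there is an arc in $\overline{N}_E$ outside the intersection.

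The only delicate point, and what I would call the main obstacle, is establishing the existence of nontrivial moduli of divisors $F' \equiv_{\geq i} F$ and $E' \equiv_{\geq i} E$ distinct from $F$ and $E$ themselves; once this is granted, the incomparability of Nash sets of equal codimension finishes the job. This moduli existence is essentially automatic because the condition $Cont(E,F) = i$ forces the $(i+1)$-st blowup centers of $E$ and $F$ to be distinct (free) points of the exceptional divisor obtained after $i$ blowups, and such free points vary in positive-dimensional families without altering the combinatorial type, as recalled in Section \ref{sec:not}.
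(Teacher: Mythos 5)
Your argument for the middle inclusion is correct and is essentially the paper's argument: you observe that for any $F' \equiv_{\geq i} F$ and $E' \equiv_{\geq i} E$, all four divisors share the first $i$ infinitely near points, so $Cont(E',F')\geq i = Cont(E,F)$, and you then invoke Corollary~\ref{cor:Nash_max_cont} on the pair $(E',F')$. The paper instead first produces an $E''$ with $(E'',F')\equiv(E,F)$, uses Theorem~\ref{theo:Nash_top} to get $\overline{N}_{F'}\subsetneq\overline{N}_{E''}$, and then passes from $E''$ to $E'$ via Corollary~\ref{cor:Nash_max_cont}; the ingredients are the same and your one-step application is if anything a bit cleaner. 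This part is fine.

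The treatment of the two outer strict inclusions, however, contains a genuine gap. You assert that the $(i+1)$-st blowup center of $F$ ``is free by definition of the contact order since it is not on the exceptional divisor created by the $i$-th blowup of $E$.'' This is not a valid inference, and in fact the premise is false: the $(i+1)$-st center of $F$ is proximate to $p_i$, hence it \emph{does} lie on the component $E_{p_i}$ created by the $i$-th blowup. Whether that point is free or satellite is governed by whether it is a smooth or singular point of the total exceptional divisor of the model after $i$ blowups, and it has nothing to do with where the next center of $E$ lies; it can perfectly well be the satellite intersection of $E_{p_i}$ with an earlier component. When every center of $F$ past position $i$ is satellite, the set $\{F' : F'\equiv_{\geq i} F\}$ reduces to $\{F\}$, the union collapses to $\overline{N}_F$, and the first inclusion is an equality. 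A concrete instance: let $F$ be the last divisor of the minimal embedded resolution of the cusp $y^2=x^3$ (centers $O$, a free $p_1$, the satellite $p_2 = E_O\cap E_{p_1}$) and $E=E_{p_1}$; then $Cont(E,F)=2$, $\overline{N}_F\subsetneq\overline{N}_E$ by domination, yet $\{F'\equiv_{\geq 2}F\}=\{F\}$ and likewise $\{E'\equiv_{\geq 2}E\}=\{E\}$, so both outer inclusions are equalities. So the ``moduli exist'' step that you yourself flag as the delicate point is precisely where the argument breaks; the correct sufficient condition is the existence of a free point after position $i$ in the resolution tails of $E$ and $F$. Note also that the paper's own proof establishes only that $\overline{N}_{F'}\subsetneq\overline{N}_{E'}$ for every such pair, i.e.\ the middle inclusion and the pairwise strictness, and says nothing about the outer strictnesses, which as written should be taken as $\subseteq$ absent an additional hypothesis.
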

\begin{proof}
 We have to show that $\overline{N}_{F'}\contneq\overline{N}_{E'}$ for any 
    $F'\equiv_{\geq i} F$ and $E'\equiv_{\geq i} E$. 
 Consider such an $F'$. Choose $E''$ such that $E''\equiv_{\geq i} E$ and that $(E'',F')$ 
 is combinatorially equivalent to $(E,F)$. 
 By Theorem ~\ref{theo:Nash_top}, we have $\overline{N}_{F'}\contneq\overline{N}_{E''}$. 
 Since the contact order of $E'$ and $F'$ is at least $i$ when 
 $E'\equiv_{\geq i} E$, by the previous corollary we have 
   $\overline{N}_{F'}\contneq\overline{N}_{E'}$ 
 for any $E'\equiv_{\geq i} E$.
\end{proof}

\medskip
We state now two conjectures concerning maximal divisorial sets.
 
\begin{conj}\label{conj:cyl1} Let $E$ be a prime divisor. 
    Consider the infinitely near points $\{p_i\}_{i\in I}$ 
which are blown-up in order to obtain the minimal model of $E$. 
Let $p_{i_0}$ be any free point among them. 
We conjecture that the set $\bigcap_{E'\equiv_{i_0} E}\overline{N}_{E'}$ coincides 
with  $\overline{Cont^q(E_{p_{i_0}})}$ for certain $q$. In case the conjecture is true, 
it would be interesting to compute $q$. 
\end{conj}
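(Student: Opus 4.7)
The plan is to establish both inclusions of the conjectured equality, using the cylindricity of the Nash sets (Lemma~\ref{lem:Nash_sets} and Lemma~\ref{lem:cyl1}) together with the combinatorial characterization of Nash adjacency given by Theorem~\ref{theo:Nash_top}.

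For the inclusion $\overline{Cont^q(E_{p_{i_0}})} \subseteq \bigcap_{E' \equiv_{\geq i_0} E} \overline{N}_{E'}$, I would fix an arc $\gamma$ with $\nu_{E_{p_{i_0}}}(\gamma) \geq q$ and, for each $E'$ in the family, construct a one-parameter deformation $(\gamma_s)_s$ with $\gamma_0 = \gamma$ and $\gamma_{s \neq 0} \in N_{E'}$, proving $\gamma \in \overline{N}_{E'}$. The point is that high tangency of $\gamma$ to $E_{p_{i_0}}$ provides many free higher-order jet parameters, and these can be adjusted so that the strict transform of $\gamma_s$ passes successively through the free points $p_{i_0+1}^{(E')}, p_{i_0+2}^{(E')}, \ldots$ prescribed by $E'$. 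This is in the spirit of the deformation constructions of Section~\ref{sec:def}, with $\gamma$ playing a role analogous to $g_0$ in Theorem~\ref{theo:val_def}.

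For the reverse inclusion, I would first apply Lemma~\ref{lem:cyl1} to replace the infinite intersection by a finite subintersection $\bigcap_{j \in J} \overline{N}_{E_j}$ for a suitable finite $J$. The strategy is then to argue that the joint constraints $\gamma \in \overline{N}_{E_j}$ force the jets of $\gamma$ at $p_{i_0}$ to degenerate to order $q$. Concretely, one should select $\{E_j\}_{j \in J}$ so that the free-point choices after $p_{i_0}$ sample enough distinct directions on the successive exceptional divisors, and then perform a jet-space computation: being simultaneously compatible with these different choices leaves no option but for $\nu_{E_{p_{i_0}}}(\gamma) \geq q$. The combinatorial invariance coming from Theorem~\ref{theo:Nash_top} supports the expectation that the intersection is indeed cut out by a single valuative condition on $E_{p_{i_0}}$.

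The hard part will be the determination of the exact integer $q$ and the sharpness of both inclusions. A natural candidate is $q = \nu_{E_{p_{i_0}}}(h_E)$, the intersection multiplicity of a curvetta for $E$ with $E_{p_{i_0}}$, which depends only on the combinatorial type of the pair $(E, E_{p_{i_0}})$; its relevance is suggested by the exchange formula of Lemma~\ref{lem:exchange} and by the role played by approximate roots in Proposition~\ref{prop:prime}. Verifying sharpness will require a delicate analysis of how successive blow-ups after $p_{i_0}$ constrain the higher-order jets of an arc, and in particular a proof that the finite subfamily $\{E_j\}_{j\in J}$ captures exactly the right amount of information, without imposing spurious conditions at divisors unrelated to $E_{p_{i_0}}$. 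I expect this jet-level control to be the main technical obstacle.
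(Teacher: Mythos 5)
This statement is labeled a \emph{Conjecture} in the paper, and the authors do not prove it; the Remark immediately following records only that for $i_0=1$ and one Puiseux pair the answer is $M=\beta_1$ (via Ishii's Lemma~3.11), and that in general they could only bound the exponent above by $\beta_k-1$. So there is no proof in the paper to compare against, and your proposal should be judged on its own merits.

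Your sketch has two genuine gaps. For the inclusion $\overline{Cont^q(E_{p_{i_0}})}\subseteq\bigcap_{E'}\overline{N}_{E'}$, you propose to deform $\gamma$ so that its lifts pass through the free points prescribed by each $E'$; but producing a one-parameter family of curves or arcs with prescribed infinitely near points is exactly the kind of construction covered by Section~\ref{sec:def}, and what such constructions give you is the \emph{valuative} inequality $\nu_{E'}\leq\nu_{F}$, not membership of $\gamma$ in $\overline{N}_{E'}$. As the paper emphasizes (Ishii's example, and Section~\ref{sec:ex} with Proposition~\ref{prop:no_adj}), the valuative inequality does \emph{not} imply the Nash adjacency: the missing ingredient is the $\delta$-constancy of the deformation, equivalently a parametrization in family (Theorem~\ref{theo:Teissier_delta}). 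Your sketch never produces a $\delta$-constant family (equivalently a wedge), so it does not reach the Nash set. The reverse inclusion is stated only as a hope (``force the jets to degenerate''); the phrase ``leaves no option but for $\nu_{E_{p_{i_0}}}(\gamma)\geq q$'' is precisely what would need to be proved, and you acknowledge it as ``the main technical obstacle'' without addressing it.

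Your proposed value $q=\nu_{E_{p_{i_0}}}(h_E)$ is also contradicted by the one case that is settled. For $i_0=1$, $p_{i_0}=O$ and $E_{p_{i_0}}=E_0$, so your formula gives $q=\nu_{E_0}(h_E)=m_O(h_E)=\beta_0$. But in the one-Puiseux-pair case the paper (citing Ishii) states $M=\beta_1>\beta_0$. So the candidate $q$ is off in the only place one can test it, and this already rules out the simple exchange-formula heuristic you propose via Lemma~\ref{lem:exchange}.
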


\begin{rem}
   For the case $i_0=1$, the previous conjecture becomes:   
          $$\bigcap_{E'\equiv_{\geq 1} E}\overline{N}_{E'}= \overline{Cont^{M}(E_{0})}$$ 
for a certain $M$, where $E_0$ is the divisor that appears after 
blowing up the origin of $\CC^2$. By the combinatorial nature of Nash adjacency, 
$M$ must depend only on the Puiseux exponents of a curve $V(h_E)$ associated with $E$.
In the case of one Puiseux pair, Ishii's Lemma~3.11~\cite{I_max} implies 
the equality $M=\beta_1$. This comes from the fact that the divisorial valuation
associated to the last divisor of the minimal embedded resolution of a branch with  $1$ Puiseux 
pair is a toric valuation relative to conveniently chosen coordinates. 
But this equality is false in general. Until now, we only have been 
able to prove that $M$ is bounded from above by $\beta_k-1$.
\end{rem}

\begin{conj}\label{conj:cyl3} 
Assume $Cont(E, F)=i$  and that $p^E_{i+1}$ and $p^F_{i+1}$ are both free points. Consider:   
$$k_F:=max \left\{m:\bigcup_{F'\equiv_i F} \overline{N}_{F'}\contneq
       \overline{Cont^{m}E_{p_{i}}} \right\},$$
$$k_E:=min \left\{m: \overline{Cont^{m}E_{p_{i}}}\contneq
       \bigcap_{E'\equiv_{\geq i} E} \overline{N}_{E'}\right\}.$$ 
We conjecture that  $\overline{N}_F\contneq\overline{N}_E$ if and only if $k_F\geq k_E$. 
It would also be interesting to compute $k_E $ and $k_F$ 
in terms of combinatorial invariants of $E$ and $F$. 
\end{conj}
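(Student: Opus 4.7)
The plan is to split the biconditional into its two directions, of which the sufficient direction $k_F \geq k_E \Rightarrow \overline{N}_F \subsetneq \overline{N}_E$ is essentially formal. Since $\overline{Cont^m E_{p_i}}$ is monotonically decreasing in $m$, the hypothesis $k_F \geq k_E$ yields $\overline{Cont^{k_F} E_{p_i}} \subseteq \overline{Cont^{k_E} E_{p_i}}$, and unwinding the defining conditions of $k_F$ and $k_E$ one obtains the chain
\[
\overline{N}_F \;\subset\; \bigcup_{F'\equiv_{\geq i} F} \overline{N}_{F'} \;\subsetneq\; \overline{Cont^{k_F} E_{p_i}} \;\subseteq\; \overline{Cont^{k_E} E_{p_i}} \;\subsetneq\; \bigcap_{E'\equiv_{\geq i} E} \overline{N}_{E'} \;\subset\; \overline{N}_E,
\]
which supplies the Nash-adjacency.

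For the necessary direction, Corollary~\ref{cor:cyl} already provides $\bigcup_{F'} \overline{N}_{F'} \subset \bigcap_{E'} \overline{N}_{E'}$, so the real task is to interpolate a cylindrical set $\overline{Cont^m E_{p_i}}$ strictly between them. I would approach this by computing $k_F$ and $k_E$ explicitly. For $k_F$: since a generic arc in $\overline{N}_{F'}$ is parametrized by a curvetta of $F'$ and hence satisfies $\nu_{E_{p_i}}(\gamma)=\nu_{E_{p_i}}(h_{F'})$, and since by Lemma~\ref{lem:nu_ind} this value is constant over the combinatorial family $F' \equiv_{\geq i} F$, the minimum of $\nu_{E_{p_i}}$ on the union is attained and equals $\nu_{E_{p_i}}(h_F)$; combined with Lemma~\ref{lem:exchange} and formula~(\ref{eq:mult}) this gives
\[
k_F \;=\; \nu_{E_{p_i}}(h_F) \;=\; \sum_{j=0}^{i} m^F_j,
\]
where $(m^F_j)$ is the multiplicity sequence of a curve associated with $F$ at the common blown-up points $p_0,\ldots,p_i$. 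For $k_E$, my plan is to first establish Conjecture~\ref{conj:cyl1} in the case $i_0=i$, identifying $\bigcap_{E' \equiv_{\geq i} E} \overline{N}_{E'} = \overline{Cont^{q_E} E_{p_i}}$ for an integer $q_E$ depending on the combinatorial type of $E$ past $p_i$, and yielding $k_E = q_E + 1$. With $k_F$ and $k_E$ both expressed as explicit combinatorial invariants, the inequality $k_F \geq k_E$ becomes a closed combinatorial condition which I would verify to coincide with the combinatorial characterization of Nash-adjacency provided by Theorem~\ref{theo:main2}, via the valuative criterion of Proposition~\ref{prop:val}.

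The principal obstacle is Conjecture~\ref{conj:cyl1} itself. While Lemma~\ref{lem:cyl1} guarantees that $\bigcap_{E' \equiv_{\geq i} E} \overline{N}_{E'}$ is closed cylindrical and equals a finite intersection of Nash sets, there is no a priori reason for this finite intersection to coincide with a single maximal divisorial set $\overline{Cont^{q_E} E_{p_i}}$. As the authors note, already for $i_0=1$ the sharp value $M=\beta_1$ in the one-Puiseux-pair case relies on Ishii's Lemma~3.11 of~\cite{I_max}, which exploits the toric character of the last divisor of a one-Puiseux-pair branch; the general case would require an extension of such a toric-like description, plausibly via the theory of key polynomials (compare Corollary~\ref{cor:key}) applied to the pencil of valuations $\{q\cdot\nu_{E_{p_i}}\}_q$, together with a careful analysis of how the Puiseux-type structure of $E$ beyond $p_i$ constrains the successive approximations of $\nu_E$ by rescaled copies of $\nu_{E_{p_i}}$. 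Only once $q_E$ is pinned down does the combinatorial equivalence $k_F \geq k_E \Leftrightarrow \overline{N}_F \subsetneq \overline{N}_E$ become a direct arithmetic check matched against Theorem~\ref{theo:main2}.
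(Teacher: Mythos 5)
The statement you are trying to prove is labelled as a \emph{conjecture} in the paper (Conjecture~\ref{conj:cyl3}), and the authors provide no proof of it; they even say explicitly that they ``would like also to compute $k_E$ and $k_F$,'' signalling that they do not know these quantities, let alone a proof of the equivalence. Any assessment of your proposal therefore cannot be a comparison against the paper's argument but only an evaluation of whether your plan would actually close the problem.

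The sufficiency direction you sketch is correct and indeed formal: since $\overline{Cont^m E_{p_i}}$ is (weakly) decreasing in $m$, the set of $m$ for which $\bigcup_{F'} \overline{N}_{F'} \subsetneq \overline{Cont^m E_{p_i}}$ holds is a down-set $\{0,\dots,k_F\}$ and the set for which $\overline{Cont^m E_{p_i}} \subsetneq \bigcap_{E'} \overline{N}_{E'}$ holds is an up-set $\{k_E,k_E+1,\dots\}$, so $k_F\geq k_E$ lets you interpolate and chain the inclusions exactly as you write. Your computation $k_F = \nu_{E_{p_i}}(h_F) = \sum_{j\leq i} m_j^F$ is also essentially right, modulo checking strictness of the inclusion at the boundary value, which you gloss but which is a reasonable lemma to expect.

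The genuine gap is in the converse direction, and it is not only the dependence on Conjecture~\ref{conj:cyl1}, which you honestly flag and which is indeed open. The more serious problem is the final step, where you say you would ``verify [the inequality $k_F\geq k_E$] to coincide with the combinatorial characterization of Nash-adjacency provided by Theorem~\ref{theo:main2}.'' Theorem~\ref{theo:main2} provides no such characterization: it asserts only that Nash-adjacency is a combinatorial invariant of the pair $(E,F)$, not \emph{which} combinatorial types are adjacent. There is no known explicit combinatorial criterion for $\overline{N}_F\subsetneq\overline{N}_E$ elsewhere in the paper either --- producing one is precisely the content of the generalized Nash problem and, essentially, of this very conjecture. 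So even granting Conjecture~\ref{conj:cyl1} and an explicit $k_E$, the last step of your plan presupposes the answer it is supposed to derive; it is circular. To make progress you would instead need an independent argument (for instance, an analysis of the wedge constructed by Theorem~\ref{theo:wedge_exist} showing that it forces an inclusion $\overline{Cont^{k_E} E_{p_i}}\supseteq\bigcup_{F'}\overline{N}_{F'}$), not a comparison with a nonexistent known answer.
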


The previous conjectures are motivated by our feeling 
that solving the generalized Nash problem as stated in the 
introduction seems very close to describing the inclusions among the maximal divisorial 
sets $\overline{Cont^q(E)}$.

\begin{defn} \label{logdis}
    Given a prime divisor $E$,  its \emph{log-discrepancy} $a_E(\CC^2)$ is the 
    positive integer $$1 + \nu_E(dx \wedge dy).$$ Here $\nu_E(dx \wedge dy)$ 
    denotes the order of vanishing along $E$ of the pull-back of the holomorphic 
    $2$-form $dx \wedge dy$ to a model in which $E$ appears.
\end{defn}
     
   One has the following well-known rules of computation of the log-discrepancy, 
   whose proofs are direct: 

\begin{itemize}
		\item If $E_0$ is the divisor that appears after the blow up of the origin of $\CC^2$, 
		then $a_{E_0}(\CC^2)=2$. 
   \item If $E$ is \emph{satellite} and appears after blowing up $p\in E_1\cap E_2$, then 
         $a_E(\CC^2)=a_{E_1}(\CC^2)+a_{E_2}(\CC^2)$.

   \item If $E$ is \emph{free} right after $E_1$ then $a_E(\CC^2)=a_E(\CC^2)+1$.
\end{itemize}
   
As a special case of the result \cite[Theorem 2.1]{ELM} of Ein, Lazarsfeld and Musta\c{t}\u{a} 
(see also \cite[Theorem 6.2]{EM}), we have:

\begin{theo} \label{codimlog}
     The codimension $codim(\overline{N}_E)$ of $\overline{N}_E$ in the arc space 
     of $\CC^2$ is equal 
     to the log-discrepancy $a_E(\CC^2)$. 
\end{theo}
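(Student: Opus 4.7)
The plan is to deduce this as a direct specialization of the main codimension formula of Ein--Lazarsfeld--Musta\c{t}\u{a} in \cite{ELM} (see also \cite[Theorem 6.2]{EM}). The approach is to recognize $\overline{N}_E$ as the maximal divisorial set attached to $\nu_E$ in the arc space of $\CC^2$, and to compute its codimension by pulling the problem back to a model $Y$ where $E$ appears, where the analogous codimension is trivially $1$, and then pushing it forward via the change-of-variables formula that shifts codimensions by $\nu_E(K_{Y/\CC^2})$.

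Concretely, I would first fix a model $\pi : Y \to \CC^2$ on which $E$ is a component of the exceptional divisor, and consider the cylinder
\[
  C_E \; := \; \{ \widetilde{\gamma} \in Y_\infty \,:\, \widetilde{\gamma}(0) \in E \setminus \Sing(Exc_Y), \; \widetilde{\gamma} \trans E \}
\]
inside the (smooth) arc scheme $Y_\infty$. Since $Y$ is smooth, cylinders in $Y_\infty$ have well-defined codimensions, and $C_E$ is a codimension-one open subset of the cylinder of arcs meeting $E$. The bijection between arcs of $\CC^2$ and their liftings identifies $\pi_\infty(C_E)$ with $\dot N_E$, and Lemma~\ref{lem:Nash_sets} gives $\overline{N}_E = \overline{\pi_\infty(C_E)}$. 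Next I would invoke the codimension-shift formula of \cite{ELM}: on a cylinder where $\pi_\infty$ is generically injective (which is the case on $C_E$, as $\pi$ is an isomorphism over smooth points of $E\setminus\Sing(Exc_Y)$ after contracting the other components),
\[
   \codim_{(\CC^2)_\infty} \overline{\pi_\infty(C_E)} \;=\; \codim_{Y_\infty}(C_E) + \nu_E(K_{Y/\CC^2}) \;=\; 1 + \nu_E(K_{Y/\CC^2}).
\]
Since $K_{\CC^2}$ is trivialized by $dx\wedge dy$, one has $\nu_E(K_{Y/\CC^2}) = \nu_E(dx \wedge dy)$, and by Definition~\ref{logdis} the right-hand side equals $\lambda(E)$, which gives the desired equality.

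The main obstacle is precisely the change-of-variables estimate that I used as a black box. Proving it requires analyzing, for each large $m$, the map $\pi_m : Y_m \to (\CC^2)_m$ on $m$-jet spaces, and showing that the dimension of a generic fiber of $\pi_m$ restricted to the $m$-truncation of $C_E$ exceeds the generic fiber dimension of $\pi_m$ elsewhere by exactly $\nu_E(K_{Y/\CC^2})$; this is the Mather--Jacobian type identity underlying motivic integration. Since this is fully carried out in \cite{ELM} and \cite{EM}, the present proof would reduce to observing that their hypotheses (smoothness of $Y$ and $\CC^2$, properness and birationality of $\pi$, and $E$ being a prime divisor appearing in $Y$) are automatic in our setting, and that $\overline{N}_E$ is exactly the maximal divisorial set associated with $\nu_E$ to which their formula applies.
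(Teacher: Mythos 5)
Your proposal is correct and takes essentially the same route as the paper, which states the theorem as a direct citation of \cite[Theorem 2.1]{ELM} (see also \cite[Theorem 6.2]{EM}); you simply unpack the change-of-variables computation underlying that citation. The one slip worth flagging is the justification you give for injectivity of $\pi_\infty$ on $C_E$: the morphism $\pi$ is not an isomorphism over any point of $E$ (it contracts $E$ to $O$); injectivity instead follows from the valuative criterion of properness, which gives a bijection between arcs on $Y$ not contained in $Exc_Y$ and arcs on $\CC^2$ not constant at $O$, and arcs transverse to $E$ are never contained in $Exc_Y$.
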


Observe that our subsets $\overline{N}_E$ are always contained in the set of arcs centered at 
the origin, which is a set of codimension 2 in the whole arc space of $\CC^2$. 
We will use now the previous theorem in combination with Theorem \ref{theo:Nash_top} in order 
to improve the inequalities between log-discrepancies (see Corollary \ref{cor:codim} below).

As an immediate consequence of Corollary \ref{cor:cyl}, we get:

\begin{cor}\label{cor:codim0}
If $Cont(E, F)=i$, and $\overline{N}_F\contneq\overline{N}_E$, then: 
     $$codim \left(\bigcup_{F'\equiv_{{\geq i}} F} \overline{N}_{F'}\right)\geq 
       codim \left(\bigcap_{E'\equiv_{{\geq i}} E} \overline{N}_{E'}\right).$$
			
			\end{cor}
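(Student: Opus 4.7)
The plan is to derive the statement as an immediate consequence of Corollary \ref{cor:cyl}, which already gives the set-theoretic inclusion
\[
\bigcup_{F'\equiv_{\geq i} F} \overline{N}_{F'}\ \subset\ \bigcap_{E'\equiv_{\geq i} E} \overline{N}_{E'}
\]
of subsets of the arc space of $\CC^2$. The whole content of Corollary~\ref{cor:codim0} then reduces to the general principle that, for closed subsets of a space in which codimension is defined, the codimension is reversed by inclusion.

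The only point that requires comment is the well-definedness of codimension for both sides. The right-hand side is a closed cylindrical set by Lemma \ref{lem:cyl1}. For the left-hand side, I would first pass to its closure (which has the same codimension). Since the set of prime divisors of a fixed combinatorial type with contact at least $i$ with $F$ is parametrized by a finite-dimensional algebraic family (the moduli being precisely the positions of the free infinitely near points, as discussed in Section \ref{sec:not}), the Nash sets $\overline{N}_{F'}$ fit together into a constructible family of cylindrical subsets, and the closure of their union is a closed cylindrical subset of the arc space of the same codimension as the union itself.

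Once both sides are realized as closed cylindrical subsets, their codimension can be computed by descending to a finite-dimensional jet scheme at a sufficiently high truncation order, where codimension is the standard algebraic codimension of a closed subscheme and is monotonic under inclusion. The inclusion above descends to each such truncation, and one reads off the desired inequality.

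There is no substantive obstacle: the entire non-trivial content is already packaged inside Corollary \ref{cor:cyl}, which itself rests on Theorem \ref{theo:Nash_top}. The codimension-theoretic reformulation here is essentially formal, the only mild technical point being the justification, sketched above, that the union on the left is well-behaved enough (cylindrical after taking closure) for its codimension in the arc space to make sense.
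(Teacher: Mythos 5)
Your proposal matches the paper's argument exactly: the paper derives Corollary~\ref{cor:codim0} as an ``immediate consequence'' of Corollary~\ref{cor:cyl} by reversing codimensions under the inclusion $\bigcup_{F'\equiv_{\geq i} F} \overline{N}_{F'}\subset \bigcap_{E'\equiv_{\geq i} E} \overline{N}_{E'}$. Your additional remarks on why the two sides have well-defined codimension (the right side being cylindrical by Lemma~\ref{lem:cyl1}, the left side fibering over a finite-dimensional moduli of free-point positions) are not spelled out in the paper at this exact point but are consistent with, and anticipated by, the fibration argument used in Lemma~\ref{lem:codim}(2); so this is a correct and faithful rendering of the paper's proof with slightly more explicit bookkeeping.
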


\begin{lem}\label{lem:codim}  Let $D$ be a prime divisor that appears after 
			blowing up $n+1$ points $\{x_i\}_{i=0}^n$. Fix $i\in \{0, ..., n-1\}$. 
			Let $k$ be the number of free points 
			$x_j$ with $i\leq j\leq n$. Then: 
			\begin{enumerate}
 \item  if $k\geq 1$, then we have:  
			$$codim \left( \bigcap_{D'\equiv_{{\geq i}} D} \overline{N}_{D'} \right) \geq codim 
			      (\overline{N}_D)  +1.$$			
 \item we have:  $$codim \left(\bigcup_{D'\equiv_{{\geq i}} D} \overline{N}_{D'}\right)=
               codim (\overline{N}_D) -k,$$		
\end{enumerate}
\end{lem}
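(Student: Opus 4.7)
The plan rests on two observations that are insensitive to the choice of $D'$ in the combinatorial family $\mathcal{F}:=\{D' : D'\equiv_{\geq i} D\}$. First, by Theorem~\ref{codimlog} combined with the combinatorial nature of the log-discrepancy (Definition~\ref{logdis}), one has $\codim(\overline{N}_{D'})=\lambda(D')=\lambda(D)$ for every $D'\in \mathcal{F}$. Second, this family is naturally parametrized by an algebraic variety $T$ of dimension exactly $k$: the position of each free $x_j$ with $j\geq i$ ranges over an open subset of a $\PP^1$, while satellite points are combinatorially rigid.

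For part~(1), if $k\geq 1$ then I can choose some $D'\neq D$ in $\mathcal{F}$; by Lemma~\ref{lem:Nash_sets}(5), $\overline{N}_{D'}\neq\overline{N}_D$. Both are irreducible closed cylindrical sets of the same finite codimension $\lambda(D)$, so neither can be strictly contained in the other, and $\overline{N}_D\cap\overline{N}_{D'}$ is therefore a proper closed subset of the irreducible $\overline{N}_D$. Working at a truncation level $m$ where both sets are cylinders, the image of this intersection in $\calX_m$ is properly contained in the irreducible variety $\pi_m(\overline{N}_D)$, and so has codimension at least $\lambda(D)+1$. Throwing further $\overline{N}_{D''}$ into the intersection only increases the codimension, giving~(1).

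For part~(2), I would fix a truncation level $m$ at which every $\overline{N}_{D'}$ with $D'\in\mathcal{F}$ is a cylinder, and form the incidence variety
\[ I_m := \{(\alpha,t)\in \calX_m\times T : \alpha \in \pi_m(\overline{N}_{D'(t)})\}. \]
Its projection to $T$ has fibers of codimension $\lambda(D)$ in $\calX_m$, so $\dim I_m = \dim\calX_m+k-\lambda(D)$. The other projection $\rho:I_m\to \calX_m$ has image $\pi_m\bigl(\bigcup_{D'\in\mathcal{F}}\overline{N}_{D'}\bigr)$, and the core of the argument is that $\rho$ is generically finite: an arc whose lift passes through a specific free position $x'_j$ on some exceptional component cannot simultaneously pass through a distinct free position on the same component, since distinct points of $\PP^1$ impose incompatible contact conditions on the lift. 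Hence, generically, the whole tuple of free positions, and so the parameter $t\in T$, is reconstructible from $\alpha$. This yields $\dim(\mathrm{im}\,\rho)=\dim I_m$, which gives the desired codimension $\lambda(D)-k$.

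The main obstacle is making the generic-finiteness step rigorous: I need to identify a truncation level $m$ at which the position of each free $x'_j$ is genuinely readable from the $m$-jet, and exhibit a dense open subset of the image on which the recovery of $t$ from $\alpha$ succeeds, ruling out tangential or otherwise exotic lifts compatible with distinct choices of $t$. Once this technical point is settled, both codimension identities follow directly from the dimension formula for $I_m$.
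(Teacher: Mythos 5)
Your proof is correct and follows essentially the same route as the paper: part (1) by the irreducibility/cylindricality of the $\overline{N}_{D'}$, and part (2) by exhibiting $\bigcup_{D'\equiv_{\geq i} D}\overline{N}_{D'}$ as (the closure of) a fibration over the $k$-dimensional parameter space of free-point positions. The obstacle you flag about generic finiteness is not a real gap: the dense open pieces $\dot{N}_{D'}$ of Lemma~\ref{lem:Nash_sets}(2)--(3) are cylinders at a truncation level depending only on the combinatorial type, hence uniform over the family, and they are pairwise disjoint for distinct $D'$ since an arc transverse to $D'$ lifts through exactly the infinitely near points of $D'$; so $t$ is well-defined on $\bigcup_{D'}\dot N_{D'}$, which is dense in the image.
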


		\begin{proof}
			Part (1) is obvious since $k\geq 1$ implies that there are at least two different 
			subsets $\overline{N}_{D'}$ intersecting. 
			
			For part (2), notice that 
			$\bigcup_{D'\equiv_{{\geq i}} D}(\overline{N}_{D'})$ 
			fibers over the variety parametrising the 
			possible positions of the 
			free points $x_j$ with $i\leq j\leq n$. This variety has dimension $k$ and 
			its fibre over a point is $\overline{N}_{D'}$ for a certain $D'$ satisfying 
			$D'\equiv_{{\geq i}} D$.
		\end{proof}
			
\begin{cor}\label{cor:codim} 
Assume that $\overline{N}_F\subsetneq \overline{N}_E$. Let $i=Cont(E,F)$. 
Let $k$ be the number of free points after $x_{i}$ (and including it) that we blow up to 
obtain the minimal model of $F$. Let $h$ be equal to $1$ if there is at least one free point 
after $x_i$ 
in the blowing up process leading to the minimal model of $E$ and  be equal to to $0$ otherwise. 
Then the following inequality holds:    
    \begin{equation}\label{eq:mej_disc}
         a_E(\CC^2)<a_F(\CC^2)-k-h.
    \end{equation}
\end{cor}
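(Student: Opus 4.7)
The plan is to interpolate between $\overline{N}_F$ and $\overline{N}_E$ by sandwiching them between a union and an intersection of combinatorial translates, whose codimensions are controlled by Lemma~\ref{lem:codim} and compared via Corollary~\ref{cor:codim0}. Concretely, I would work with the chain
$$\overline{N}_F \;\cont\; \bigcup_{F'\equiv_{\geq i}F}\overline{N}_{F'} \;\cont\; \bigcap_{E'\equiv_{\geq i}E}\overline{N}_{E'} \;\cont\; \overline{N}_E,$$
in which the middle inclusion is the content of Corollary~\ref{cor:codim0} and the outer ones follow by definition. Replacing codimensions by log-discrepancies via Theorem~\ref{codimlog}, Lemma~\ref{lem:codim}(2) applied to $D=F$ computes $\codim\bigl(\bigcup_{F'\equiv_{\geq i}F}\overline{N}_{F'}\bigr)=\lambda(F)-k$, and Lemma~\ref{lem:codim}(1) applied to $D=E$ (together with the trivial case $h=0$, where only $E$ itself lies in its combinatorial class) produces the lower bound $\codim\bigl(\bigcap_{E'\equiv_{\geq i}E}\overline{N}_{E'}\bigr)\geq\lambda(E)+h$. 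Taking codimensions through the middle inclusion then yields the weak bound $\lambda(E)\leq\lambda(F)-k-h$.

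To upgrade to strict inequality I would exploit the strict hypothesis $\overline{N}_F\contneq\overline{N}_E$. By Pl\'enat's valuative criterion one obtains $\nu_E\leq\nu_F$, and antisymmetry of $\leq_\nu$ (Lemma~\ref{lem:antisym}) combined with Lemma~\ref{lem:val_top} forces $E$ and $F$ to have distinct combinatorial types (otherwise $(E,F)\equiv(F,E)$ would swap the inequality, giving also $\nu_F\leq\nu_E$ and hence $E=F$). Consequently no $F'$ in the combinatorial class of $F$ coincides with $E$, and by Lemma~\ref{lem:Nash_sets}(5) all the Nash sets $\overline{N}_{F'}$ are distinct from $\overline{N}_E$. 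Irreducibility of $\overline{N}_E$ (Lemma~\ref{lem:Nash_sets}(1)) then forces the strict inclusion $\bigcup_{F'}\overline{N}_{F'}\contneq\overline{N}_E$, which provides the extra unit of codimension drop from $\lambda(E)$ needed to promote the bound to strict in the case $h=0$.

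The case $h=1$ is the main obstacle, since both Lemma~\ref{lem:codim}(1) and Corollary~\ref{cor:codim0} a priori contribute only non-strict inequalities. My expected route is via Corollary~\ref{cor:cyl}, which records the outer inclusions of the sandwich as strict precisely when $k$ (respectively $h$) is nonzero; reading off codimension drops along the entire strict chain then produces the bound $\lambda(F)-\lambda(E)\geq k+h+1$, which is exactly the strict inequality sought. The delicate point is verifying that those strict outer inclusions really hold in this case, which ultimately reduces to the same combinatorial-distinctness argument combined with irreducibility of both $\overline{N}_E$ and $\overline{N}_F$; once established, the rest is routine additivity of codimension along the chain.
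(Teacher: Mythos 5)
The outer structure of your argument matches the paper's one-line proof exactly: you pass through the chain
\[
\overline{N}_F \subset \bigcup_{F'\equiv_{\geq i}F}\overline{N}_{F'} \subset \bigcap_{E'\equiv_{\geq i}E}\overline{N}_{E'} \subset \overline{N}_E,
\]
apply Lemma~\ref{lem:codim}(2) to $D=F$, Lemma~\ref{lem:codim}(1) (or the trivial $h=0$ case) to $D=E$, and close the middle inequality with Corollary~\ref{cor:codim0}. That correctly yields the non-strict bound $\lambda(E)\leq\lambda(F)-k-h$, which is all that a ``direct application of the previous lemma'' (the paper's entire proof) gives.

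The genuine gap is in your promotion to a strict inequality. You argue that each $\overline{N}_{F'}$ is a proper subset of $\overline{N}_E$ (true, by Theorem~\ref{theo:Nash_top} and Lemma~\ref{lem:Nash_sets}(5)), and that irreducibility of $\overline{N}_E$ then forces the union $\bigcup_{F'}\overline{N}_{F'}$ to have strictly larger codimension than $\overline{N}_E$. But this inference is false: a union of infinitely many proper irreducible subsets of an irreducible variety can fill it up to codimension zero. This is not a peripheral technicality --- it is exactly the mechanism quantified by Lemma~\ref{lem:codim}(2), which says that the union, although built from sets of codimension $\lambda(F)$, has codimension only $\lambda(F)-k$ because the free points sweep out a $k$-dimensional parameter space. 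Concretely, take $E=E_0$ (one blow-up) and $F$ the free divisor obtained by one further free blow-up, so that $\overline{N}_F\subsetneq\overline{N}_E$, $i=1$, $k=1$, $h=0$; then $\bigcup_{F'\equiv_{\geq 1}F}\overline{N}_{F'}$ has codimension $\lambda(F)-k=3-1=2=\lambda(E)$, and is nevertheless a proper subset of $\overline{N}_E$. So the ``extra unit of codimension drop'' you invoke simply does not materialize from the strict inclusion alone. The $h=1$ branch of your argument inherits the same flaw, since it rests on the same dimension-counting step.

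In short: you have a correct proof of $\lambda(E)\leq\lambda(F)-k-h$, which is in fact what the chain of lemmas delivers, but the passage from $\leq$ to $<$ as you wrote it is unsound. Any valid proof of the strict inequality would need a genuinely new input beyond ``proper closed subset of an irreducible set,'' and you should check very carefully against examples like the one above before asserting it.
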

\begin{proof}
 This is a direct application of the previous lemma.
\end{proof}


\subsection{An Example}\label{sec:ex}  $\: $ 
\medskip

Pl\'enat showed in \cite[2.2]{Cam} that a Nash adjacency 
$\overline{N}_F\subseteq\overline{N}_E$ implies the valuative inequality $\nu_E\leq \nu_F$.

In \cite{I_max}, Ishii gave an example of a pair $(E, F)$ where the valuative inequality $\nu_E\leq \nu_F$  
holds, but $\overline{N}_F\ncontneq \overline{N}_E$. 
The example was for $E$ and $F$ as in Figure \ref{fig:ishii}. 
 Since  $a_E(\CC^2)=9$ and $a_F(\CC^2)=8$, we can use instead 
 Corollary \ref{cor:codim} to see directly that 
 the Nash-adjacency $\overline{N}_F\subseteq\overline{N}_E$ does not hold. 

Observe that in Ishii's example, the divisor $E$ is not a satellite divisor (that is, it is 
not the final divisor of the minimal embedded resolution of a plane branch). 
One could still ask oneself 
whether the valuative inequality implies the Nash-adjacency for satellite divisors. 
In the following example we will see that this is not true.

\begin{figure}%
\includegraphics[width=30mm]{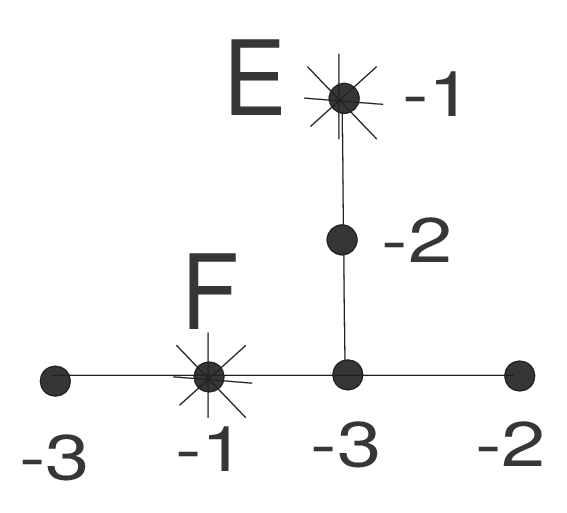}
\caption{Dual graph of the exceptional set of the minimal model of $E+F$ in Ishii's example.} 
\label{fig:ishii}%
\end{figure}

Namely, we exhibit two satellite divisors $(E,F)$ with contact order 1 
for which we have $\nu_E\leq \nu_F$, $a_E(\CC^2)<a_F(\CC^2)-k-h$ 
as in Corollary \ref{cor:codim},  but $E\nleq_N F$. That is, 
$\overline{N}_F\ncontneq \overline{N}_E$. 

The example is the following. Take the curve $V(h_E)$ parametrized by $(t^9,t^6+t^8)$. 
The curve has multiplicity sequence $(6,3,3,2,1,1)$ and 
characteristic exponents  $(6,9,11)$. Consider $E$ with the combinatorial type of the 
final divisor of the minimal embedded resolution of $V(h_E)$. Take $F$ a divisor whose 
associated curves have multiplicity sequence $(10,1,1,...,1)$. We consider divisors $E$ 
and $F$ of these combinatorial types and contact order $1$ as in Figure \ref{fig:ex}.

\begin{figure}%
\includegraphics[width=90mm]{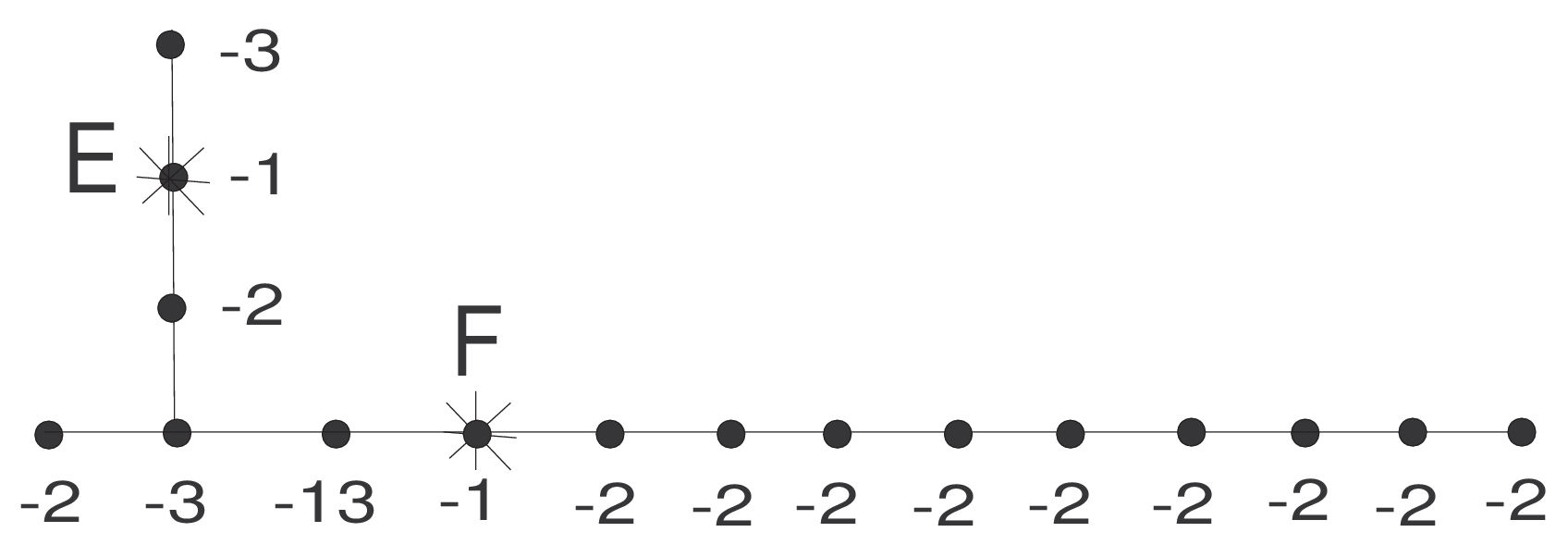}
\caption{Dual graph of the exceptional set of the minimal model of $E+F$ in our example.} 
\label{fig:ex}%
\end{figure}

It is easy to check using Proposition \ref{prop:val} (4) that $\nu_E\leq \nu_F$. 
Morever we have that $a_F(\CC^2)=21$ and $a_E(\CC^2)=17$.

We will obtain $\overline{N}_F\ncontneq \overline{N}_E$ as a consequence of Proposition 
\ref{prop:no_adj}. To prove it we will need Lemma \ref{lem:puis} below, based on the following observation: 

\begin{fact} 
Given a satellite divisor $E$, all the curves $V(h)$ associated with $E$ 
have the same Newton-Puiseux expansion with exactly the same coefficients up to the 
coefficient corresponding to the 
greatest exponent which varies as the strict transform moves along $E$. 

\end{fact}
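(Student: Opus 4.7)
The plan is to analyze the Newton-Puiseux expansions of curvettas of $E$ by tracing them through the sequence of blow-ups $p_1=O,p_2,\ldots,p_n=p_E$ that produces the minimal model $S_E$ of $E$. I would first choose local coordinates $(x,y)$ at $O$ with the $y$-axis transverse to every curvetta of $E$; this can be done uniformly since all curvettas share the same tangent cone at $O$, being branches of the same topological type (in particular $\beta_0>1$, so the tangent direction is $y=0$). Each curvetta then admits a Puiseux expansion $y=\sum_k a_k x^{\alpha_k}$ with increasing rational exponents $\alpha_k$, and since $E$ determines the topological type of a curvetta, the characteristic sequence $(\beta_0,\ldots,\beta_g)$ is the same for every curvetta of $E$.

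Because $E$ itself is fixed (not merely its combinatorial type), each $p_i$ is a specific point in its model, whether satellite or free. Every curvetta must pass through all of them, and at each step this imposes a definite value on one of the Puiseux coefficients. A step-by-step analysis shows that the conditions of passage through $p_1,\ldots,p_{n-1}$ pin down exactly the coefficients $a_k$ for exponents $\alpha_k<\alpha^*$, where $\alpha^*=\beta_g/\beta_0$ is the last characteristic exponent, and the pinned-down values depend only on $E$. The last blow-up, at the satellite point $p_E$, creates $E$: the tangent direction of the strict transform at $p_E$ determines the point where it meets $E$ at a smooth point of the exceptional divisor in $S_E$, and a local computation in the chart of the blow-up of $p_E$ shows that this direction is encoded precisely by the coefficient of $x^{\alpha^*}$ in the original expansion. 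Consequently, as the curvetta moves along $E$, only the coefficient at $\alpha^*$ varies, while all coefficients at smaller exponents remain constant.

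The main obstacle is the technical bookkeeping of how each blow-up transforms the Puiseux expansion and which coefficient is fixed at each stage, together with the explicit identification of $\alpha^*$. A cleaner organization uses the approximate roots $f_0,\ldots,f_{g-1}$ of a chosen curvetta $V(f)$ recalled in Subsection~\ref{sec:speprime}: by Proposition~\ref{prop:topint}, each $f_k$ defines a curvetta of the intermediate satellite divisor $E^{(k)}\subset S_E$, and the Puiseux expansion of $f_k$ captures exactly the part of the expansion of $f$ determined by $E^{(0)},\ldots,E^{(k)}$. An induction on $g$, with the base case $g=1$ handled by a direct computation as in the cusp example $y^2=x^{\beta_1}$, then reduces the statement to the defining property of the top approximate root, which leaves the coefficient at $\alpha^*=\beta_g/\beta_0$ as the unique free parameter governing the position on $E$.
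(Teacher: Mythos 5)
The paper states this Fact without any proof --- it is presented as an observation that feeds into the proof of Lemma~\ref{lem:puis} --- so there is no ``paper's approach'' to compare against. Your plan is the natural one: trace the Newton--Puiseux expansion step by step through the sequence of blow-ups $p_1,\ldots,p_n$ leading to $S_E$, observe that membership of the strict transform in the next (fixed) center forces the value of the next Puiseux coefficient, and identify the point of $E$ hit by the strict transform with the coefficient of $x^{\beta_g/\beta_0}$. That is the correct mechanism, and the overall structure of your argument is sound.

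Two small points deserve correction. First, the justification ``all curvettas share the same tangent cone at $O$, being branches of the same topological type'' is not right as stated: branches of a given embedded topological type can have any tangent direction. What forces the common tangent is precisely that all curvettas of the \emph{fixed} divisor $E$ lift through the \emph{same} point $p_1$ of $E_O$; this is an instance of the very phenomenon your step-by-step argument is meant to establish, and should be invoked as such rather than attributed to the topological type alone. Second, in your ``cleaner organization'' via approximate roots, the divisors $E^{(k)}$ of Proposition~\ref{prop:topint} are the valence-one (end) components of the dual graph of $S_E$, not intermediate satellite divisors; in particular $E^{(-1)}$ and $E^{(0)}$ are free. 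The relevant fact is that $V(f_k)$ is a curvetta for $E^{(k)}$ and $I_O(V(f),V(f_k))=\overline{\beta}_{k+1}$, which does organize the contact information Puiseux pair by Puiseux pair, but the phrasing should be adjusted accordingly. With these two fixes, the proposal is a correct proof sketch, and the remaining work is exactly the bookkeeping you already flag.
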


\begin{lem}\label{lem:puis}
     Let $\alpha(t,s)=(\alpha_1(s,t),\alpha_2(s,t))$ be a wedge realising 
     a Nash-adjacency $\overline{N}_F\contneq\overline{N}_E$ whose generic arc lifts transversely by $E$.     
     Then there exists $t(u,s)\in \CC((s))[[u]]$ such that: 
       \begin{equation}\label{eq:puis}
             \alpha(t(u,s),s)=
             (u^{\beta_0},\sum_{i=\beta_1}^{\beta_k-1}c_iu^i+\lambda(s)u^{\beta_k}+
            \sum_{i>\beta_k}c_i(s)u^i)
       \end{equation}
     where $c_i\in\CC$ for $i=\beta_1,\beta_1+1,...,\beta_{k-1}$ and $\lambda(s)$ and $c_i(s)$ for $i>\beta_k$ 
     are meromorphic in $s$  and such that (\ref{eq:puis}) is the Newton-Puiseux 
expansion for any generic arc $\alpha_{s \neq 0}$. 
\end{lem}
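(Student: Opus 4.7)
The strategy is to put each generic arc $\alpha_{s \neq 0}$ into Newton-Puiseux form uniformly in $s$, and then to apply the Fact stated just before the lemma to conclude that only the coefficient at $u^{\beta_k}$ and beyond can depend on $s$.

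First I would choose coordinates $(x,y)$ on $(\CC^2,O)$ so that the $y$-axis is transverse to the common tangent direction at $O$ of all curvettas of $E$; this is possible because all curvettas of $E$ share the same initial chain of infinitely near points, hence the same tangent line at $O$. In such coordinates, the hypothesis that $\alpha_{s\neq 0}$ lifts transversely by $E$ implies that the image of $\alpha_s$ is a smooth curvetta of $E$ of $x$-multiplicity exactly $\beta_0$, parametrized primitively. Hence
\[\alpha_1(s,t) \;=\; a(s)\, t^{\beta_0}\bigl(1+O(t)\bigr),\qquad a(s)\in\CC\{s\}\setminus\{0\}.\]

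Second, I would solve $\alpha_1(s,t)=u^{\beta_0}$ for $t$ as a series in $u$. Extracting the $\beta_0$-th root of the unit $\alpha_1(s,t)/(a(s)t^{\beta_0})$ in $\CC((s))[[t]]$ and then inverting yields $t(u,s)$ a priori in the Puiseux ring $\CC((s^{1/\beta_0}))[[u]]$, with leading term $a(s)^{-1/\beta_0}u$; a finite cover reparametrization $s\mapsto s^{\beta_0}$ of the wedge's parameter space absorbs the fractional power, placing the series in $\CC((s))[[u]]$ as required. Substituting into $\alpha_2$ produces an expansion
\[\alpha_2\bigl(s,t(u,s)\bigr) \;=\; \sum_{i\geq \beta_1} d_i(s)\,u^i,\qquad d_i(s)\in\CC((s)),\]
which starts at $i=\beta_1$ because each $\alpha_s$ parametrizes a curvetta of $E$, whose first characteristic exponent is $\beta_1/\beta_0$.

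Finally, I would invoke the Fact: since every $\alpha_{s\neq 0}$ parametrizes a curvetta of the satellite divisor $E$ in the fixed coordinates, all these Newton-Puiseux expansions must agree in their coefficients through exponent $\beta_k-1$, and the coefficient at $u^{\beta_k}$ is precisely the modulus that varies as the strict transform moves along $E$. Therefore $d_i(s)$ is a constant $c_i\in\CC$ for $\beta_1\leq i<\beta_k$; setting $\lambda(s):=d_{\beta_k}(s)$ and $c_i(s):=d_i(s)$ for $i>\beta_k$, which remain meromorphic in $s$, gives the claimed expression. The main obstacle is the bookkeeping in the second step: one must verify that the finite reparametrization $s\mapsto s^{\beta_0}$ places $t(u,s)$ genuinely inside $\CC((s))[[u]]$ and preserves both the curvetta structure of the generic members and the meromorphicity of the resulting coefficients.
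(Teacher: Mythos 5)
Your proof is correct and follows essentially the same route as the paper's: reduce to a coordinate system in which the common tangent of the generic arcs is fixed, extract a $\beta_0$-th root of $\alpha_1$ after a base change $s\mapsto s^{\beta_0}$, invert to obtain $t(u,s)\in\CC((s))[[u]]$, and then invoke the constancy of the low-order Newton--Puiseux coefficients of curvettas of a fixed satellite divisor (the ``Fact''). The only presentational difference is the order of operations (you first work in $\CC((s^{1/\beta_0}))[[u]]$ and then reparametrize, while the paper reparametrizes first and then extracts the root), which does not affect the substance of the argument.
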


\begin{proof} 
We just perform in family the usual procedure to get the Newton-Puiseux expansion. 
Note that the tangent line of the generic arcs is constant unless the arcs are smooth and 
$E=E_{O}$.  This case is easy so we assume that the tangent of the generic arc is constantly $y=0$.
 
Let $\alpha_1(s,t)$ be equal to $\sum_j a_j(s)t^j$ for some holomorphic germs $a_j(s)$. If $s^{\beta_0}$ 
does not divide $a_{\beta_0}(s)$, we do the substitution $s=s^{\beta_0}$. Then, we can 
write $\alpha_1$ as follows:
      $$\alpha_1(s,t)=s^{\beta_0R}v(s)t^{\beta_0}
           \left(1+\frac{a_{\beta_0+1}(s)}{a_{\beta_0}(s)}t^1+ \cdots \right)$$
with $R>0$,  $v(0)\neq 0$.
Now we can take the $\beta_0$-root of $\alpha_1(t,s)$ for $s\neq 0$ which will be a function: 
$$U(t,s)=ts^R\sqrt[\beta_0]{v(s)}\left(1+\sum_{i>1} b_i(s)t^i  \right)$$ 
for some $b_i(s)$ possibly meromorphic in $s$. Observe that since $v(s)$ is holomorphic, 
so is  $\sqrt[\beta_0]{v(s)}$.

Then, we have that: 
    \begin{equation}\label{eq:U}
           U(t,s)^{\beta_0}=\alpha_1(t,s).
    \end{equation} 
Consider the inverse function $T(u,s)$ of $U(t,s)$ in $\CC((s))[[t]]$, that is, the unique element 
in $\CC((s))[[t]]$ such that $U(T(u,s),s)=u$. 
Substituting $t=T(u,s)$ in $\ref{eq:U}$, we have that $\alpha_1(T(u,s),s)=u^{\beta_0}$ 
for all $s\neq 0$. Then, $\alpha(T(u,s),s)$ is in the Newton-Puiseux form for $s\neq 0$, 
so it is constant in $s$ up to order $\beta_k-1$ by the previous fact.
\end{proof}

Making the change of variables $u=u(t)$ (where $u(t)$ is the inverse function of $t(u)$) 
we get the equivalent formulation:
\begin{cor}\label{cor:puis}
       Let $E$ be any prime divisor and let $(u^{\beta_0}, \sum_{i\geq\beta_1}^{\beta_k}c_iu^i+
         \lambda u^{\beta_k})$, with $c_i$, $\lambda\in \CC$, be the Newton-Puiseux 
         expansion of an associated curve $V(h_E)$. Any wedge $\alpha$ with generic arc in 
         $\dot{N}_E$  is obtained 
         from an expression of the form:  
         $$\left(u^{\beta_0}, \sum_{i\geq\beta_1}^{\beta_k-1}c_iu^i+
              \lambda(s)u^{\beta_k}+\sum c_i(s)u^i \right)$$ 
         for some $\lambda(s)$ and $c_i(s)$ for $i>\beta_k$ meromorphic in $s$, by performing 
         a meromorphic change of coordinates $u=u(t)$ with:  
         $$u(t)=s^Rtw(s) \left(1+\sum_{i\geq 1} b_i(s)t^i \right)$$ 
         for some $R>0$, some  holomorphic function $w(s)$ and some meromorphic 
         functions $b_i(s)$. 
\end{cor}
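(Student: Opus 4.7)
The plan is to read off Corollary \ref{cor:puis} as a reformulation of Lemma \ref{lem:puis} by inverting the change of variable constructed in its proof. First I would observe that the proof of Lemma \ref{lem:puis} uses only the hypothesis that the generic arc $\alpha_s$ lifts transversely by $E$, not the full Nash-adjacency assumption; hence the construction in that proof applies verbatim to any wedge $\alpha$ with generic arc in $\dot{N}_E$. In particular, writing $\alpha_1(s,t)=\sum_j a_j(s)t^j$, the argument yields
$$\alpha_1(s,t)=s^{\beta_0 R}v(s)\,t^{\beta_0}\Bigl(1+\tfrac{a_{\beta_0+1}(s)}{a_{\beta_0}(s)}t+\cdots\Bigr)$$
with $R\geq 0$ and $v(s)$ holomorphic with $v(0)\neq 0$, and produces a $\beta_0$-th root
$$U(t,s)=t\,s^R\sqrt[\beta_0]{v(s)}\Bigl(1+\sum_{i\geq 1}b_i(s)t^i\Bigr)$$
where the $b_i(s)$ are meromorphic in $s$. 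A formal inverse $T(u,s)\in\CC((s))[[u]]$ of $U(t,s)$ exists because the coefficient of $t$ in $U(t,s)$ is $s^R\sqrt[\beta_0]{v(s)}$, which is invertible in $\CC((s))$. By Lemma \ref{lem:puis}, $\alpha(T(u,s),s)$ has the Newton-Puiseux shape
$$\Bigl(u^{\beta_0},\;\sum_{i=\beta_1}^{\beta_k-1}c_i u^i+\lambda(s)u^{\beta_k}+\sum_{i>\beta_k}c_i(s)u^i\Bigr),$$
with constant coefficients $c_i$ for $i<\beta_k$ by the Fact stated before that lemma.

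Next I would define $u(t):=U(t,s)$. Because $T(u,s)$ is the formal inverse of $U(t,s)$ in $\CC((s))[[u]]$, substituting $u=u(t)$ into the Newton-Puiseux expression above recovers $\alpha(t,s)$ itself. This exhibits $\alpha$ as being obtained from the stated Newton-Puiseux expression by the prescribed change of coordinates, with $w(s):=\sqrt[\beta_0]{v(s)}$. Since $v(s)$ is holomorphic with $v(0)\neq 0$, a single-valued branch of its $\beta_0$-th root is holomorphic in a neighbourhood of $0$, so $w(s)$ is indeed holomorphic, as required.

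The only real subtlety is bookkeeping of regularity: one must check that $w(s)$ is holomorphic rather than merely meromorphic (handled by the nonvanishing of $v(s)$ at the origin), that $T(u,s)$ is a well-defined element of $\CC((s))[[u]]$ (handled by invertibility of the linear coefficient of $U$), and that the constant initial coefficients $c_{\beta_1},\ldots,c_{\beta_k-1}$ really do not depend on $s$ (handled by the Fact that any two curvettas associated with a satellite divisor share their Newton-Puiseux expansion up to the last characteristic exponent). With these three points verified, the corollary is a direct translation of Lemma \ref{lem:puis}.
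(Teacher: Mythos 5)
Your proof is correct and takes essentially the same approach as the paper, which states the corollary as a direct reformulation of Lemma~\ref{lem:puis} obtained by inverting the change of variable $t=t(u,s)$ into $u=u(t,s)=U(t,s)$; your observation that the lemma's proof uses only the transverse-lifting hypothesis on the generic arc, plus your three regularity checks (holomorphy of $w$, invertibility of $U$, constancy of the low-order coefficients via the Fact), is precisely the bookkeeping the paper leaves implicit. One small point worth flagging: you correctly obtain only $R\geq 0$, whereas the statement asserts $R>0$; strict positivity fails for wedges whose special arc $\alpha_0$ also lies in $\dot{N}_E$, so the stated form implicitly excludes that case (as indeed holds in the intended application to Proposition~\ref{prop:no_adj}).
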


As a consequence:

\begin{prop}\label{prop:no_adj} 
  Let $E$ be a prime divisor such that the characteristic sequence of its associated curves is 
  $(\beta_0, \beta_1, ..., \beta_k)$ (see Subsection \ref{sec:speprime}). 
  Let $F$ be a prime divisor with contact order $1$ with $E$.
  If $m_O(h_F)=\beta_1+1$ for an associated curve $V(h_F)$, 
  then $\overline{N}_F\nsubseteq \overline{N}_E$. 
\end{prop}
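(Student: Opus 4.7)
The plan is to argue by contradiction. Suppose $\overline{N}_F\subseteq\overline{N}_E$; by Theorem~\ref{theo:wedge_exist} there exists an algebraic wedge $\alpha$ strictly realising this inclusion, so $\alpha_0\in\dot{N}_F$ and $\alpha_s\in\dot{N}_E$ for small $s\neq 0$. Choose local coordinates $(x,y)$ on $(\CC^2,O)$ in which a curvetta of $E$ has tangent line $\{y=0\}$; since $Cont(E,F)=1$, the tangent line of any curvetta of $F$ at $O$, and hence that of $\alpha_0$ at $O$, differs from $\{y=0\}$. Writing $\alpha(t,s)=(x(t,s),y(t,s))$, the transversality of $\alpha_s$ to $E$ forces $\ord_t x(t,s)=\beta_0$ and $\ord_t y(t,s)=\beta_1$, while the constraint on the tangent of $\alpha_0$ forces the coefficient $a_{\beta_0}(s)$ of $t^{\beta_0}$ in $x$ to vanish at $s=0$.

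The core step invokes Lemma~\ref{lem:puis} and Corollary~\ref{cor:puis}: after possibly the ramified substitution $s\mapsto s^{\beta_0}$ (which does not alter the realization of the Nash-adjacency), one can write $\alpha(t,s)=\bigl(u(t,s)^{\beta_0},P(u(t,s),s)\bigr)$, where $P(u,s)=\sum_{i=\beta_1}^{\beta_k-1}c_iu^i+\lambda(s)u^{\beta_k}+\sum_{i>\beta_k}c_i(s)u^i$ is the Puiseux expansion of the generic arc and $u(t,s)=ts^R w(s)\bigl(1+\sum_{i\geq 1}b_i(s)t^i\bigr)$ with $w(0)\neq 0$ and $R\geq 1$ (the last inequality coming from $a_{\beta_0}(0)=0$). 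I would then compute $\alpha_0$: because $u(t,0)\equiv 0$, one has $x(t,0)\equiv 0$, so the image of $\alpha_0$ is contained in the $y$-axis. For the component $y(t,s)=P(u(t,s),s)$, no term $c_iu^i$ with $i>j$ contributes to the coefficient of $t^j$; in particular that coefficient vanishes identically for $j<\beta_1$, and for $j=\beta_1$ only $c_{\beta_1}u^{\beta_1}$ contributes, yielding $c_{\beta_1}s^{\beta_1 R}w(s)^{\beta_1}$, which vanishes at $s=0$. Hence $\ord_t y(t,0)\geq \beta_1+1$ whenever $y(t,0)\not\equiv 0$.

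To conclude, observe that if $y(t,0)\equiv 0$ then $\alpha_0$ is the constant arc at $O$, which is not in any $\dot{N}_F$. Otherwise $\alpha_0(t)=(0,y(t,0))$ is a non-reduced parametrization of the $y$-axis, and since $\ord_t y(t,0)\geq 2$ the derivative $\alpha_0'(0)\in T_O\CC^2$ vanishes. In the chart $(w=x/y,y)$ of the blow-up of $O$, the strict transform of $\alpha_0$ reads again $(0,y(t,0))$, and the same analysis applied to any further blow-up of the point where the lifting of $\alpha_0$ concentrates shows that this form, and hence the vanishing of the derivative, persists. Consequently the lifting of $\alpha_0$ to the minimal model of any prime divisor it meets does so with intersection multiplicity $\ord_t y(t,0)\geq 2$, i.e.\ never transversely, contradicting $\alpha_0\in\dot{N}_F$. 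The main subtlety of this plan lies in the second step: namely, checking that the ramified substitution $s\mapsto s^{\beta_0}$ yields a wedge with the same adjacency properties, and that Corollary~\ref{cor:puis} indeed gives $R\geq 1$ from $a_{\beta_0}(0)=0$ together with the stated Puiseux form.
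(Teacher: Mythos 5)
Your overall strategy is correct and it invokes the right machinery (Theorem~\ref{theo:wedge_exist}, Lemma~\ref{lem:puis}, Corollary~\ref{cor:puis}), but there is a genuine gap in the ``core step'' that makes the argument as written invalid.

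The problem is the claim that $u(t,0)\equiv 0$, hence $x(t,0)\equiv 0$. You derive this from $R\geq 1$ in the expression $u(t,s)=ts^R w(s)\bigl(1+\sum_{i\geq 1}b_i(s)t^i\bigr)$, apparently treating the $b_i(s)$ as if they were holomorphic. But Corollary~\ref{cor:puis} only gives that the $b_i(s)$ are \emph{meromorphic} in $s$, and they typically do have poles at $s=0$: the coefficient $b_1(s)$, for example, is essentially $a_{\beta_0+1}(s)/(\beta_0 a_{\beta_0}(s))$ with $\operatorname{ord}_s a_{\beta_0}=R\beta_0$, so it may have a pole of order up to $R\beta_0$. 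Consequently the coefficient of $t^2$ in $u$ is $s^R w(s)b_1(s)$, whose order in $s$ can be negative, so ``setting $s=0$'' in $u$ is not defined, and neither is the conclusion $\alpha_1(t,0)=u(t,0)^{\beta_0}\equiv 0$. In fact the whole point of the paper's proof is that nothing forces $\alpha_1(t,0)$ to vanish identically --- one only knows $\operatorname{ord}_t\alpha_1(t,0)\geq\beta_1+1$ --- and the contradiction must come from comparing pole orders, not from collapsing $\alpha_0$ onto the $y$-axis.

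The paper's argument goes the other way around and avoids your gap. After using Theorem~\ref{theo:Nash_top} to reduce to a normalized combinatorial representative of the pair $(E,F)$, it examines two specific coefficients. The coefficient of $t^{\beta_1+1}$ in $\alpha_2(t,s)$ is $\beta_1 s^{R\beta_1}w(s)^{\beta_1}b_1(s)$ (plus terms that manifestly vanish at $s=0$); since $\alpha_0\in\dot N_F$ and $m_O(h_F)=\beta_1+1$, this coefficient must have a nonzero finite limit at $s=0$, forcing $b_1(s)$ to have a pole of order exactly $R\beta_1$. On the other hand, the coefficient of $t^{\beta_0+1}$ in $\alpha_1(t,s)$ is $\beta_0 s^{R\beta_0}w(s)^{\beta_0}b_1(s)$, and holomorphy of the wedge bounds the pole of $b_1$ by $R\beta_0$. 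As $\beta_1>\beta_0$, these two requirements are incompatible. Note also that the reduction via Theorem~\ref{theo:Nash_top} is not merely cosmetic: it is what justifies working with a single fixed Newton--Puiseux expansion for the $E$-curvettas, since the combinatorial invariance of Nash adjacency is exactly what lets one replace $(E,F)$ by a convenient combinatorially equivalent pair. If you keep your setup, you should make explicit that it is the pole-order clash, not a vanishing of $x(t,0)$, that produces the contradiction.
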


\begin{proof} 
Assume we have $\overline{N}_F\contneq\overline{N}_E$. Using Theorem \ref{theo:Nash_top}, 
we know that $\overline{N}_{F'}\contneq\overline{N}_{E'}$ for any $(E',F')\equiv (E,F)$. 
Observe that the characteristic exponents and the multiplicities of curves associated to $E$ 
and $F$ do not change if we replace $(E,F)$ by a
combinatorially equivalent pair $(E',F')$. Therefore, in order to prove the statement we can assume that the 
tangent lines to the curves associated with $F$ and $E$ are respectively $x=0$ and $y=0$ and 
that the curves associated with $E$ have Newton-Puiseux expansion:  
  $$(u^{\beta_0}, u^{\beta_1}+ \cdots  +u^{\beta_{k-1}}+\lambda u^{\beta_k})$$ for some 
$\lambda\in \CC$. Then, if $\overline{N}_F\contneq\overline{N}_{E'}$, then there exists a (holomorphic) wedge 
$\alpha(s,t)$ realising this Nash-adjacency. By the previous corollary, 
the wedge $\alpha$ can be obtained by a change of variables $u=u(t,s)$ with  
$u(t,s)=ts^Rv(s)(1+\sum c_i(s)t^i)$ for some 
$R>0$, $v(s)\in \CC[[s]]$ with $v(0)\neq 0$ and $c_i(s)\in \CC((s))$, in:  
$$\beta(u,s)=(\beta_1(u,s),\beta_2(u,s))=(u^{\beta_0}, u^{\beta_1}+ \cdots +u^{\beta_{k-1}}+
    \lambda(s) u^{\beta_k}+ \cdots )$$  for some $\lambda(s)\in \CC((s))$. 

In particular, this means that $\alpha(t,s)=(\alpha_1(t,s),\alpha_2(t,s)):=\beta(u(t,s),s)$ is 
a holomorphic function and that $\alpha_2(t,0)$ equals $\mu t^{m(E)}+ \cdots$ 
for some $\mu\in \CC^*$.

Now, on the one hand, the coefficient of $t^{\beta_1+1}$ in $\alpha_2(t,s)$ is: 
$$\beta_1s^{R\beta_1}v(s)^{\beta_1}c_1(s).$$ 
We know that this expression does not have either poles or zeros at $s=0$ (recall that $m_O(h_F)$ 
equals $\beta_1+1$). This only happens when $c_1(s)$ 
has exactly a pole of order $R\beta_1$ at $s=0$. 

On the other hand, the coefficient of $t^{\beta_0+1}$ in $\alpha_1(t,s)$ is 
$\beta_0s^{R\beta_0}v(s)^{\beta_0}c_1(s)$. We want it to be holomorphic, so $c_1$ 
cannot have a pole at $s=0$ of order greater that $R\beta_0$. But this is not compatible 
with the previous requirement. We can conclude that such a wedge does not exist.
\end{proof}



\end{document}